\documentclass[letterpaper]{article}
\usepackage{arxiv}
\usepackage[T1]{fontenc}
\usepackage[utf8]{inputenc}
\usepackage[english]{babel}
\usepackage[authoryear,round]{natbib}
\setcitestyle{semicolon,aysep={},yysep={;}}
\usepackage{caption}
\usepackage{amsmath,amssymb,amsthm,mathtools,bm,mathrsfs}
\usepackage{booktabs,array,tabularx}
\usepackage{graphicx}
\usepackage{adjustbox,etoolbox}
\BeforeBeginEnvironment{tabular}{\begin{adjustbox}{max width=\linewidth}}
\AfterEndEnvironment{tabular}{\end{adjustbox}}
\usepackage{algorithm,algpseudocode}
\usepackage{enumitem,placeins}
\usepackage{xurl}
\usepackage[unicode,hidelinks]{hyperref}
\usepackage{microtype}
\newcommand{\objectiveboundcontinuation}{}
\newcommand{\nonlinearcouplingcontinuation}{}
\newcommand{\couplingboundcontinuation}{}
\newcommand{\workboundcontinuation}{}
\newcommand{\liftedformulationequations}{%
  \begin{alignat}{2}
    \phi(z)&=h(z)-\frac14\norm{z}^2,
    &\qquad r(z,u)&=\frac14\norm{z}^2+\phi^*(u),\label{eq:main-lift}\\[6pt]
    \lambda_k&=\kappa_k-\frac12,
    &\qquad D&=\diag\!\left(\frac12 I,\Lambda^{-1}\right).\label{eq:main-D}
  \end{alignat}
}
\newcommand{\samplingparametersreference}{\eqref{eq:main-sampling}}
\newcommand{\couplinginputmetricequation}{%
  \begin{equation}\label{eq:main-input-metric}
    D_{H,i}^x=\diag\!\left(\frac{P_{I_{H,i}^x,J_{H,i}^x}}{4\Delta_{H,i}^x},0\right),\qquad
    D_{H,j}^y=\diag\!\left(\frac{P_{I_{H,j}^y,J_{H,j}^y}}{4\Delta_{H,j}^y},0\right).
  \end{equation}
}
\newcommand{\samplingparametersequations}{%
  \begin{equation}\label{eq:main-sampling}
    S=\sum_k c_k,\qquad p_k=\frac{c_k}{S},\qquad
    \eta=\frac{1}{8S},\qquad K=\lceil18S\rceil.
  \end{equation}
}
\newcommand{\beginmainratestable}{\begin{table}[!htbp]}
\newenvironment{mainratestabular}{%
  \normalsize\everymath{\displaystyle}%
  \renewcommand{\arraystretch}{1.6}%
  \begin{tabular}{@{}l@{\hspace{3em}}l@{}}%
}{\end{tabular}}
\newcommand{\mainratesrowspace}{\addlinespace[5pt]}
\newenvironment{maincomponentstabular}{%
  \normalsize\renewcommand{\arraystretch}{1.5}%
  \begin{tabular}{@{}l@{\hspace{2.5em}}l@{}}%
}{\end{tabular}}
\newcommand{\maincomponentsgroupseparator}{\midrule}
\newcommand{\maincomponentsheader}{\textbf{Component} $k$ & $\boldsymbol{G_k(\theta)}$\\}
\newenvironment{mainmatchingtabular}{%
  \normalsize\everymath{\displaystyle}%
  \renewcommand{\arraystretch}{1.6}%
  \begin{tabular}{@{}l@{\hspace{2.5em}}l@{\hspace{2.5em}}l@{}}%
}{\end{tabular}}
\newenvironment{mainconvextabular}{%
  \normalsize\everymath{\displaystyle}%
  \renewcommand{\arraystretch}{1.6}%
  \begin{tabular}{@{}l@{\hspace{3em}}l@{}}%
}{\end{tabular}}
\hypersetup{
 pdftitle={Complexity of Multiblock Convex–Concave Saddle-Point Problems with General Smooth Coupling},
 pdfauthor={Nhat Trung Nguyen, Ivan Dorofeev, Irina Podlipnova, Yuriy Dorn, Alexander Gasnikov},pdfsubject={Research preprint},pdflang={en}}
\newtheorem{theorem}{Theorem}[section]
\newtheorem{lemma}[theorem]{Lemma}

\newtheorem{corollary}[theorem]{Corollary}
\theoremstyle{definition}
\newtheorem{assumption}[theorem]{Assumption}
\newtheorem{definition}[theorem]{Definition}
\newtheorem{example}[theorem]{Example}
\theoremstyle{remark}

\newcommand{\RR}{\mathbb R}
\newcommand{\E}{\mathbb E}
\newcommand{\cI}{\mathcal I}
\newcommand{\cJ}{\mathcal J}
\newcommand{\cL}{\mathcal L}

\newcommand{\ip}[2]{\left\langle #1,#2\right\rangle}
\newcommand{\norm}[1]{\left\|#1\right\|}
\newcommand{\Gap}{\operatorname{Gap}}
\newcommand{\op}{\mathrm{op}}
\newcommand{\out}{\mathrm{out}}
\newcommand{\inpt}{\mathrm{in}}
\DeclareMathOperator{\diag}{diag}
\DeclareMathOperator{\Lip}{Lip}
\DeclareMathOperator{\spanop}{span}
\numberwithin{equation}{section}

\title{Complexity of Multiblock Convex--Concave Saddle-Point
Problems with General Smooth Coupling}
\author{
  Nhat Trung Nguyen \\
  MIRIAI\\
  \texttt{nguyen.n@miriai.org} \\
  \And
 Ivan Dorofeev \\
 Independent Researcher \\
 \texttt{qft98romanov@gmail.com} \\
 \AND
 Irina Podlipnova \\
 MIRIAI \\
 \texttt{podlipnova.i@miriai.org} \\
 \And
 Yuriy Dorn \\
 MSU Institute for Artificial Intelligence \\
 \texttt{dornyv@my.msu.ru} \\
 \AND
 Alexander Gasnikov \\
  Innopolis University, MIRIAI, IITP \\
  \texttt{gasnikov@yandex.ru} \\
}

\date{}
\renewcommand{\shorttitle}{Multiblock Convex--Concave Saddle-Point Problems}

\begin{document}
\maketitle
\raggedbottom
\setlength{\parskip}{3.5pt}
\begin{abstract}
\widowpenalty=10000
We study multiblock convex--concave saddle-point problems of the form
$\min_x\max_y\{f(x)+H(x,y)-g(y)\}$,
with separately charged partial-gradient oracles for all three objective
components. We impose joint smoothness and strong convexity bounds
on the functions $f$ and $g$, together with blockwise Lipschitz conditions
on the partial gradients of $H$, so that the resulting oracle complexities
reflect the smoothness and dependency parameters specific to each oracle.
Under strong convexity, we develop a randomized lifted extragradient method
that achieves linear convergence while providing simultaneous complexity
guarantees for all oracles. The partial-gradient complexities for $f$ and $g$
exhibit the expected square-root dependence on the corresponding block
condition numbers, whereas the coupling-gradient complexities depend on
the associated curvature-normalized smoothness parameters.
Finite-dimensional bilinear subclasses provide lower-complexity benchmarks
within a randomized blockwise linear-span model. When strong convexity
of $f$ and $g$ is not assumed, a regularization argument yields expected
restricted-gap guarantees with accuracy dependence $\varepsilon^{-1/2}$
for the objective-gradient oracles and $\varepsilon^{-1}$ for the
coupling-gradient oracles, up to logarithmic factors. Experiments show observed oracle
savings and runtime improvements in regimes of poor objective conditioning.
\end{abstract}

\providecommand{\samplingparametersreference}{\eqref{eq:main-sampling}--\eqref{eq:main-parameters}}
\section{Introduction}\label{sec:intro}
Learning tasks linked by a graph, inverse problems with several sensor
groups, and models with shared score perturbations all combine local
objectives with interactions between variable blocks. Evaluating a local
data loss can be much more expensive than evaluating a regularizer or an
interaction, and different interactions can involve different subsets of
the variables. These differences suggest that an optimization method
should provide a separate evaluation budget for each partial gradient.
We consider smooth convex--concave saddle-point problems of the form
\begin{equation}\label{eq:problem-intro}
 \min_{x\in\RR^{d_x}}\max_{y\in\RR^{d_y}}
\{\cL(x,y)=f(x)+H(x,y)-g(y)\}.
\end{equation}
We write $x=(x_1,\ldots,x_m)$ and $y=(y_1,\ldots,y_q)$ for the
primal and dual block partitions. The coupling $H$ is smooth, convex
in $x$ for each fixed $y$, and concave in $y$ for each fixed $x$.
We allow nonseparable objectives $f$ and $g$, as well as nonlinear
coupling $H$ with interactions across all blocks.

The partition specifies our oracle access: each requested block of an
objective or coupling gradient is evaluated and charged separately.
In applications, primal blocks may group model parameters by learning
task or demand variables by origin, while dual blocks may correspond
to graph edges, sensor groups, or score shifts. Objective gradients can
differ in conditioning and evaluation cost, while coupling gradients can
differ in smoothness and in the input blocks on which they depend.
Our goal is to capture these differences in simultaneous complexity
guarantees for every partial-gradient oracle. The following examples
show how these block structures arise and why separate evaluation
budgets matter for the resulting optimization problems.

\paragraph{Graph-regularized learning.}
Primal blocks contain task parameters, and dual blocks represent graph
edges penalizing disagreement between neighboring tasks. Objective
partials evaluate local data losses, whose cost and conditioning vary
across tasks; coupling partials use endpoint parameters or incident edge
variables. Separate counts capture these cost differences and sparse
input dependencies. Equation~\eqref{eq:experiment-graph} formulates our
benchmark; see \citet{MultitaskGraphs} for background.

\paragraph{Inverse problems with grouped observations.}
In traffic-demand estimation, primal blocks group demand corrections by
origin, and dual blocks group link-count sensors. Submatrices of the
routing matrix determine interaction strengths and dependencies, while
cross-block prior terms allow a nonseparable objective. This motivates
separate bounds for heterogeneous objective and coupling partials.
Equation~\eqref{eq:experiment-traffic} gives our fixed-routing model
with simulated counts; see \citet{ODPrimalDual} for related models.

\paragraph{Nonlinear interactions through group score shifts.}
Dual coordinates shift logistic scores within individual tasks or across
several tasks, creating nonlinear interactions with primal parameters.
A quadratic correction ensures coupling concavity. Local and shared
shifts produce different input dependencies, motivating overlap-aware
bounds beyond bilinear coupling. Equation~\eqref{eq:experiment-nonlinear-coupling}
defines their interaction; Appendix~\ref{supp:sec:experiment-models}
provides global certificates.

These examples lead to the same question: can one run solve the coupled
problem while assigning each objective and coupling partial gradient a
complexity bound determined by its own conditioning, smoothness, and
input dependencies? A bound on full-gradient evaluations alone does not
describe how often an expensive task loss or a weak interaction must be
queried. We address this question in a general oracle model that allows
both nonseparable objectives and nonlinear coupling.

\paragraph{Related work.}
Previous separation guarantees distinguish full objective and coupling
gradients; we seek individual bounds for every partial-gradient oracle
with nonseparable objectives and nonlinear coupling.
Complexity separation for nonlinear saddle-point problems was established
by \citet{Alkousa2019} and \citet{Borodich2023}. The latter distinguish
full objective gradients from the full coupling gradient, with strong
convexity--concavity supplied by the coupling. In the bilinear setting,
\citet[Corollary~2]{borodich2025linear} separate full objective gradients
and coupling products. Our one-block specialization recovers their rates for strongly convex
$f$ and $g$ when no positive lower singular-value bounds on the coupling
are used. Their result also covers objectives without strong convexity
when the coupling supplies the required curvature, beyond our
unregularized theorem. Mirror-prox
sliding on compact variational-inequality domains separates gradient and
operator accuracy powers $\varepsilon^{-1/2}$ and $\varepsilon^{-1}$
\citep[Corollary~4]{LanOuyang2021}. We extend this separation to individual objective and coupling
partial gradients.

\paragraph{Contributions.}
Our contributions are as follows.
\begin{enumerate}[leftmargin=*,itemsep=3pt,topsep=3pt]
\item \textbf{A method using only partial gradients.}
We develop a randomized lifted extragradient method for nonlinear
convex--concave coupling and nonseparable objectives. Its gradient-coordinate
implementation uses only the four partial-gradient oracle families,
without conjugate evaluations, proximal maps, inverse-gradient operations,
or inner solves.

\item \textbf{Simultaneous bounds for every oracle.}
For smooth strongly convex $f$ and $g$, one run achieves expected
evaluation bounds of
$O(\sqrt{\kappa}\,\log(2\kappa_{\max}/\varepsilon))$ for each objective
oracle and
$O((1+\kappa_H\sqrt{\Delta})\log(2\kappa_{\max}/\varepsilon))$
for each coupling oracle (Table~\ref{tab:main-rates}). Here $\kappa$
is the objective block's condition number~(\ref*{eq:main-kappa}),
$\kappa_H$ is the coupling partial's smoothness normalized by objective
curvature~(\ref*{eq:main-coupling-row-parameters}), and $\Delta$ is its
input-overlap parameter~(\ref*{eq:main-overlap}). Thus each bound reflects
the corresponding oracle's conditioning or coupling smoothness and
dependencies across blocks.

\item \textbf{Separate rates for merely convex objectives.}
Quadratic regularization yields an expected restricted saddle gap of at
most $\varepsilon$, with objective and coupling oracle complexities of
order $\varepsilon^{-1/2}$ and $\varepsilon^{-1}$, respectively, up to
logarithmic factors (Table~\ref{tab:convex-rates}).

\item \textbf{Matching lower bounds for bilinear subclasses.}
For bilinear couplings on disjoint block pairs, we establish matching
lower bounds in the randomized blockwise linear-span model under the
stated parameter conditions. These bounds establish optimality within
those subclasses (Corollary~\ref{supp:cor:optimality} and
Table~\ref{tab:matching-rates}).
\end{enumerate}

\paragraph{Technical approach.}
A Fenchel-conjugate reformulation transfers objective curvature into a
Bregman regularizer and yields block-supported operator components even
for nonseparable objectives. Input-supported seminorms capture coupling
dependencies, while a randomized correction controls errors from cached
evaluations. The approach
draws on bilinear lifting \citep{Thekumparampil2022}, variance reduction
\citep{AlacaogluMalitsky2022}, operator decomposition \citep{Lee2026},
and square-root coordinate sampling \citep{AllenZhu2016}; related
partial-gradient separation and lower-bound techniques appear in
\citet{Kovalev2025} and \citet{OuyangXu2018}.

\section{Problem data and oracle access}\label{sec:model}
\subsection{What a call returns}
Let $x=(x_1,\ldots,x_m)$, $y=(y_1,\ldots,y_q)$, and $z=(x,y)$,
with positive finite block dimensions.
The block partition, decomposition into $(f,H,g)$, and objective and
coupling bounds defined below are known. This known decomposition does
not provide analytic formulas for the functions; access is through
Table~\ref{tab:main-oracles}. We call partials of $f$ and $g$ objective
gradients.
Their queries supply a full $x$ or $y$;
coupling queries supply $(x,y)$. Each call returns one requested block
and is charged to its own counter.

\begin{table}[t]
\caption{The four exact partial-gradient oracle families. Each requested
block is one charged call. All counters refer to one saddle-point target.}
\label{tab:main-oracles}
\centering
\begin{tabular}{@{}lll@{}}
\toprule
Returned gradient & Evaluation point & Counter\\
\midrule
$\nabla_{x_i}f(x)$ & $x\in\RR^{d_x}$ & $N_i^f$\\
$\nabla_{y_j}g(y)$ & $y\in\RR^{d_y}$ & $N_j^g$\\
$\nabla_{x_i}H(x,y)$ & $(x,y)$ & $N_{H,i}^x$\\
$\nabla_{y_j}H(x,y)$ & $(x,y)$ & $N_{H,j}^y$\\
\bottomrule
\end{tabular}
\end{table}

Known rescalings, vector arithmetic, and sampling are free. A full gradient of $f$ or $g$ costs all its partial evaluations. No function-value, conjugate,
proximal, inverse-gradient, Jacobian, or inner-solver oracle is supplied.
Assembling the signed coupling operator
$T_H=(\nabla_xH,-\nabla_yH)$ costs $m+q$ coupling partial calls.
Oracle counts measure evaluation work; storage is recorded separately.

\subsection{Joint objective curvature and accuracy}
Norms without subscripts are Euclidean; $I$ denotes the identity.
For symmetric $A\succeq0$, $\norm{v}_A^2=\ip{Av}{v}$ is a squared
seminorm; $A\preceq B$ means $B-A\succeq0$.
Unmarked logarithms are natural.
Write $h(z)=f(x)+g(y)$ and
$D_\psi(v,w)=\psi(v)-\psi(w)-\ip{\nabla\psi(w)}{v-w}$.
\providecommand{\objectiveboundcontinuation}{\\ &}
\begin{assumption}[Joint objective curvature]\label{ass:players}
The objectives $f,g$ are differentiable on their full Euclidean spaces.
Known constants satisfy $0<\mu_i^f\le L_i^f<\infty$ for every $i$
and $0<\mu_j^g\le L_j^g<\infty$ for every $j$.
For all $x,x^\prime\in\RR^{d_x}$ and $y,y^\prime\in\RR^{d_y}$,
\begin{equation}\label{eq:main-model-f}
 \begin{aligned}
 \tfrac12\sum_{i=1}^m\mu_i^f\norm{x_i-x_i^\prime}^2
 &\le D_f(x,x^\prime)\objectiveboundcontinuation
 \le\tfrac12\sum_{i=1}^m L_i^f\norm{x_i-x_i^\prime}^2,
 \end{aligned}
\end{equation}
\begin{equation}\label{eq:main-model-g}
 \begin{aligned}
 \tfrac12\sum_{j=1}^q\mu_j^g\norm{y_j-y_j^\prime}^2
 &\le D_g(y,y^\prime)\objectiveboundcontinuation
 \le\tfrac12\sum_{j=1}^q L_j^g\norm{y_j-y_j^\prime}^2.
 \end{aligned}
\end{equation}
\end{assumption}
These joint bounds allow nonseparable objectives and must hold when
several blocks change simultaneously. Bounds obtained by varying only
one block at a time do not suffice.
Write $M_x=\diag(\mu_i^f I)$, $L_x=\diag(L_i^f I)$,
$M_y=\diag(\mu_j^g I)$, and $L_y=\diag(L_j^g I)$.
Put $M=\diag(M_x,M_y)$. Define
\begin{equation}\label{eq:main-kappa}
 \begin{gathered}
 \kappa_i^f=L_i^f/\mu_i^f,\qquad \kappa_j^g=L_j^g/\mu_j^g,\\
 \kappa_{\max}=\max\{\max_i\kappa_i^f,\max_j\kappa_j^g\}.
 \end{gathered}
\end{equation}
In coordinates $\widehat z=M^{1/2}z$, lower curvature is the identity.
For initial point $z^0$, saddle point $z^\star$, output $z^{\out}$,
and $0<\varepsilon\le1/2$, the target is
\begin{equation}\label{eq:main-target}
 \E\norm{z^{\out}-z^\star}_M^2
     \le\varepsilon\norm{z^0-z^\star}_M^2.
\end{equation}
Expectations are over the method's randomness. Neither $z^\star$ nor the initial distance is an
algorithm input. Normalized evaluations use one original evaluation and
known input and output rescalings.

\subsection{Coupling rows and shared inputs}
\begin{assumption}[Smooth convex--concave coupling]\label{ass:coupling}
The function $H$ is continuously differentiable, convex in the entire
vector $x$ for each $y$, and concave in the entire vector $y$ for each $x$.
A saddle point exists. Known finite nonnegative arrays
$L^{H,xx},L^{H,xy},L^{H,yx},L^{H,yy}$ satisfy, for every
$z=(x,y)$ and $z^\prime=(x^\prime,y^\prime)$,
\providecommand{\couplingboundcontinuation}{\\ &\quad}
\begin{equation}\label{eq:main-coupling-model}
\begin{aligned}
 \norm{\nabla_{x_i}H(z)-\nabla_{x_i}H(z^\prime)}
 &\le\sum_t L_{it}^{H,xx}\norm{x_t-x_t^\prime}\couplingboundcontinuation
 +\sum_\ell L_{i\ell}^{H,xy}\norm{y_\ell-y_\ell^\prime},\\
 \norm{\nabla_{y_j}H(z)-\nabla_{y_j}H(z^\prime)}
 &\le\sum_t L_{jt}^{H,yx}\norm{x_t-x_t^\prime}\couplingboundcontinuation
 +\sum_\ell L_{j\ell}^{H,yy}\norm{y_\ell-y_\ell^\prime},
\end{aligned}
\end{equation}
for every $i,j$, with $1\le t\le m$ and $1\le\ell\le q$ in the sums.
\end{assumption}
The first array index is the output block; the second is the input block.
These global bounds require no Hessian.
Define the normalized coupling parameters $\kappa_{H,i}^x$ and
$\kappa_{H,j}^y$ by
\begin{equation}\label{eq:main-coupling-row-parameters}
 \begin{aligned}
 \kappa_{H,i}^x
 &:=\left[\sum_{t=1}^m\frac{(L_{it}^{H,xx})^2}{\mu_i^f\mu_t^f}
       +\sum_{\ell=1}^q\frac{(L_{i\ell}^{H,xy})^2}{\mu_i^f\mu_\ell^g}\right]^{1/2},\\
 \kappa_{H,j}^y
 &:=\left[\sum_{t=1}^m\frac{(L_{jt}^{H,yx})^2}{\mu_j^g\mu_t^f}
       +\sum_{\ell=1}^q\frac{(L_{j\ell}^{H,yy})^2}{\mu_j^g\mu_\ell^g}\right]^{1/2}.
 \end{aligned}
\end{equation}
These certified bounds normalize both input and output curvature;
the coupling parameters can be below one. The quantity $\kappa_{\max}$
in~\eqref{eq:main-kappa} uses only objective parameters.

For $\nabla_{x_i}H$, let $I_{H,i}^x$ and $J_{H,i}^x$ denote the
index sets of $x$- and $y$-blocks with positive Lipschitz coefficients
in~\eqref{eq:main-coupling-model}; define $I_{H,j}^y$ and $J_{H,j}^y$
analogously for $\nabla_{y_j}H$. These sets specify the block dependency
structure of the coupling partial gradients. Let $\nu_t^x$ and $\nu_\ell^y$
be the numbers of coupling partial gradients whose dependency sets
include $x_t$ and $y_\ell$, respectively. The block overlap parameter
$\Delta_{H,i}^x$ is the maximum of these counts over the dependency
sets of $\nabla_{x_i}H$, bounded below by one:
\begin{equation}\label{eq:main-overlap}
 \Delta_{H,i}^x=\max\left\{1,
 \max_{t\in I_{H,i}^x}\nu_t^x,\ \max_{\ell\in J_{H,i}^x}\nu_\ell^y\right\},
\end{equation}
with the analogous definition for $\Delta_{H,j}^y$; an empty inner
maximum is zero, so an empty support has $\Delta=1$. These certified supports and overlaps may overestimate
actual dependence; they are independent of block dimensions.
Write $P_{I,J}$ for the projection retaining those input blocks.
Appendix~\ref{supp:sec:model} proves the local row and full Lipschitz bounds.
Convex--concavity makes $T_H$ monotone; positive joint lower curvature
makes $\nabla h+T_H$ strongly monotone, so its zero is the unique
saddle point. Empty rows are constant derivatives; the method treats
them by the same sampling and cache updates as every other row.
Example~\ref{supp:ex:dense} gives a globally smooth nonlinear coupling
with overlap $m+q$ that cannot generally be split into pairwise block terms.

\section{Simultaneous oracle guarantees}\label{sec:main}
The key distinction is between objective conditioning, coupling
smoothness, and input overlap. Table~\ref{tab:main-rates} separates their
effects; the logarithmic factor $\log(2\kappa_{\max}/\varepsilon)$
is common to every row.

\begin{theorem}[Separate counts for every partial gradient]\label{thm:strong}
Under Assumptions~\ref{ass:players} and~\ref{ass:coupling}, the method in
Section~\ref{sec:algorithm} attains~\eqref{eq:main-target} and all bounds
in Table~\ref{tab:main-rates} in the same run. These are expected counts
with universal hidden constants, including initialization and corrections.
The method uses
\begin{equation}\label{eq:main-epochs}
 E=\left\lceil\log_2\frac{2\kappa_{\max}}{\varepsilon}\right\rceil
 \quad\text{stages}.
\end{equation}
Coupling rows with zero declared Lipschitz bounds use $O(E)$ calls;
their partial gradients may be nonzero constants.
Appendix~\ref{supp:sec:main} gives the numerical constants.
\end{theorem}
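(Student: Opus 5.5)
The plan is to analyze the method of Section~\ref{sec:algorithm} through the lifted formulation \eqref{eq:main-lift}--\eqref{eq:main-D}. We write $\phi(z)=h(z)-\frac14\norm{z}^2$ in $M$-normalized coordinates $\widehat z=M^{1/2}z$. There lower curvature is the identity, so $\phi$ is convex and $\nabla\phi$ is blockwise $(\kappa_k-\frac12)$-smooth. Fenchel duality then turns the saddle problem into a monotone inclusion in $(z,u)$. The objective curvature appears as the Bregman term $\phi^*(u)$, and the operator splits into block-supported components $G_k(\theta)$. These are the objective blocks, which involve only $u_k$ and $z_k$, and the coupling rows, which are supported on their input sets $I,J$. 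The saddle point of \eqref{eq:problem-intro} corresponds to the zero of the lifted operator, and $\norm{z-z^\star}_M^2$ is dominated by the lifted Bregman/quadratic potential built from $D$. A gradient-coordinate implementation is possible because $\nabla\phi^*$ is never needed explicitly. The dual variable is kept as $u=\nabla\phi(w)$ for a maintained primal point $w$, so each $u_k$-update only requires the partial gradient $\nabla_{x_i}f$ or $\nabla_{y_j}g$ at that point.

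Next we prove a one-stage contraction for the randomized extragradient with importance sampling \eqref{eq:main-sampling}. Each component $k$ gets a Lipschitz constant $c_k$ in the metric $D$, and component $k$ is sampled with $p_k=c_k/S$. The step size is $\eta=1/(8S)$ and the stage runs $K=\lceil18S\rceil$ steps, using a variance-reduced (Alacaoglu--Malitsky type) correction with a cached anchor. For objective blocks we take $c_k\asymp\sqrt{\kappa_k}$. This is the square-root sampling of Allen-Zhu: the scaling $\Lambda^{-1}$ in $D$ makes the block's effective smoothness times its strong monotonicity give $\sqrt{\kappa}$. For coupling rows we use the input metric \eqref{eq:main-input-metric}, which shows that row $(H,i)$ is Lipschitz with constant about $\kappa_{H,i}^x\sqrt{\Delta_{H,i}^x}$. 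This uses Cauchy--Schwarz on \eqref{eq:main-coupling-model} normalized by $\mu$'s, together with the fact that $\sum_{\text{rows}}D_{H,\cdot}\preceq \frac14 I$. That last inequality holds because each input block appears in at most $\Delta$ rows, which is exactly why the factor $1/(4\Delta)$ appears. The standard variance-reduced extragradient analysis then gives, for each stage, a halving of the expected potential: $\E\Phi_{s+1}\le\frac12\Phi_s$.

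For the stage count, the initial potential is at most $\kappa_{\max}$ times $\norm{z^0-z^\star}_M^2$, because of the $\phi^*$ Bregman term and $L\preceq\kappa_{\max}M$. Running $E$ stages as in \eqref{eq:main-epochs} therefore yields \eqref{eq:main-target}. The expected number of calls per stage for component $k$ is $Kp_k+O(1)$, where the $O(1)$ comes from anchor refreshes and corrections. This equals $O(c_k+1)$ because $Kp_k=\lceil18S\rceil c_k/S$. Multiplying by $E$ gives the rows of Table~\ref{tab:main-rates}. A coupling row with zero declared constants has $c_k=0$, so it costs only the $O(E)$ anchor calls, and a constant gradient stays exact in the cache.

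\textbf{Main obstacle.} The delicate step is the coupling Lipschitz estimate in the lifted metric. A coupling row evaluated at cached or extrapolated points depends on all the blocks in its input set. We must show that the sum of the row errors is controlled by the single metric $D$ with only the overlap loss $\sqrt\Delta$, and not $\sqrt{m+q}$. The first thing to try is to bound the weighted sum $\sum_k\frac1{p_k}\norm{G_k(\theta)-G_k(\theta')}^2_{D^{-1}}$ row by row using the input seminorms. After that, we would exchange the order of summation and use $\nu_t\le\Delta$ to recover $\norm{\theta-\theta'}_D^2$ with the stated constant.
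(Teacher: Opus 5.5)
\textbf{The gap is the one-stage halving.} It carries the whole theorem, and you attribute it to ``the standard variance-reduced extragradient analysis''. That is not an off-the-shelf result, and the route you indicate does not deliver it.

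In Algorithm~\ref{alg:epoch} each component $k$ has its own stale anchor $v_{k,t}$, refreshed only when $i_t=k$. The correction energy is therefore $\eta^2\sum_kc_k^2p_k^{-1}D_{D_k}(\bar w_t,v_{k,t})=\frac1{64}\sum_kp_kD_{D_k}(\bar w_t,v_{k,t})$, with a different anchor in every term. Your plan of exchanging the order of summation to recover a single $\norm{\theta-\theta'}_D^2$ presupposes one common snapshot.

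A common snapshot does not work either. Refreshed $O(1)$ times per stage, the Alacaoglu--Malitsky rule $\eta\lesssim\sqrt{\pi}/\bar L$ applies with refresh probability $\pi\approx1/S$ and mean-square constant $\bar L\gtrsim\sqrt{Sc_{\max}}$ for $p_k=c_k/S$. This forces $\eta\approx1/(S\sqrt{c_{\max}})$, so $K=\lceil18S\rceil$ iterations do not halve the error. Refreshing often enough to allow $\eta\approx1/S$ costs every component, including cheap well-conditioned ones, about $c_{\max}$ calls per stage, which destroys Table~\ref{tab:main-rates}.

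With per-component anchors, absorbing the variance requires anchor memories $\sum_kD_{D_k}(w^\star,v_{k,t})$ with constant weight in the potential. Each refresh then transfers $\sum_kp_kD_{D_k}(w^\star,w_t)$ into that potential. Because $\cJ$ is only monotone (the skew objective blocks are not co-coercive, and coupling rows need not be monotone), the usual SAGA compensation is unavailable. The progress term $\eta D_r(w^\star,\bar w_t)$ with $\eta=1/(8S)$ cannot pay for this transfer when some $p_k$ is of order one.

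\textbf{The missing idea is the local-memory correction in the predictor.} Subtracting $d_t/8=\frac18D_{i_t}(w_t-v_{i_t,t})$ makes the exact three-point identity hold for $\mathcal A_i(v)=D_r(v,w)-\frac18D_{D_i}(v,w)+\frac18D_{D_i}(v,v_i)$. Four consequences follow:
\begin{enumerate}
\item A Young-type bound on $\E_i\mathcal A_i(\bar w)$ supplies $\frac1{16}\sum_kp_kD_{D_k}(w,v_k)$, which absorbs the stale-anchor variance.
\item $\E_i\mathcal A_i(w^\star)$ cancels the anchor transfer exactly in $\Phi_t=D_r(w^\star,w_t)+\frac18\sum_kD_{D_k}(w^\star,v_{k,t})$.
\item Monotonicity of the sum alone gives the drift $D_r(w^\star,\bar w)+D_r(\bar w,w^\star)$, and $\sum_kD_k\preceq D$ gives $\Phi_0\le\frac98D_r(w^\star,w_0)$.
\item Summing over the stage with a uniformly drawn output index gives the factor $9/(8\eta K)\le\frac12$ (Lemma~\ref{supp:lem:descent} and Theorem~\ref{supp:thm:epoch}).
\end{enumerate}

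By contrast, the coupling estimate you call the main obstacle is routine. It is the per-component bound~\eqref{eq:main-component-bound} via Cauchy--Schwarz on a normalized row, plus the packing $\nu/(4\nu)\le\frac14$ behind~\eqref{eq:main-allocation}, with no exchange of summation.

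\textbf{Smaller points.}
\begin{itemize}
\item The method sets $c_k=\max\{1,\cdot\}$, so zero rows are sampled with $p_k=1/S$ rather than having $c_k=0$.
\item Both $i_t$ and $j_t$ are evaluated each iteration, so the per-stage count is $1+2p_kK$.
\item Obtaining exactly $E$ in~\eqref{eq:main-epochs} requires starting at $a=z^0$ and retaining both coordinates between stages. It also requires the exact constants in $D_r(w^\star,w(\theta))=\frac14\norm{z-z^\star}^2+D_\phi(a,z^\star)$ and $D_h(z^0,z^\star)\le\frac{\kappa_{\max}}2\norm{z^0-z^\star}^2$, in normalized coordinates.
\end{itemize}
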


\ifdefined\mainratestabular\else
\newenvironment{mainratestabular}{%
  \small\renewcommand{\arraystretch}{1.25}%
  \begin{tabular*}{\columnwidth}{@{}l@{\extracolsep{\fill}}l@{}}%
}{\end{tabular*}}
\fi
\providecommand{\mainratesrowspace}{}
\providecommand{\beginmainratestable}{\begin{table}[H]}
\beginmainratestable
\caption{Expected calls to each oracle for the relative squared-distance
target~\eqref{eq:main-target}. All four bounds hold in the same run.}
\label{tab:main-rates}
\centering
\begin{mainratestabular}
\toprule
\textbf{Oracle} & \textbf{Expected calls}\\
\midrule
$\nabla_{x_i}f$ & $O\!\left(\sqrt{\kappa_i^f}\,\log\frac{2\kappa_{\max}}{\varepsilon}\right)$\\
\mainratesrowspace
$\nabla_{y_j}g$ & $O\!\left(\sqrt{\kappa_j^g}\,\log\frac{2\kappa_{\max}}{\varepsilon}\right)$\\
\midrule
$\nabla_{x_i}H$ & $O\!\left(\left(1+\kappa_{H,i}^x\sqrt{\Delta_{H,i}^x}\right)\log\frac{2\kappa_{\max}}{\varepsilon}\right)$\\
\mainratesrowspace
$\nabla_{y_j}H$ & $O\!\left(\left(1+\kappa_{H,j}^y\sqrt{\Delta_{H,j}^y}\right)\log\frac{2\kappa_{\max}}{\varepsilon}\right)$\\
\bottomrule
\end{mainratestabular}
\end{table}

\paragraph{Dense and sparse inputs.}
Full input dependence gives $\Delta_{H,i}^x=\Delta_{H,j}^y=m+q$.
For a matching $\mathcal E$ of primal--dual pairs (no block appears twice),
let $H=\sum_{(i,j)\in\mathcal E}\ip{y_j}{B_{ji}x_i}$, where
$B_{ji}:\RR^{\dim(x_i)}\to\RR^{\dim(y_j)}$ is linear.
Each nonzero derivative uses only its opposite endpoint, so $\Delta=1$.
Disjoint nonlinear pair interactions have $\Delta\le2$.
Bounded input overlap therefore removes the growing block-count factor
from these coupling coefficients, including for nonlinear interactions.
Appendix~\ref{supp:sec:coupling-blocks} gives examples and the assembly comparison.

\subsection{Sampling versus full assembly}
For positive costs $\tau_i^f,\tau_j^g,\tau_{H,i}^x,\tau_{H,j}^y$,
define
\providecommand{\workboundcontinuation}{\\ &}
\begin{equation}\label{eq:main-work}
 \begin{split}
 W_\tau={}&\sum_i\tau_i^fN_i^f+\sum_j\tau_j^gN_j^g\workboundcontinuation
 +\sum_i\tau_{H,i}^xN_{H,i}^x+\sum_j\tau_{H,j}^yN_{H,j}^y.
 \end{split}
\end{equation}
Multiplying Theorem~\ref{thm:strong}'s counts by their costs and adding
gives a weighted work bound; no independence among counters is needed.
The method need not know these costs.

\paragraph{Full assembly.}
The same oracles can assemble $T_H$ as one coupling component.
Each assembly costs $m+q$ partial calls; all its coupling counters are
equal. Sampling can favor small row constants and sparse supports;
assembly can favor dense inputs. These are alternative runs: their
componentwise minimum is not guaranteed in one run.
Appendix~\ref{supp:sec:coupling-blocks} gives the global coupling bound,
the assembly guarantee, and a weighted certificate criterion.

\paragraph{A concrete cost comparison.}
For $M=I$ and $H=100x_1y_1+x_2y_2$, strong-edge partials cost $1$
and weak-edge partials cost $100$. All overlaps equal one. Sampling
reduces calls to the weak, expensive derivatives; full assembly calls
all four equally often.

\section{The method: lifting, input seminorms, and sampling}\label{sec:algorithm}
We first normalize by objective curvature, then lift the problem to
transfer that curvature into a regularizer. We allocate input metrics
to the lifted components and sample them in an extragradient update.
Work in $M^{1/2}$ coordinates, suppressing hats, with known rescalings
around each original oracle call.
The component set
$\cI$ contains one objective component associated with $f$ or $g$ per
variable block and one coupling component per row, including zero bounds. For an
objective component $k=(f,i)$ or $(g,j)$, let
$z_k,\nabla_kh,\kappa_k$ denote the corresponding block, partial
gradient, and condition number from~\eqref{eq:main-kappa}.
Let $U_i^x$ and $U_j^y$ insert one block into a full $z$ vector,
with zeros elsewhere; their transposes extract that block.
For objective components use $U_{(f,i)}=U_i^x$ and $U_{(g,j)}=U_j^y$.

\subsection{Lifted formulation and gradient coordinates}
Conjugate lifting moves objective curvature into $r$, making components
local in $w=(z,u)$ even for nonseparable $h$. Queries still receive
full $a^x$ or $a^y$. Set
\ifdefined\liftedformulationequations
\liftedformulationequations
\else
\begin{align}
 \phi(z)&=h(z)-\tfrac14\norm{z}^2,
 &r(z,u)&=\tfrac14\norm{z}^2+\phi^*(u),\label{eq:main-lift}\\
 \lambda_k&=\kappa_k-\tfrac12,
 &D&=\diag(\tfrac12I,\Lambda^{-1}),\label{eq:main-D}
\end{align}
\fi
Here $\phi^*(u)=\sup_v\{\ip{u}{v}-\phi(v)\}$ is the convex conjugate,
and $\Lambda=\diag(\lambda_k I)$ over the objective components.
The joint model implies $D_r(v,w)\ge\norm{v-w}_D^2/2$.
The lifted operator
\begin{equation}\label{eq:main-Jlift}
 \cJ(z,u)=(T_H(z)+u,-z)
\end{equation}
is monotone: its cross terms cancel, leaving the pairing for $T_H$.
The lifted solution $w^\star=(z^\star,\nabla\phi(z^\star))$ satisfies
$\nabla r(w^\star)+\cJ(w^\star)=0$.

The gradient coordinate and corresponding lifted point are
\[
 \theta=(z/2,a)=\nabla r(w),\qquad
 w(\theta)=(z,\nabla\phi(a)).
\]
Here $a=\nabla\phi^*(u)$. We store $z,a$; $u=\nabla\phi(a)$ is
implicit, and the conjugate serves only the analysis.
Table~\ref{tab:main-components} defines the split through
$G_k=\cJ_k\circ w$, equivalently $\cJ_k=G_k\circ\nabla r$.
Objective components sum to $(u,-z)$; coupling components sum to
$(T_H(z),0)$.

\ifdefined\maincomponentstabular\else
\newenvironment{maincomponentstabular}{%
  \footnotesize\renewcommand{\arraystretch}{1.3}%
  \tabularx{\columnwidth}{@{}l X@{}}%
}{\endtabularx}
\fi
\providecommand{\maincomponentsgroupseparator}{}
\providecommand{\maincomponentsheader}{Component $k$ & $G_k(\theta)$\\}
\begin{table}[H]
\caption{Normalized component values at $\theta=(z/2,a)$; one original partial-gradient call per row, with known rescalings.}\label{tab:main-components}
\centering
\begin{maincomponentstabular}
\toprule
\maincomponentsheader
\midrule
$(f,i)$ & $\bigl(U_i^x[\nabla_{x_i}f(a^x)-a_i^x/2],-U_i^xx_i\bigr)$\\
$(g,j)$ & $\bigl(U_j^y[\nabla_{y_j}g(a^y)-a_j^y/2],-U_j^yy_j\bigr)$\\
\maincomponentsgroupseparator
$(H,x,i)$ & $\bigl(U_i^x\nabla_{x_i}H(z),0\bigr)$\\
$(H,y,j)$ & $\bigl(-U_j^y\nabla_{y_j}H(z),0\bigr)$\\
\bottomrule
\end{maincomponentstabular}
\end{table}

All rows use the same rules, without separately detecting constant
partial gradients, and
$\cJ(w(\theta))=\sum_{k\in\cI}G_k(\theta)$.

\subsection{Input seminorms and sampling}
For $H=xy$, the difference between $\nabla_xH$ at $(x,y)$ and a
cached value evaluated at $(x',y')$ is $y-y'$. This error is measured on the $y$
input, although the derivative is returned in the $x$ block.
Choose input metrics $D_k\succeq0$ and constants $c_k>0$
to satisfy both a shared budget and a component variation bound:
\begin{align}
 \sum_{k\in\cI}D_k&\preceq D,\label{eq:main-allocation}\\
 \norm{\cJ_k(w)-\cJ_k(v)}_{D^{-1}}^2
 &\le c_k^2\norm{w-v}_{D_k}^2.\label{eq:main-component-bound}
\end{align}
The seminorms measure input changes in the lifted coordinates $w=(z,u)$;
the variation bound holds for all $w,v$.

For objective component $k$, let $\Pi_k$ orthogonally project onto block
$k$ in both lifted coordinates and set $D_k=\Pi_kD\Pi_k/2$.
For coupling components, set
\ifdefined\couplinginputmetricequation
\couplinginputmetricequation
\else
\begin{equation}\label{eq:main-input-metric}
 \begin{aligned}
 D_{H,i}^x&=\diag\left(\frac{P_{I_{H,i}^x,J_{H,i}^x}}{4\Delta_{H,i}^x},0\right),\\
 D_{H,j}^y&=\diag\left(\frac{P_{I_{H,j}^y,J_{H,j}^y}}{4\Delta_{H,j}^y},0\right).
 \end{aligned}
\end{equation}
\fi
Empty supports give $D_k=0$. Objective metrics sum to $D/2$.
An input in $\nu>0$ coupling supports has $\Delta\ge\nu$ for each
incident row, so their total weight is at most $\nu/(4\nu)=1/4$.
Coupling metrics thus use at most half of $D$'s $z$-coordinate metric
and none of its $u$-coordinate metric, proving~\eqref{eq:main-allocation}.

For these metrics, the component variation bounds hold with
\[
 c_k=\begin{cases}
 \sqrt{4\kappa_i^f-2},&k=(f,i),\\
 \sqrt{4\kappa_j^g-2},&k=(g,j),\\
 \max\{1,2\sqrt2\kappa_{H,i}^x\sqrt{\Delta_{H,i}^x}\},&k=(H,x,i),\\
 \max\{1,2\sqrt2\kappa_{H,j}^y\sqrt{\Delta_{H,j}^y}\},&k=(H,y,j).
 \end{cases}
\]
The floor ensures positive sampling weights.
Appendix~\ref{supp:sec:algorithm} verifies the bounds. Choose
\ifdefined\samplingparametersequations
\samplingparametersequations
\else
\begin{align}
 S&=\sum_k c_k,&p_k&=c_k/S,\label{eq:main-sampling}\\
\eta&=1/(8S),&K&=\lceil18S\rceil.\label{eq:main-parameters}
\end{align}
\fi
With $K=O(S)$ and $c_k\ge1$, expected per-stage counts are
$O(1+p_kK)=O(c_k)$. Section~\ref{sec:analysis} establishes stage
contraction and the number of stages needed for Theorem~\ref{thm:strong}.

\subsection{Variance-reduced stochastic extragradient}
Store component evaluations
$g_k^t=G_k(\theta_{s_k(t)})$ and auxiliary vectors
$r_k^t=R_k(\theta_{s_k(t)})$, where $s_k(t)$ is the last stored
iteration for component $k$, initially zero.
Define $R_k(\theta):=D_kw(\theta)$ by
\begin{equation}\label{eq:main-memory-difference}
 R_k(\theta)=\begin{cases}
 (U_kz_k/4,U_ku_k/(2\lambda_k)),&\text{objective},\\
 (P_{I_k,J_k}z/(4\Delta_k),0),&\text{coupling}.
 \end{cases}
\end{equation}
Here $U_k$ embeds block $k$. Objective components use
$u_k=\nabla_kh(a)-a_k/2$; coupling components use input support
$(I_k,J_k)$ and overlap bound $\Delta_k$, with $R_k=0$ for empty
supports. Evaluating $(G_k,R_k)$ uses one partial-gradient call
and does not require the full $u$. Each stage initializes every
table entry with one counted evaluation.

The stored sum $g^t=\sum_kg_k^t$ serves as a control variate.
Each iteration draws $i_t,j_t$ independently from $p$.
Sample $i_t$ supplies the memory correction
$d_t=R_{i_t}(\theta_t)-r_{i_t}^t$ and the table update.
Sample $j_t$ supplies the importance-sampled estimator
$\widehat g_t$ at the predictor $\bar\theta_t$.
Conditional on the current iterate, table, and $i_t$, it is unbiased
for $\sum_kG_k(\bar\theta_t)$. Each fresh evaluation is reused below.

\begin{algorithm}[H]
\caption{Variance-reduced extragradient stage}\label{alg:epoch}
\begin{algorithmic}[1]\footnotesize
\Require $\theta^{\inpt}$; parameters~\samplingparametersreference.
\State $\theta_0\gets\theta^{\inpt}$; $\tau\sim\operatorname{Unif}\{0,\ldots,K-1\}$.
\State $(g_k^0,r_k^0)\gets(G_k(\theta_0),R_k(\theta_0))$ \quad$(k\in\cI)$.
\State $g^0\gets\sum_{k\in\cI}g_k^0$.
\For{$t=0,\ldots,K-1$}
 \State $i_t,j_t\overset{\mathrm{iid}}{\sim}p$.
 \State Evaluate $G_{i_t}(\theta_t)$ and $R_{i_t}(\theta_t)$.
 \State $d_t\gets R_{i_t}(\theta_t)-r_{i_t}^t$.
 \State $\displaystyle\bar\theta_t\gets\frac{\theta_t-d_t/8-\eta g^t}{1+\eta}$. \Comment{Predictor step}
 \State \textbf{if} $t=\tau$ \textbf{then} $\theta^{\out}\gets\bar\theta_t$.
 \State $\displaystyle\widehat g_t\gets g^t+\frac{G_{j_t}(\bar\theta_t)-g_{j_t}^t}{p_{j_t}}$.
 \State $\theta_{t+1}\gets\bar\theta_t-\eta(\widehat g_t-g^t)$. \Comment{Corrector step}
 \State $g^{t+1}\gets g^t-g_{i_t}^t+G_{i_t}(\theta_t)$.
 \State $(g_{i_t}^{t+1},r_{i_t}^{t+1})\gets(G_{i_t}(\theta_t),R_{i_t}(\theta_t))$.
 \State $(g_k^{t+1},r_k^{t+1})\gets(g_k^t,r_k^t)$ \quad$(k\ne i_t)$.
\EndFor
\State \Return $\theta^{\out}$.
\end{algorithmic}
\end{algorithm}

Both steps use the old table. After correction, update only entry
$i_t$ using its evaluation at $\theta_t$. Each iteration counts two
partial-gradient calls, even if $i_t=j_t$ or the query points coincide.
Complete $K$ iterations and return a uniformly selected predictor iterate.
Appendix~\ref{supp:sec:records} gives implementation details.

Initialize $\theta^{(0)}=(\widehat z^0/2,\widehat z^0)$ and, for
$e=0,\ldots,E-1$ with $E=\lceil\log_2(2\kappa_{\max}/\varepsilon)\rceil$,
set $\theta^{(e+1)}=\operatorname{Stage}(\theta^{(e)})$.
Each stage restarts at the previous output, retaining both coordinates
$\theta=(z/2,a)$ to preserve the Bregman divergence
in~\eqref{supp:eq:physical-divergence}.
Return $z^{\out}=M^{-1/2}(2\theta_z^{(E)})$.
Zero initialization preserves the blockwise span restriction.

\subsection{Convergence and separate evaluation bounds}\label{sec:analysis}
The local-memory correction cancels cache transfers, while importance
sampling controls correction variance, giving the following guarantee.

\begin{lemma}[Stage contraction and component evaluation counts]\label{lem:descent}
Under Assumptions~\ref{ass:players} and~\ref{ass:coupling}, use
geometry~\eqref{eq:main-lift}--\eqref{eq:main-D} and
parameters~\samplingparametersreference.
Algorithm~\ref{alg:epoch} satisfies
\begin{equation}\label{eq:main-epoch}
 \E[D_r(w^\star,w(\theta^{\out}))\mid\theta^{\inpt}]
 \le\tfrac12D_r(w^\star,w(\theta^{\inpt})).
\end{equation}
For $E$ stages, let $N_k$ count every evaluation of component $k$. Then
\begin{equation}\label{eq:main-count}
 \E N_k=E(1+2p_kK)\le(3+36c_k)E.
\end{equation}
\end{lemma}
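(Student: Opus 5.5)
The plan is to prove the count bound first, since it is bookkeeping, and then the contraction, which is a Lyapunov argument in the mirror (gradient) coordinates $\theta=\nabla r(w)$.

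\emph{Counting.} Each stage spends $|\cI|$ evaluations on initialization, one per component, so component $k$ receives one. In each of the $K$ iterations, component $k$ is evaluated when $i_t=k$ and when $j_t=k$, each with probability $p_k$. This gives $\E N_k=E(1+2p_kK)$ exactly. Next I bound $p_kK$. We have $K=\lceil 18S\rceil\le18S+1$, so $p_kK\le 18c_k+c_k/S$. Since $c_k\le S$, this is at most $18c_k+1$, which gives $1+2p_kK\le 3+36c_k$.

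\emph{Contraction.} I view the predictor step as a mirror step. The identity $\theta=\nabla r(w)$ turns $\theta\mapsto(\theta-\eta v)/(1+\eta)$ into the optimality condition of
\[
\min_w\;\eta\ip{v}{w}+\eta r(w)+D_r(w,w_t),
\]
a Bregman proximal step on $r+\ip{v}{\cdot}$. Strong convexity of $r$ relative to itself then yields the factor $(1+\eta)$ on $D_r(w^\star,\cdot)$.

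For a fixed iteration I use the three-point identity for the predictor and the corrector with $u=w^\star$. Adding the two, and using $\nabla r(w^\star)+\cJ(w^\star)=0$ together with monotonicity of $\cJ$, gives
\[
(1+\eta)\E D_r(w^\star,\bar w_t)+\E D_r(w^\star,w_{t+1})\le D_r(w^\star,w_t)+\E\bigl[\text{error}\bigr]-\text{(extragradient slack)}.
\]
The error has two parts: the variance of $\widehat g_t$, and the mismatch between $g^t$ and $\sum_kG_k(\theta_t)$. The memory correction $d_t/8=(R_{i_t}(\theta_t)-r^t_{i_t})/8$ is designed so that, conditional on the past, its expectation $\sum_kp_kD_k(w_t-w_{s_k})/8$ plays the role of a metric pull toward cached points.

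Using the component variation bound \eqref{eq:main-component-bound}, I bound the error as follows:
\begin{align*}
\E\norm{\widehat g_t-g^t}_{D^{-1}}^2
&\le\sum_k\frac{c_k^2}{p_k}\norm{\bar w_t-w_{s_k}}_{D_k}^2\\
&=S\sum_kc_k\norm{\bar w_t-w_{s_k}}_{D_k}^2 .
\end{align*}
This is the point where the choice $p_k\propto c_k$ is used. The strong-convexity bound $D_r(v,w)\ge\norm{v-w}_D^2/2$ together with \eqref{eq:main-allocation} controls the distances.

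I would then introduce the potential
\[
\Phi_t=D_r(w^\star,w_t)+\beta\sum_k\frac{1}{p_k}\norm{w_t-w_{s_k}}_{D_k}^2 ,
\]
where the $1/p_k$ weight accounts for component $k$ being refreshed with probability $p_k$. The goal is to show $(1+\eta)\E D_r(w^\star,\bar w_t)$ plus the next potential is at most $\Phi_t$. Summing over $t$ and averaging, with $\tau$ uniform, gives
\[
\E D_r(w^\star,\bar w_\tau)\le\frac{\Phi_0}{\eta K(1+\eta)}.
\]
Since $\Phi_0=D_r(w^\star,w_0)$ and $\eta K\ge18/8>2$, the factor is at most $1/2$.

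\textbf{Main obstacle.} The hard step is closing the potential inequality. It requires the constants $\eta=1/(8S)$, the $1/8$ correction factor, and the halved allocations $D_k$ to absorb every cross term between the cache drift and the extragradient slack $\norm{\bar w_t-w_t}^2_D$ and $\norm{w_{t+1}-\bar w_t}_D^2$. I would first apply Young's inequality with weights tuned to $S$. If the constants do not close, I would route all drift terms through $\sum_kD_k\preceq D$ and choose $\beta$ of order $\eta S$.
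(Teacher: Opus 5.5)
Your counting argument is correct and matches the paper's ($1+2p_kK\le 1+36c_k+2p_k\le 3+36c_k$). The contraction part, however, has a genuine gap. The one-step descent inequality, which is the whole content of the lemma, is left as the ``main obstacle,'' and the steps you do state would not produce it.

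Your combined predictor--corrector bound, with $(1+\eta)\E D_r(w^\star,\bar w_t)+\E D_r(w^\star,w_{t+1})$ on the left, is false. At $t=0$ the table is fresh and $d_0=0$, so as $\eta\to0$ every error term vanishes while the left side tends to $2D_r(w^\star,w_0)$. The factor $1+\eta$ is valid for the predictor alone. Once the corrector is expanded at $w^\star$, however, monotonicity of $\cJ$ and $\nabla r(w^\star)+\cJ(w^\star)=0$ give only the drift $\eta[D_r(w^\star,\bar w_t)+D_r(\bar w_t,w^\star)]$. The target must therefore be $\eta\,\E D_r(w^\star,\bar w_t)+\E\Phi_{t+1}\le\Phi_t$, which is what your final bound $\Phi_0/(\eta K)$ actually needs. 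Your stated goal would instead telescope to $\Phi_0/(K(1+\eta))$.

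More importantly, your potential $\beta\sum_kp_k^{-1}\norm{w_t-w_{s_k}}_{D_k}^2$ has the wrong form for the memory correction. Write $D_W(v,w)=\norm{v-w}_W^2/2$ and $v_k$ for the anchor of component $k$. Because $d_t/8=\tfrac18D_{i_t}(w_t-v_{i_t})$ enters the predictor without importance weighting, expanding $D_r(w^\star,w_{t+1})$ and splitting off $w_t-w_{t+1}$ produces $\tfrac18\ip{D_{i_t}(w_t-v_{i_t})}{w^\star-w_t}$. Its expectation is $\tfrac18\sum_kp_k[D_{D_k}(w^\star,v_k)-D_{D_k}(w^\star,w_t)-D_{D_k}(w_t,v_k)]$, which contains a positive anchor-to-solution term.

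Current-to-anchor distances can reach that term only through a Young split, and the split leaves $\tfrac{\alpha}{8}\sum_kp_kD_{D_k}(w^\star,w_t)$. On a stiff block with $p_k$ close to one, this is not dominated by the descent of order $\eta=1/(8S)$ unless $\alpha\lesssim\eta/\max_kp_k$. But then the complementary current-to-anchor term is inflated by a factor of order $1/\eta$, and nothing in your sketch pays for it.

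The missing idea is the paper's potential $\Phi_t=D_r(w^\star,w_t)+\tfrac18\sum_kD_{D_k}(w^\star,v_{k,t})$. It carries no $1/p_k$, because the correction fires and the anchor is refreshed with the same probability $p_k$. It is paired with the exact three-point identity for $\mathcal A_i(v)=D_r(v,w)-\tfrac18D_{D_i}(v,w)+\tfrac18D_{D_i}(v,v_i)$, whose gradient is $\nabla r(v)-\nabla r(w)+d/8$. The argument then runs as follows.
\begin{enumerate}
\item $\E_i\mathcal A_i(w^\star)$ and the expected anchor update cancel exactly.
\item The stale table enters only through $D_r(\bar w,w^+)\le\tfrac{\eta^2}{2}\norm{\widehat g-g}_{D^{-1}}^2$, by conjugate smoothness and $\nabla r(\bar w)-\nabla r(w^+)=\eta(\widehat g-g)$. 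Its expectation over $j_t$ is at most $\tfrac1{64}\sum_kp_kD_{D_k}(\bar w,v_k)$.
\item Because $\bar w$ depends on $i_t$, this bound is moved back to $w_t$. It is then absorbed by $\E_i\mathcal A_i(\bar w)\ge\tfrac34\E_iD_r(\bar w,w)+\tfrac1{16}\sum_kp_kD_{D_k}(w,v_k)$.
\end{enumerate}
The price is $\Phi_0\le\tfrac98D_r(w^\star,w_0)$ instead of $D_r(w^\star,w_0)$, and $9/(8\eta K)\le\tfrac12$ finishes the stage bound.
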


Iterating the contraction and converting lifted divergence to distance
yields Theorem~\ref{thm:strong}, with each $N_k$ counting its original
oracle. Appendix~\ref{supp:sec:analysis} gives the proof and divergence
comparisons;~\eqref{supp:eq:coupling-block-memory} gives storage bounds.

\section{Lower bounds and the matching regimes}\label{sec:lower}
Algorithms start at zero, query and output each block within the span of
gradients previously returned to that block, and stop almost surely.
Coefficients and stopping decisions may depend on past observations and
randomness (Definition~\ref{supp:def:span}). The problem class satisfies
Assumptions~\ref{ass:players} and~\ref{ass:coupling}, with the same fixed
objective bounds and coupling arrays as the upper bounds; these need not
be attained (Definition~\ref{supp:def:strong-class}).
For a matching $\mathcal E$ of primal--dual pairs,
define the admissible chain strength
\begin{equation}\label{eq:main-edge-strength}
 c_{ij}=\frac{\min\{L_{ij}^{H,xy},L_{ji}^{H,yx}\}}
                 {\sqrt{\mu_i^f\mu_j^g}}.
\end{equation}

Theorem~\ref{supp:thm:strong-lower} gives a cost-weighted lower bound
on one quadratic bilinear instance, together with separate oracle lower
bounds. The instance and finite block dimensions may depend on accuracy
and positive costs, but not on the realized random seed.

\paragraph{Matching regime and scope.}
Suppose $\kappa_i^f,\kappa_j^g\ge4$ for all objective blocks,
$c_{ij}\ge2$ on a nonempty matching $\mathcal E$, and
$\varepsilon\le\min\{1/128,1/\kappa_{\max}\}$. If the prescribed
array contains only symmetric cross entries on $\mathcal E$ and all
other entries are zero, then $c_{ij}=\kappa_{H,i}^x=\kappa_{H,j}^y$ and
$\Delta_{H,i}^x=\Delta_{H,j}^y=1$ on every matched pair.
Corollary~\ref{supp:cor:optimality} yields Table~\ref{tab:matching-rates}.

\ifdefined\mainmatchingtabular\else
\newenvironment{mainmatchingtabular}{%
  \small\renewcommand{\arraystretch}{1.25}%
  \begin{tabular*}{\columnwidth}{@{}l@{\extracolsep{\fill}}ll@{}}%
}{\end{tabular*}}
\fi
\begin{table}[!htbp]
\caption{Matching expected oracle counts for~\eqref{eq:main-target}
in the bilinear regime of Section~\ref{sec:lower}; see
Corollary~\ref{supp:cor:optimality}.
Coupling bounds shown apply to matched pairs.}
\label{tab:matching-rates}
\centering
\begin{mainmatchingtabular}
\toprule
\textbf{Oracle} & \textbf{Upper bound} & \textbf{Lower bound}\\
\midrule
$\nabla_{x_i}f$ & $O\bigl(\sqrt{\kappa_i^f}\log(1/\varepsilon)\bigr)$ & $\Omega\bigl(\sqrt{\kappa_i^f}\log(1/\varepsilon)\bigr)$\\
\mainratesrowspace
$\nabla_{y_j}g$ & $O\bigl(\sqrt{\kappa_j^g}\log(1/\varepsilon)\bigr)$ & $\Omega\bigl(\sqrt{\kappa_j^g}\log(1/\varepsilon)\bigr)$\\
\midrule
$\nabla_{x_i}H$ & $O\bigl(\kappa_{H,i}^x\log(1/\varepsilon)\bigr)$ & $\Omega\bigl(\kappa_{H,i}^x\log(1/\varepsilon)\bigr)$\\
\mainratesrowspace
$\nabla_{y_j}H$ & $O\bigl(\kappa_{H,j}^y\log(1/\varepsilon)\bigr)$ & $\Omega\bigl(\kappa_{H,j}^y\log(1/\varepsilon)\bigr)$\\
\bottomrule
\end{mainmatchingtabular}
\end{table}

Individual lower bounds may use distinct randomized blockwise span-model
instances; both coupling endpoint bounds hold on one instance per pair.
Weighted work matches up to $O(E)$ calls per unmatched coupling row;
general overlap optimality is not claimed. Appendix~\ref{supp:sec:lower}
gives the full theorem, construction, and aggregate one-edge bound.

\section{Convex--concave problems}\label{sec:convex}
Quadratic regularization extends Section~\ref{sec:algorithm} to convex
objectives and converts distance accuracy into a restricted-gap guarantee.

\begin{assumption}[Convex problem data]\label{ass:convex}
Differentiable convex $f$ and $g$ satisfy
\eqref{eq:main-model-f}--\eqref{eq:main-model-g} with lower matrices zero
and known finite positive upper-model weights; $H$ satisfies
Assumption~\ref{ass:coupling}. Compact product comparison sets
$Z=X\times Y$ contain a saddle point and satisfy
$\sup_{v\in Z}\norm{v-z^0}^2\le R^2$ for a supplied $R>0$.
All norms here are Euclidean; oracles are those of
Table~\ref{tab:main-oracles}.
\end{assumption}
The sets define the restricted gap without constraining the iterates:
\begin{equation}\label{eq:main-gap}
 \Gap(x,y)=\sup_{\eta\in Y}\cL(x,\eta)
         -\inf_{\xi\in X}\cL(\xi,y).
\end{equation}
The target is $\E\Gap(z^{\out})\le\varepsilon$, with expectation
outside the supremum.

The Euclidean row bounds $L_{H,i}^x,L_{H,j}^y$ are the expressions
in~\eqref{eq:main-coupling-row-parameters} with every curvature
denominator replaced by one. Supports and overlaps remain those of
Section~\ref{sec:model}. Put
$L_{\max}=\max\{\max_i L_i^f,\max_j L_j^g\}$ and let $L_F$ be the
certified Euclidean saddle-operator Lipschitz bound
in~\eqref{supp:eq:Lglobal}. The common logarithmic factor is
$\mathsf B_\varepsilon=\log(2+L_FR^2/\varepsilon)$.

\begin{theorem}[Restricted-gap guarantee]\label{thm:convex}
Under Assumption~\ref{ass:convex}, for every $\varepsilon>0$ the quadratic
regularization procedure below attains $\E\Gap(z^{\out})\le\varepsilon$.
If $\varepsilon/R^2$ is at most every objective bound $L_i^f,L_j^g$
and every coupling product $L_{H,i}^x\sqrt{\Delta_{H,i}^x}$,
$L_{H,j}^y\sqrt{\Delta_{H,j}^y}$ for positive rows, the same run obeys Table~\ref{tab:convex-rates}.
All original partial-gradient calls are counted. Coupling rows with zero
declared Lipschitz bounds use $O(\mathsf B_\varepsilon)$ calls. Appendix~\ref{supp:sec:convex}
gives numerical bounds valid for every $\varepsilon>0$.
\end{theorem}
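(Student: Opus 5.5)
We reduce Theorem~\ref{thm:convex} to Theorem~\ref{thm:strong} applied to a quadratically regularized problem. Fix a regularization weight $\gamma>0$, to be chosen proportional to $\varepsilon/R^2$, and define
\[
 f_\gamma(x)=f(x)+\tfrac{\gamma}{2}\norm{x-x^0}^2,\qquad
 g_\gamma(y)=g(y)+\tfrac{\gamma}{2}\norm{y-y^0}^2 .
\]
These objectives satisfy Assumption~\ref{ass:players} with $\mu_i^f=\mu_j^g=\gamma$ and upper weights $L_i^f+\gamma$, $L_j^g+\gamma$. A call to $\nabla_{x_i}f_\gamma$ costs one call to $\nabla_{x_i}f$ plus free arithmetic, so the oracle counters transfer unchanged. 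The coupling is not modified, so Assumption~\ref{ass:coupling} holds with the same arrays, supports and overlaps. The normalized parameters then become
\[
 \kappa_i^f=(L_i^f+\gamma)/\gamma,\qquad \kappa_{H,i}^x=L_{H,i}^x/\gamma ,
\]
and similarly on the dual side, because every curvature denominator in~\eqref{eq:main-coupling-row-parameters} equals $\gamma$. We run the method of Section~\ref{sec:algorithm} on $\cL_\gamma$ with a target accuracy $\delta$ to be chosen. By Theorem~\ref{thm:strong}, since $M=\gamma I$, this yields $\E\norm{z^{\out}-z_\gamma^\star}^2\le\delta\norm{z^0-z_\gamma^\star}^2$.

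The second step bounds the regularization bias and the distance to the initial point. We compare $z_\gamma^\star$ with a saddle point $z^\star\in Z$. Strong monotonicity of the regularized operator, tested against $z^\star$, gives $\gamma\norm{z_\gamma^\star-z^\star}^2\le\gamma\ip{z^0-z_\gamma^\star}{z_\gamma^\star-z^\star}$. This yields $\norm{z_\gamma^\star-z^0}\le\norm{z^\star-z^0}\le R$ by the standard proximal-point estimate.

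For the gap, we use $\cL\le\cL_\gamma+\frac{\gamma}{2}\norm{y-y^0}^2$ pointwise in $\eta$, and the analogous bound in $\xi$. Over $Z$ this gives $\Gap(z)\le\Gap_\gamma(z)+\gamma R^2$, where $\Gap_\gamma$ is the restricted gap of $\cL_\gamma$. Since $\cL_\gamma$ is smooth with the certified Lipschitz bound $L_F+\gamma$ from~\eqref{supp:eq:Lglobal}, the restricted gap of $\cL_\gamma$ at $z$ is at most $(L_F+\gamma)\norm{z-z_\gamma^\star}\cdot\sup_{v\in Z}\norm{v-z}$. Using $\sup_{v\in Z}\norm{v-z}\le 2R+\norm{z-z_\gamma^\star}$, we take expectations and apply Jensen with the squared-distance guarantee. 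This gives
\[
 \E\Gap\le\gamma R^2+O\bigl((L_F+\gamma)R^2(\sqrt\delta+\delta)\bigr).
\]
Choosing $\gamma=\varepsilon/(2R^2)$ and $\delta=c\,(\varepsilon/(L_FR^2))^2$ makes both terms at most $\varepsilon/2$. Then
\[
 \log(2\kappa_{\max}/\delta)=O\bigl(\log(L_{\max}R^2/\varepsilon)+\log(L_FR^2/\varepsilon)\bigr)=O(\mathsf B_\varepsilon),
\]
because $L_{\max}\lesssim L_F$ up to the certified bound.

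The main obstacle is bounding the gap rather than the distance, since the supremum sits inside the expectation. Our plan handles this by bounding $\Gap_\gamma(z)$ deterministically in terms of $\norm{z-z_\gamma^\star}$ before taking expectations. What remains to check is that $L_F$ really dominates the Lipschitz constant of the regularized operator restricted to $Z$-comparisons; we absorb the extra $\gamma\le L_F$ term using the stated hypothesis on $\varepsilon/R^2$.

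The final step reads off the counts. With $\gamma=\varepsilon/(2R^2)$, Table~\ref{tab:main-rates} gives objective counts
\[
 O\bigl(\sqrt{(L_i^f+\gamma)/\gamma}\,\mathsf B_\varepsilon\bigr)=O\bigl(\sqrt{L_i^fR^2/\varepsilon}\,\mathsf B_\varepsilon\bigr)
\]
when $\varepsilon/R^2\le L_i^f$. The coupling counts are
\[
 O\bigl((1+L_{H,i}^x\sqrt{\Delta_{H,i}^x}R^2/\varepsilon)\,\mathsf B_\varepsilon\bigr)=O\bigl(L_{H,i}^x\sqrt{\Delta_{H,i}^x}R^2\varepsilon^{-1}\mathsf B_\varepsilon\bigr)
\]
under the product hypothesis. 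Rows with zero declared bounds give $O(E)=O(\mathsf B_\varepsilon)$. Without the hypotheses, the same expressions with the additive $1$ retained give the numerical bounds valid for every $\varepsilon>0$.
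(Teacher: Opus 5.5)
Your plan is essentially the paper's proof. You regularize with weight proportional to $\varepsilon/R^2$, apply Theorem~\ref{thm:strong} to the regularized problem with $\kappa_\iota=1+L_\iota/\gamma$ and $\kappa_{H,i}^x=L_{H,i}^x/\gamma$, bound the gap deterministically in $\norm{z-z_\gamma^\star}$ before taking expectations, and read off the counts. Your gap transfer (first $\Gap\le\Gap_\gamma+\gamma R^2$, then $\norm{F_\gamma(z)}\le(L_F+\gamma)\norm{z-z_\gamma^\star}$ using $F_\gamma(z_\gamma^\star)=0$) is a harmless variant of Lemma~\ref{supp:lem:gap-transfer}. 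That lemma instead works with the unregularized $F$ and the identity $\ip{F(z_\sigma)}{z_\sigma-v}=-\sigma\ip{z_\sigma-z^0}{z_\sigma-v}\le\sigma\norm{v-z^0}^2/2$. Both give the same order.

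\textbf{The step that needs repair.} You absorb the extra $\gamma$ in $L_F+\gamma$ using the hypothesis on $\varepsilon/R^2$. That hypothesis belongs only to the count table; the theorem asserts $\E\Gap(z^{\out})\le\varepsilon$ for every $\varepsilon>0$. There is a second problem: Theorem~\ref{thm:strong} requires a tolerance at most $1/2$, and $\delta=c(\varepsilon/(L_FR^2))^2$ exceeds this when $\varepsilon\gg L_FR^2$. The fix is to put $L_F+\gamma$ into the tolerance, $\delta=c\,\varepsilon^2/((L_F+\gamma)R^2)^2$. This is exactly the paper's $\rho_\varepsilon=(64\chi_\sigma^2)^{-1}$ with $\chi_\sigma=1+L_F/\sigma$. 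Since $(L_F+\gamma)R^2\ge\gamma R^2=\varepsilon/2$, you then get, with no hypothesis:
\begin{itemize}
\item $\delta\le 4c\le 1/2$;
\item the $\sqrt\delta$ term equals $2\sqrt c\,\varepsilon$;
\item the $\delta$ term is at most $2c\,\varepsilon$;
\item $\log(2\kappa_{\max}/\delta)=O(\mathsf B_\varepsilon)$, because $L_{\max}\le L_F$ holds exactly.
\end{itemize}
Also, with the exact bias $\tfrac{\gamma}{2}\bigl(\sup_{X}\norm{\xi-x^0}^2+\sup_{Y}\norm{\eta-y^0}^2\bigr)\le\tfrac{\gamma}{2}R^2$ from the product structure, you can take $\gamma=\varepsilon/R^2$. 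That is the procedure the theorem actually refers to.

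\textbf{Two smaller slips.}
\begin{itemize}
\item Your displayed strong-monotonicity inequality should read $\norm{z_\gamma^\star-z^\star}^2\le\ip{z^0-z^\star}{z_\gamma^\star-z^\star}$. With $z^0-z_\gamma^\star$ in place of $z^0-z^\star$ it is false, for example when $F\equiv0$ and $z^\star\neq z^0$. The proximal estimate $\norm{z_\gamma^\star-z^0}\le R$ you conclude is still correct. It follows most directly from monotonicity of $F$, which gives $\ip{z_\gamma^\star-z^0}{z_\gamma^\star-z^\star}\le0$.
\item Applying Theorem~\ref{thm:strong} needs a saddle point of $\cL_\gamma$, which Assumption~\ref{ass:coupling} does not supply. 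It follows from the Lipschitz continuity and $\gamma$-strong monotonicity of $F_\gamma$ (Appendix~\ref{supp:app:existence}).
\end{itemize}
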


\ifdefined\mainconvextabular\else
\newenvironment{mainconvextabular}{%
  \small\renewcommand{\arraystretch}{1.4}%
  \tabularx{\columnwidth}{@{}p{0.27\columnwidth}X@{}}%
}{\endtabularx}
\fi
\begin{table}[!htbp]
\caption{Simultaneous expected oracle counts for
$\E\Gap(z^{\out})\le\varepsilon$ under Theorem~\ref{thm:convex}.
Coupling rows shown have positive declared Lipschitz bounds.}
\label{tab:convex-rates}
\centering
\begin{mainconvextabular}
\toprule
\textbf{Oracle} & \textbf{Expected calls}\\
\midrule
$\nabla_{x_i}f$ & $O\!\bigl(R\sqrt{L_i^f/\varepsilon}\,\mathsf B_\varepsilon\bigr)$\\
\addlinespace[4pt]
$\nabla_{y_j}g$ & $O\!\bigl(R\sqrt{L_j^g/\varepsilon}\,\mathsf B_\varepsilon\bigr)$\\
\midrule
$\nabla_{x_i}H$ & $O\!\bigl(L_{H,i}^x\sqrt{\Delta_{H,i}^x}\,\tfrac{R^2}{\varepsilon}\,\mathsf B_\varepsilon\bigr)$\\
\addlinespace[4pt]
$\nabla_{y_j}H$ & $O\!\bigl(L_{H,j}^y\sqrt{\Delta_{H,j}^y}\,\tfrac{R^2}{\varepsilon}\,\mathsf B_\varepsilon\bigr)$\\
\bottomrule
\end{mainconvextabular}
\end{table}

\paragraph{Regularization.}
Set $\sigma=\varepsilon/R^2$ and add
$\sigma\norm{x-x^0}^2/2-\sigma\norm{y-y^0}^2/2$
to the saddle objective. Run Section~\ref{sec:algorithm} for
$O(\mathsf B_\varepsilon)$ stages with the distance tolerance
in~\eqref{supp:eq:regparams}. Each objective partial uses one original
call plus an affine term; coupling oracles are unchanged. The wrapper
uses the comparison sets only through $R$ and requires no gap or
projection oracle. Appendix~\ref{supp:sec:convex} gives the gap-transfer
proof and full procedure.

\paragraph{Lower bounds.}
For bilinear couplings on disjoint block pairs with symmetric declared
cross entries and all other entries zero,
Theorem~\ref{supp:thm:convex-lower} matches the objective
$\varepsilon^{-1/2}$ and coupling $\varepsilon^{-1}$ rates and their
blockwise coefficients up to the upper logarithmic factor, under its
parameter conditions and blockwise linear-span restrictions.
Every objective block in the construction has zero strong convexity.

\section{Numerical experiments}\label{sec:experiments}
We test three problem families with six base configurations and four
loss-weight variants. Appendix~\ref{supp:sec:experiment-models} gives
data construction, parameter values, and global certificates.

\paragraph{Traffic-demand estimation.}\label{sec:experiment-models}
On Sioux Falls and Anaheim, $x\in\RR^{48}$ has four demand-correction
blocks and $y\in\RR^{32}$ four sensor blocks. Fixed routing gives a
matrix $C$ and simulated residual counts $b$. We solve
\begin{equation}\label{eq:experiment-traffic}
 \min_x\max_y
 \left\{\frac12x^\top Qx+y^\top(Cx-b)
                    -\frac{0.4}{2}\norm{y}^2\right\}.
\end{equation}
The prior $Q=D+0.04vv^\top$, with diagonal $D\succ0$ and
$v=\boldsymbol1_{48}/\sqrt{48}$, is nonseparable.
Positivity is not enforced; coupling overlap is $\Delta=4$.

\paragraph{Learning objectives.}
Four artificial partitions of a9a Adult data
\citep{LIBSVMData,AdultData} have sizes $(128,256,512,1024)$ and
sixteen features including an intercept. Both learning families use
$\ell(s;c)=\log(1+e^s)-cs$ for binary labels and
\begin{equation}\label{eq:experiment-local-objective}
 \begin{aligned}
 f(x)&=\sum_{i=1}^4 f_i(x_i),\\
 f_i(x_i)&=0.1\norm{x_i}^2+
 \frac{w_i}{n_i}\sum_{r=1}^{n_i}\ell((X_ix_i)_r;c_{ir}).
 \end{aligned}
\end{equation}
Weights give certified condition numbers $(2,4,8,20)$.
The graph-cycle and nonlinear-local sweeps change only $w_4$, raising
$\kappa_4^f$ to $200$ and $2000$.

\paragraph{Graph-regularized learning.}
For a four-task cycle or hub, set $(Cx)_e=\sqrt{a_e}(x_i-x_j)$
on edge $e=(i,j)$, with $a_e\in[0.25,4]$ and $y_e\in\RR^{16}$:
\begin{equation}\label{eq:experiment-graph}
 \min_x\max_y
 \left\{f(x)+y^\top Cx-\frac{0.5}{2}\norm{y}^2\right\}.
\end{equation}
Cycle overlap is $2$; hub primal and dual overlaps are $2$ and $3$.

\paragraph{Nonlinear group score shifts.}
With the same $f$ and $g(y)=0.1\norm{y}^2$, set $N=512$ and
\providecommand{\nonlinearcouplingcontinuation}{\\ &}
\begin{equation}\label{eq:experiment-nonlinear-coupling}
 \begin{aligned}
 H(x,y)={}&\frac1N\sum_{i=1}^4\sum_{r=1}^{128}
 \ell((\widetilde X_ix_i+B_iy)_r;\widetilde c_{ir})\nonlinearcouplingcontinuation
 -\frac1{8N}\sum_{i=1}^4\norm{B_iy}^2.
 \end{aligned}
\end{equation}
Here $\widetilde X_i,\widetilde c_i$ use 128 scaled task rows and their
labels. Four local scalar shifts give
$B_iy=b_i y_i\boldsymbol1_{128}$; a fifth shared shift adds
$y_5\boldsymbol1_{128}$. The bound $\ell''\le1/4$ ensures
convex--concavity. Overlaps are $2$ and $9$, with dimension and
coupling variation also changing.

\paragraph{Protocol.}
We compare prescribed sampled and assembly methods with fused EG
using $\gamma=0.9/(\kappa_{\max}+L_H)$ and applicable primal AGD.
Five runs from zero stop at relative squared $M$-distance $10^{-6}$ to
an independent Newton reference. Stage methods check after complete
stages; EG/AGD every ten iterations, giving different overshoots.
Initialization is charged, diagnostics are untimed, and runtime ratios
report bootstrap standard errors. This observed stopping rule is
distinct from an expected-error guarantee.

\paragraph{Results.}
\begin{table}[!htbp]
\centering\footnotesize\setlength{\tabcolsep}{3pt}
\caption{Joint rerun against EG $0.9$: sampled (S), assembly (A). Entries divide five-run means; runtime ratios show bootstrap SE. Stages are checked on completion, EG every ten iterations.}
\label{tab:experiments}
\begin{tabular}{@{}lrlrrrr@{}}\toprule
Case & $\kappa_4$ & & $N^f$ & $N_H^x$ & $N_H^y$ & Time\\\midrule
Graph & 20 & S & 3.99 & 16.45 & 15.57 & $21.07\,\pm\,1.66$\\
Graph & 20 & A & 3.83 & 20.10 & 20.10 & $10.39\,\pm\,0.76$\\
Graph & 2000 & S & 0.30 & 0.26 & 0.24 & $0.84\,\pm\,0.09$\\
Graph & 2000 & A & 0.27 & 0.29 & 0.29 & $0.60\,\pm\,0.07$\\
Nonlin. & 20 & S & 4.13 & 4.43 & 5.29 & $9.71\,\pm\,0.36$\\
Nonlin. & 20 & A & 3.97 & 10.51 & 10.51 & $9.48\,\pm\,0.79$\\
Nonlin. & 2000 & S & 0.40 & 0.09 & 0.11 & $0.62\,\pm\,0.03$\\
Nonlin. & 2000 & A & 0.42 & 0.23 & 0.23 & $0.63\,\pm\,0.04$\\
\bottomrule\end{tabular}
\end{table}

EG is faster on all six base cases. At the $2000$ endpoints in
Table~\ref{tab:experiments}, sampled runtime is $84\%$ of EG's on
graph and $62\%$ on nonlinear. Assembly is faster on graph;
nonlinear stage times are close. AGD remains fastest on bilinear cases.
The sweep changes the saddle and absolute cutoff, but fixed-saddle
controls retain the crossover. Individual counts show easy-block
savings (Table~\ref{tab:individual-main}).
Appendix~\ref{supp:sec:experiment-controls} reports overlap and cost
controls, the selected-row ablation, and their limitations;
EG remains fastest in the overlap and cost controls.

\section{Conclusion}\label{sec:structure}
Our randomized lifted extragradient method gives simultaneous oracle
guarantees for nonseparable objectives and nonlinear convex--concave
coupling. Under strong convexity, it converges linearly with expected
partial-gradient bounds that separate objective conditioning, coupling
smoothness, and input overlap. Regularization yields restricted-gap rates $\varepsilon^{-1/2}$
for objective gradients and $\varepsilon^{-1}$ for coupling gradients,
up to logarithmic factors. Bilinear lower bounds match disjoint block
pairs under the stated parameter and span-model restrictions.

Experiments show runtime gains on selected poorly conditioned instances,
while full-oracle and structure-exploiting baselines are faster elsewhere.
These results highlight the role of evaluation costs and problem structure.
Optimal general overlap dependence remains open; further directions include
stochastic oracles, constrained domains, and asynchronous updates.

\par
\setlength{\parskip}{5.5pt}
\FloatBarrier
\clearpage
\bibliographystyle{abbrvnat}
\bibliography{references}
\clearpage
\appendix
\section*{Appendices}
\setcounter{algorithm}{0}
\renewcommand{\thealgorithm}{S\arabic{algorithm}}
\renewcommand{\theHalgorithm}{S\arabic{algorithm}}
The supplement retains complete statements, algorithm descriptions, and proofs.
The table below identifies where to find each part of the argument.
All coupling Lipschitz bounds are derived from the original-coordinate
Lipschitz arrays. Appendix~\ref{supp:sec:algorithm} implements the method and verifies the
input metrics, Appendix~\ref{supp:sec:analysis} states and proves the
primary partial-gradient theorem with explicit constants, and
Appendix~\ref{supp:sec:coupling-blocks} gives the full-assembly comparison.

\begin{center}
\begin{tabularx}{\textwidth}{@{}lXlr@{}}
\toprule
Appendix & Contents & Key destination & Page\\
\midrule
\ref{supp:sec:model} & Problem data, normalization, exact oracles, and the span model & Bounds~\ref{supp:sec:coupling-normalization} & \pageref{supp:sec:model}\\
\ref{supp:sec:algorithm} & Gradient coordinates, component metrics, algorithm, and storage & Implementation~\ref{supp:sec:records} & \pageref{supp:sec:algorithm}\\
\ref{supp:sec:analysis} & Descent, stage contraction, explicit oracle bounds, and their proof & Upper bounds~\ref{supp:sec:main} & \pageref{supp:sec:analysis}\\
\ref{supp:sec:structure} & Coupling certificates, examples, and assembly comparison & Comparison~\ref{supp:sec:coupling-blocks} & \pageref{supp:sec:structure}\\
\ref{supp:sec:lower} & Lower statements, matching regimes, and finite constructions & Construction~\ref{supp:app:lower-full} & \pageref{supp:sec:lower}\\
\ref{supp:sec:convex} & Regularization, gap transfer, and convex-case results & Upper bounds~\ref{supp:sec:convex-upper} & \pageref{supp:sec:convex}\\
\ref{supp:app:aux} & Auxiliary geometry and normalized oracle formulas & Oracle formulas~\ref{supp:app:normalization} & \pageref{supp:app:aux}\\
\ref{supp:sec:experiments} & Benchmark models, solver results, and evaluation protocol & Results~\ref{supp:sec:experiment-base-results}--\ref{supp:sec:experiment-sweep-results} & \pageref{supp:sec:experiments}\\
\bottomrule
\end{tabularx}
\end{center}

\section{Problem formulation and information models}\label{supp:sec:model}
\subsection{Spaces and tagged notation}
Use the problem, block partition, and notation of Section~\ref{sec:model},
with $\mathcal X=\RR^{d_x}$, $\mathcal Y=\RR^{d_y}$,
$\mathcal Z=\mathcal X\times\mathcal Y$, and $n=m+q$.
For internal algorithm and proof formulas, define the tagged blocks
\begin{equation}\label{supp:eq:block-tags}
 V_f=\{(f,i):1\le i\le m\},\qquad
 V_g=\{(g,j):1\le j\le q\},\qquad V=V_f\mathbin{\dot\cup}V_g.
\end{equation}
Their coordinates and dimensions are
\[
 (z_{(f,i)},d_{(f,i)})=(x_i,d_{x,i}),\qquad
 (z_{(g,j)},d_{(g,j)})=(y_j,d_{y,j}),
\]
where $d_{x,i}=\dim(x_i)$ and $d_{y,j}=\dim(y_j)$.
For quadratic divergences, write $D_A(v,w)=\norm{v-w}_A^2/2$;
$\norm{\cdot}_\op$ denotes the induced operator norm.

\subsection{Objective and coupling data}\label{supp:sec:problem-data}
We use Assumptions~\ref{ass:players} and~\ref{ass:coupling} from
Section~\ref{sec:model}. In matrix notation, the joint objective models
\eqref{eq:main-model-f}--\eqref{eq:main-model-g} read
\begin{align}
 \tfrac12\norm{x-\widetilde x}_{M_x}^2
 &\le D_f(x,\widetilde x)
 \le\tfrac12\norm{x-\widetilde x}_{L_x}^2,\label{supp:eq:model-f}\\
 \tfrac12\norm{y-\widetilde y}_{M_y}^2
 &\le D_g(y,\widetilde y)
 \le\tfrac12\norm{y-\widetilde y}_{L_y}^2.\label{supp:eq:model-g}
\end{align}
The matrices are those of Section~\ref{sec:model}; the convex case uses
$M_x=M_y=0$ as in Assumption~\ref{ass:convex}.
The coupling arrays satisfy the global first-order
bounds~\eqref{eq:main-coupling-model}.

Define
\begin{equation}\label{supp:eq:TH}
 T_H(z)=\bigl(\nabla_xH(x,y),-\nabla_yH(x,y)\bigr).
\end{equation}

Let $L=\diag(L_x,L_y)$, with $h$ as in Section~\ref{sec:model}. The full saddle operator is
\begin{equation}\label{supp:eq:saddle-equation}
 F(z)=\bigl(\nabla_x\cL(x,y),-\nabla_y\cL(x,y)\bigr)
     =\nabla h(z)+T_H(z).
\end{equation}
Convex--concavity implies that $T_H$ is monotone; a first-order proof is given in Appendix~\ref{supp:app:monotonicity}. A point is a saddle point if and only if $F(z)=0$. Positive joint lower curvature of $f$ and $g$, together with monotonicity of $T_H$, makes $F$ strongly monotone and its zero unique. Existence is included in Assumption~\ref{ass:coupling}; in this global strongly monotone setting it also follows from Appendix~\ref{supp:app:existence}.

\subsection{Normalization and derived coupling bounds}\label{supp:sec:coupling-normalization}
Use the normalized coordinates of Section~\ref{sec:model}, together with
\[
 \widehat z=M^{1/2}z,\qquad
 \widehat h(\widehat z)=h(M^{-1/2}\widehat z),\qquad
 \widehat T_H(\widehat z)=M^{-1/2}T_H(M^{-1/2}\widehat z).
\]
The transformed functions $f$ and $g$ have lower curvature one.
Use the objective condition numbers and $\kappa_{\max}$ from~\eqref{eq:main-kappa}. These ratios have positive denominators and refer only to the declared joint models of $f$ and $g$, not to the curvature of $H$ or of the entire saddle objective.

\paragraph{Derived coupling parameters.}
With positive objective lower curvatures, $M$ from
Section~\ref{sec:model} is positive definite. Form the scalar matrices
\begin{equation}\label{supp:eq:coupling-scalar-matrix}
 \begin{gathered}
 \mathsf L_H=\begin{pmatrix}L^{H,xx}&L^{H,xy}\\L^{H,yx}&L^{H,yy}\end{pmatrix},\qquad
 \mathsf D_\mu=\diag(\mu_1^f,\ldots,\mu_m^f,\mu_1^g,\ldots,\mu_q^g),\\
 \mathsf A_H=\mathsf D_\mu^{-1/2}\mathsf L_H\mathsf D_\mu^{-1/2}.
 \end{gathered}
\end{equation}
Each entry of $L^{H,xx}$, for example, is $L_{it}^{H,xx}$. Unlike $M$, $\mathsf D_\mu$ has one scalar entry per original block. The normalized coupling parameters in~\eqref{eq:main-coupling-row-parameters} are the Euclidean row norms of $\mathsf A_H$. Define the full Lipschitz bound
\begin{equation}\label{supp:eq:coupling-derived-full}
 L_H:=\norm{\mathsf A_H}_\op.
\end{equation}
These are certified upper bounds, not necessarily the smallest Lipschitz moduli. Their normalization uses both output and input curvatures, and the coupling parameters can be below one. The quantity $\kappa_{\max}$ in~\eqref{eq:main-kappa} uses only objective parameters.

Use the primal and dual input supports of Section~\ref{sec:model}.
For $I\subseteq\{1,\ldots,m\}$ and $J\subseteq\{1,\ldots,q\}$,
define the original-variable projection by
\begin{equation}\label{supp:eq:original-input-projection}
 P_{I,J}(x,y)=(\bar x,\bar y),\qquad
 \bar x_t=\mathbf1_{\{t\in I\}}x_t,\quad
 \bar y_u=\mathbf1_{\{u\in J\}}y_u.
\end{equation}
For internal algorithmic operations, combine each support pair into a tagged set:
\begin{equation}\label{supp:eq:coupling-derived-supports}
 \begin{aligned}
 S_{H,i}^x&=\{(f,t):t\in I_{H,i}^x\}
       \cup\{(g,u):u\in J_{H,i}^x\},\\
 S_{H,j}^y&=\{(f,t):t\in I_{H,j}^y\}
       \cup\{(g,u):u\in J_{H,j}^y\}.
 \end{aligned}
\end{equation}
For $S\subseteq V$, let $P_S$ retain the blocks in $S$ and set the
others to zero. This agrees with the original-variable projection above:
\[
 P_{S_{H,i}^x}=P_{I_{H,i}^x,J_{H,i}^x},\qquad
 P_{S_{H,j}^y}=P_{I_{H,j}^y,J_{H,j}^y}.
\]
\begin{lemma}[Normalization of the original coupling bounds]\label{supp:lem:coupling-normalization}
Assumption~\ref{ass:coupling} and the derived parameters above imply
\begin{align}
 \frac{\norm{\nabla_{x_i}H(z)-\nabla_{x_i}H(v)}}{\sqrt{\mu_i^f}}
 &\le \kappa_{H,i}^x\norm{P_{S_{H,i}^x}(z-v)}_M,
                  \label{supp:eq:coupling-block-Lip-x}\\
 \frac{\norm{\nabla_{y_j}H(z)-\nabla_{y_j}H(v)}}{\sqrt{\mu_j^g}}
 &\le \kappa_{H,j}^y\norm{P_{S_{H,j}^y}(z-v)}_M,
                  \label{supp:eq:coupling-block-Lip-y}
\end{align}
and the full-operator bound
\begin{equation}\label{supp:eq:local-Lip}
 \norm{T_H(z)-T_H(v)}_{M^{-1}}
 \le L_H\norm{z-v}_M\qquad(z,v\in\mathcal Z).
\end{equation}
\end{lemma}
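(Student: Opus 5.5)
The plan is to rewrite the original row bounds~\eqref{eq:main-coupling-model} in normalized block variables and then apply Cauchy--Schwarz; the full-operator bound~\eqref{supp:eq:local-Lip} will follow by stacking the rows and using the operator norm of $\mathsf A_H$. Throughout, write $\delta_t=\sqrt{\mu_t^f}\norm{x_t-x_t^\prime}$ and $\epsilon_\ell=\sqrt{\mu_\ell^g}\norm{y_\ell-y_\ell^\prime}$ for the normalized blockwise input differences of $z-v$. For each support set, $\norm{P_{S_{H,i}^x}(z-v)}_M^2=\sum_{t\in I_{H,i}^x}\delta_t^2+\sum_{\ell\in J_{H,i}^x}\epsilon_\ell^2$, because $M$ is block diagonal with scalar multiples of the identity on each block.

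For~\eqref{supp:eq:coupling-block-Lip-x}, divide the first inequality of~\eqref{eq:main-coupling-model} by $\sqrt{\mu_i^f}$. Then write each term $L_{it}^{H,xx}\norm{x_t-x_t^\prime}/\sqrt{\mu_i^f}$ as $\bigl(L_{it}^{H,xx}/\sqrt{\mu_i^f\mu_t^f}\bigr)\delta_t$, and treat the $y$ terms the same way. The resulting sum is an inner product between row $(f,i)$ of $\mathsf A_H$ and the vector $(\delta,\epsilon)$. Only indices with positive coefficients contribute, and these are exactly the indices in $I_{H,i}^x$ and $J_{H,i}^x$. So we may restrict the vector $(\delta,\epsilon)$ to the support before applying Cauchy--Schwarz. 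This yields the row norm, which is $\kappa_{H,i}^x$ by~\eqref{eq:main-coupling-row-parameters}, times $\norm{P_{S_{H,i}^x}(z-v)}_M$. The dual rows~\eqref{supp:eq:coupling-block-Lip-y} are handled identically; the sign in $T_H$ does not affect norms.

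For~\eqref{supp:eq:local-Lip}, expand $\norm{T_H(z)-T_H(v)}_{M^{-1}}^2=\sum_i\norm{\nabla_{x_i}H(z)-\nabla_{x_i}H(v)}^2/\mu_i^f+\sum_j\norm{\nabla_{y_j}H(z)-\nabla_{y_j}H(v)}^2/\mu_j^g$. Do not apply Cauchy--Schwarz row by row here. Instead, keep the pre-Cauchy--Schwarz bound from the previous step: each normalized row difference is at most the corresponding entry of $\mathsf A_H(\delta,\epsilon)^\top$. All entries of $\mathsf A_H$ and of $(\delta,\epsilon)$ are nonnegative, so squaring and summing preserves the inequality. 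This gives $\norm{T_H(z)-T_H(v)}_{M^{-1}}\le\norm{\mathsf A_H(\delta,\epsilon)}\le\norm{\mathsf A_H}_\op\norm{(\delta,\epsilon)}=L_H\norm{z-v}_M$.

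The only delicate point is this last step. Summing the squared row bounds $\kappa^2\norm{P(z-v)}_M^2$ would produce a Frobenius- or overlap-type constant rather than $\norm{\mathsf A_H}_\op$. To get the operator norm, one must pass to the scalar vector of block norms and apply the operator norm of the scalar matrix $\mathsf A_H$ to it, using nonnegativity so that the entrywise inequalities survive squaring.
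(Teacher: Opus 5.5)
Your proposal is correct and follows the paper's proof essentially step for step. The paper also collects the curvature-weighted block distances into a nonnegative vector and bounds each normalized row difference by the corresponding row of $\mathsf A_H$ applied to that vector, using Cauchy--Schwarz on the row's nonzero entries for the row bounds and the componentwise inequality with nonnegativity to reach $\norm{\mathsf A_H}_\op$ for the full-operator bound, which is exactly the distinction you identify as the delicate point.
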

\begin{proof}
Collect the weighted input distances into a nonnegative vector $a$:
\[
 a_{(f,t)}=\sqrt{\mu_t^f}\norm{x_t-\widetilde x_t},\qquad
 a_{(g,u)}=\sqrt{\mu_u^g}\norm{y_u-\widetilde y_u}.
\]
For the primal derivative, divide the primal inequality in~\eqref{eq:main-coupling-model}
by $\sqrt{\mu_i^f}$. Its left side is at most row $i$ of
$\mathsf A_H$ applied to $a$. Cauchy--Schwarz on that row's nonzero
entries gives~\eqref{supp:eq:coupling-block-Lip-x}: the row norm is
$\kappa_{H,i}^x$, and the corresponding input norm is
$\norm{P_{S_{H,i}^x}(z-v)}_M$. Row $m+j$ proves the dual bound in
the same way.

For the full operator, the vector of normalized output-block difference
norms is bounded componentwise by $\mathsf A_Ha$. Since this vector is
nonnegative, taking Euclidean norms gives
\[
 \norm{T_H(z)-T_H(v)}_{M^{-1}}
 \le\norm{\mathsf A_Ha}
 \le L_H\norm{a}=L_H\norm{z-v}_M.
\]
The minus sign in the dual part of $T_H$ does not affect these norms.
\end{proof}
For internal tagged formulas, write
$\nu_{(f,t)}=\nu_t^x$ and $\nu_{(g,u)}=\nu_u^y$.
The overlap factors remain those of~\eqref{eq:main-overlap} and its
dual analogue, using the tagged supports above. Empty supports have
overlap one. These supports and overlap factors are certified
from the supplied arrays and may overestimate actual functional
dependence; they count shared declared inputs, independently of block
dimensions.

An all-zero row has an empty support and a constant partial gradient.
The algorithm initializes and samples every coupling row by the same
rules, with positive sampling weights even for zero bounds. No
classification of constant derivatives is required; an empty support
simply gives a zero component input seminorm.

The normalized coupling operator is $L_H$-Lipschitz in the Euclidean norm. Known block-scalar input and output rescalings implement each normalized oracle with exactly one corresponding original call; Appendix~\ref{supp:app:normalization} gives the formulas. These rescalings preserve the lower-model spans.
For bilinear coupling $H(x,y)=\ip{y}{Bx}$, the actual normalized modulus is
\begin{equation}\label{supp:eq:bilinear-gamma}
 \Lip(\widehat T_H)=\norm{M_y^{-1/2}BM_x^{-1/2}}_\op\le L_H.
\end{equation}
The scalar-array Lipschitz bound can be conservative when several blocks interact. On this subclass, a coupling partial-gradient evaluation returns the corresponding block of $B^\top y$ or $Bx$; unknown matrix entries are not supplied. The lower construction uses disjoint primal--dual pairs, with each chain's strength bounded by both corresponding declared cross coefficients. If the declared array consists only of symmetric cross entries on a nonempty matching, with all other entries zero, these strengths equal the corresponding row parameters, and their maximum is $L_H$.

\subsection{Declared-bound problem classes}\label{supp:sec:problem-classes}
\begin{definition}[Prescribed-array and aggregate problem classes]\label{supp:def:strong-class}
For fixed block dimensions, known block-scalar matrices $0\prec M\preceq L$, and a fixed nonnegative scalar coupling array $\mathsf L_H$, let $\mathfrak C_{\rm arr}(M,L,\mathsf L_H)$ contain triples satisfying Assumptions~\ref{ass:players} and~\ref{ass:coupling} with exactly these declared bounds. Thus the row parameters, supports, overlaps, and derived norm $L_H$ are fixed class data. The coefficients are upper bounds; functions need not attain them or depend on every declared input.

For a positive scalar bound $\overline L_H$, the aggregate class $\mathfrak C(M,L,\overline L_H)$ is the union of these prescribed-array classes over $\norm{\mathsf A_H}_\op\le\overline L_H$. Its coupling arrays may vary. The blockwise lower comparison below uses $\mathfrak C_{\rm arr}$; the aggregate single-edge result is a corollary. The decomposition, block structure, and bounds are declared information. Functions are accessible through the oracles below, not through free formulas, unknown matrix entries, or factorizations. Lower bounds permit finite block dimensions to depend on accuracy; the number of blocks and all declared scalar bounds remain fixed.
\end{definition}

\subsection{Exact oracles and complexity criteria}\label{supp:sec:oracle-model}
Use the exact oracles of Table~\ref{tab:main-oracles}, the positive costs
and total work $W_\tau$ in~\eqref{eq:main-work}, and the relative
mean-square target~\eqref{eq:main-target}. All initialization, update,
and correction calls are counted. The upper algorithm is independent
of the costs; the lower hard instance may depend on them.

\subsection{The lower-bound information model}\label{supp:sec:span-model}
\begin{definition}[Randomized blockwise span model]\label{supp:def:span}
Let $\mathscr S_{\iota,k}$ be the span available in original block $\iota$
after $k$ individual oracle calls, initially $\mathscr S_{\iota,0}=\{0\}$.
Every argument block of call $k+1$ lies in its pre-call span. If that
call returns a gradient vector $i_\iota$ to block $\iota$, set
$\mathscr S_{\iota,k+1}=\spanop(\mathscr S_{\iota,k}\cup\{i_\iota\})$;
other spans are unchanged. Oracle returns need not lie in the pre-call spans.
At an almost surely finite stopping time $K$, each output block lies in
$\mathscr S_{\iota,K}$. Query selection, scalar coefficients, and stopping
may depend measurably on the full history and an internal random seed;
counts may have unbounded support.

Arbitrary basis-vector creation, free vector transfer between blocks,
coordinatewise transformations outside the spans, and free matrix entries,
inverse products, or factorizations are prohibited. Scalar computation
does not enlarge the vector spans.
\end{definition}
The upper bounds below use only the stated exact oracles; the span restriction is needed for the lower bounds. The proposed method is also admissible in the span model at $z^0=0$, as shown after Algorithm~\ref{supp:alg:strong}. For a nonzero start, its initial block vectors may be included in the initial memories. All lower bounds use the zero start.

\FloatBarrier

\section{Construction of the method}\label{supp:sec:algorithm}
Work in $M^{1/2}$ coordinates and suppress hats in Appendices~\ref{supp:sec:algorithm}--\ref{supp:sec:analysis}. Only in internal algorithmic and proof formulas, use a tagged index $\iota\in V$ as defined in~\eqref{supp:eq:block-tags}. Its parameter and counter dictionary is
\begin{equation}\label{supp:eq:tag-dictionary}
 (\mu_\iota,L_\iota,\kappa_\iota,N_\iota,\tau_\iota)=
 \begin{cases}
  (\mu_i^f,L_i^f,\kappa_i^f,N_i^f,\tau_i^f),&\iota=(f,i),\\
  (\mu_j^g,L_j^g,\kappa_j^g,N_j^g,\tau_j^g),&\iota=(g,j).
 \end{cases}
\end{equation}
For $a=(a^x,a^y)$,
\begin{equation}\label{supp:eq:tag-gradient}
 \nabla_\iota h(a)=
 \begin{cases}
  \nabla_{x_i}f(a^x),&\iota=(f,i),\\
  \nabla_{y_j}g(a^y),&\iota=(g,j).
 \end{cases}
\end{equation}
Derived objective component quantities retain the same tag. The joint matrix bounds are
\begin{equation}\label{supp:eq:normalized-model}
 \tfrac12\norm{z-v}^2\le D_h(z,v)
 \le\tfrac12\sum_{\iota\in V}\kappa_\iota\norm{z_\iota-v_\iota}^2.
\end{equation}

\subsection{An implicit lifting and its representation}
For $w=(z,u)$, use $\lambda_\iota,\Lambda,D$ from~\eqref{eq:main-D}
and the following text, with the tagged objective indices above. Set
\begin{align}
 \phi(z)&=h(z)-\tfrac14\norm{z}^2,
 &r(z,u)&=\tfrac14\norm{z}^2+\phi^*(u).\label{supp:eq:lifting}
\end{align}
\begin{lemma}[Lifted geometry]\label{supp:lem:geometry}
The function $\phi$ in~\eqref{supp:eq:lifting} is $1/2$-strongly convex and has the joint upper model $\Lambda$. Its conjugate is finite and differentiable everywhere, and is $1$-strongly convex in the metric $\Lambda^{-1}$. Consequently,
\begin{equation}\label{supp:eq:r-strong}
 D_r(v,w)\ge\tfrac12\norm{v-w}_D^2.
\end{equation}
The maps $\nabla\phi$ and $\nabla\phi^*$ are mutual inverses.
\end{lemma}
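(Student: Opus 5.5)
We work in normalized coordinates, where \eqref{supp:eq:normalized-model} gives $\tfrac12\norm{z-v}^2\le D_h(z,v)\le\tfrac12\sum_\iota\kappa_\iota\norm{z_\iota-v_\iota}^2$. The plan is to get the two-sided model for $\phi$ by subtracting the quadratic, pass to the conjugate by standard duality, and then add the two pieces of $r$ blockwise.

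\textbf{Step 1: the model for $\phi$.} Since $D_\phi(z,v)=D_h(z,v)-\tfrac14\norm{z-v}^2$, we get
\[
\tfrac14\norm{z-v}^2\le D_\phi(z,v)\le\tfrac12\sum_\iota(\kappa_\iota-\tfrac12)\norm{z_\iota-v_\iota}^2=\tfrac12\norm{z-v}_\Lambda^2 .
\]
The left inequality is $1/2$-strong convexity, and the right one is the joint upper model $\Lambda$ with $\lambda_\iota=\kappa_\iota-\tfrac12>0$.

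\textbf{Step 2: the conjugate.} Strong convexity makes $\phi$ coercive, so $\phi^*$ is finite everywhere. Strict convexity of $\phi$ gives differentiability of $\phi^*$, with $\nabla\phi^*(u)$ the unique maximizer of $\ip{u}{v}-\phi(v)$. Because $\phi$ is differentiable and strongly convex, $\nabla\phi$ is a bijection: surjectivity follows since for each $u$ the function $\phi-\ip{u}{\cdot}$ has a minimizer. The maps $\nabla\phi$ and $\nabla\phi^*$ are therefore mutual inverses.

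For the metric claim, the upper quadratic model of $\phi$ in the norm $\norm{\cdot}_\Lambda$ implies $\Lambda^{-1}$-strong convexity of $\phi^*$. I would show this by the standard argument rather than citing it. Write $a=\nabla\phi^*(u)$ and $b=\nabla\phi^*(u')$, so that $u=\nabla\phi(a)$ and $u'=\nabla\phi(b)$. Then
\[
D_{\phi^*}(u',u)=D_\phi(a,b).
\]
The upper model yields cocoercivity, $D_\phi(a,b)\ge\tfrac12\norm{\nabla\phi(a)-\nabla\phi(b)}_{\Lambda^{-1}}^2$. This comes from applying the upper bound at the point $a-\Lambda^{-1}(\nabla\phi(a)-\nabla\phi(b))$ to the function $\phi-\ip{\nabla\phi(b)}{\cdot}$, which is minimized at $b$. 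Combining gives $D_{\phi^*}(u',u)\ge\tfrac12\norm{u'-u}_{\Lambda^{-1}}^2$.

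\textbf{Step 3: the bound \eqref{supp:eq:r-strong}.} Since $r$ is separable in $(z,u)$, we have
\[
D_r(v,w)=\tfrac14\norm{v_z-w_z}^2+D_{\phi^*}(v_u,w_u)\ge\tfrac12\norm{v-w}_D^2
\]
with $D=\diag(\tfrac12 I,\Lambda^{-1})$.

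\textbf{Main obstacle.} The delicate step is the cocoercivity argument, because the upper bound is a joint, nonseparable model in a nonscalar metric $\Lambda$. I expect the usual proof to go through verbatim with $\Lambda$-norms, since it only uses the upper inequality at a single shifted point and the fact that $b$ minimizes $\phi-\ip{\nabla\phi(b)}{\cdot}$. The convexity of that shifted function, needed for $b$ to be its minimizer, comes from the lower model.
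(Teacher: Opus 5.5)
Your proof is correct and follows the paper's argument. You subtract the quadratic to get $\tfrac14\norm{z-v}^2\le D_\phi(z,v)\le\tfrac12\norm{z-v}_\Lambda^2$, use coercivity for finiteness, differentiability and the inverse relations, and then add the $z$-quadratic of $r$. The only difference is in the conjugate step. The paper conjugates the upper model at $a=\nabla\phi^*(u)$ directly, obtaining $\phi^*(v)\ge\phi^*(u)+\ip{a}{v-u}+\tfrac12\norm{v-u}_{\Lambda^{-1}}^2$, whereas you go through $D_{\phi^*}(u',u)=D_\phi(a,b)$ and cocoercivity. These are the same computation: your shifted point $a-\Lambda^{-1}(u-u')$ is exactly the maximizer of the paper's supremum with $v=u'$.
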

\begin{proof}
Subtract the quadratic in~\eqref{supp:eq:lifting} from the joint model~\eqref{supp:eq:normalized-model} and conjugate the resulting upper model. Appendix~\ref{supp:app:conjugate} gives the complete argument, including finiteness, differentiability, and the inverse relations.
\end{proof}
The conjugate is an analytical device; it is not a supplied oracle.

Define
\begin{equation}\label{supp:eq:J}
 \cJ(z,u)=(T_H(z)+u,-z),\qquad
 w^\star=(z^\star,\nabla\phi(z^\star)).
\end{equation}
Then $\nabla r(w^\star)+\cJ(w^\star)=0$, and
\begin{equation}\label{supp:eq:J-monotone}
 \ip{\cJ(z,u)-\cJ(v,\widetilde u)}{(z-v,u-\widetilde u)}
 =\ip{T_H(z)-T_H(v)}{z-v}\ge0.
\end{equation}
The cross terms cancel. Thus the lifted operator is monotone even though it need not be linear or skew-symmetric.

The implementation stores the gradient coordinate
\begin{equation}\label{supp:eq:representation}
 \theta=\nabla r(w)=(z/2,a),\qquad
 w(\theta)=(2\theta_z,\nabla\phi(\theta_u)).
\end{equation}
Here $a=\nabla\phi^*(u)$. Assigning a represented vector gives $z=2\theta_z$ and $a=\theta_u$ by known arithmetic. The vector $u=\nabla\phi(a)$ is implicit; no evaluation of $\nabla\phi^*$ or inverse gradient is needed.

Let $U_\iota:\RR^{d_\iota}\to\mathcal Z$ be the canonical embedding of the tagged original-variable block $\iota$, so that $U_\iota^\top z=z_\iota$ and $\sum_{\iota\in V}U_\iota U_\iota^\top=I$. Its dimension and original-variable coordinates are specified after~\eqref{supp:eq:block-tags}. Use the component set
\[
 \begin{gathered}
 \mathcal A_H^x=\{(H,x,i):1\le i\le m\},\qquad
 \mathcal A_H^y=\{(H,y,j):1\le j\le q\},\\
 \cI=V\mathbin{\dot\cup}\mathcal A_H^x\mathbin{\dot\cup}\mathcal A_H^y.
 \end{gathered}
\]
Here $k\in\cI$ labels an objective component or one coupling partial gradient.
The lifted components are
\begin{align}
 \cJ_\iota(z,u)&=(U_\iota u_\iota,-U_\iota z_\iota),\label{supp:eq:Oi}\\
 \cJ_{H,i}^x(z,u)&=(U_{(f,i)}\nabla_{x_i}H(z),0),\label{supp:eq:OH}\\
 \cJ_{H,j}^y(z,u)&=(-U_{(g,j)}\nabla_{y_j}H(z),0).
\end{align}
Their gradient-coordinate evaluations $G_k(\theta)=\cJ_k(w(\theta))$
and stored local coordinates are given in Table~\ref{supp:tab:components}.

\paragraph{One component set for every row.}
All components belong to $\cI$ and use the same stage initialization,
sampling, and cache refresh. The positive floor in the coupling weights
ensures that no row needs to be classified as constant. For an empty
support, $D_k=0$ and the local-memory correction vanishes; a constant
component also has zero sampled evaluation difference.

\subsection{Component input seminorms and sampling parameters}
Use the matrices $D_k$ defining the main text's component input
seminorms and its variation constants $c_k$.
To allow alternative allocations, choose positive shares $t_k$ satisfying
\begin{equation}\label{supp:eq:packing}
 t_\iota
 +\sum_{i:\iota\in S_{H,i}^x}t_{(H,x,i)}
 +\sum_{j:\iota\in S_{H,j}^y}t_{(H,y,j)}
 \le1\qquad(\iota\in V).
\end{equation}
Every component has a positive share; empty coupling supports impose
no packing constraint. Define
\begin{equation}\label{supp:eq:Wi}
 \norm{w}_{D_\iota}^2=t_\iota\bigl(\tfrac12\norm{z_\iota}^2
       +\lambda_\iota^{-1}\norm{u_\iota}^2\bigr)
\end{equation}
and
\begin{equation}\label{supp:eq:WH}
 D_{H,i}^x=t_{(H,x,i)}\diag(\tfrac12P_{S_{H,i}^x},0),\qquad
 D_{H,j}^y=t_{(H,y,j)}\diag(\tfrac12P_{S_{H,j}^y},0).
\end{equation}
The packing constraints are equivalent to the metric allocation
\begin{equation}\label{supp:eq:allocation}
 D_k\succeq0,\qquad\sum_{k\in\cI}D_k\preceq D.
\end{equation}
Set
\begin{equation}\label{supp:eq:b}
 c_\iota=\sqrt{\frac{2\kappa_\iota-1}{t_\iota}},\qquad
 c_{(H,x,i)}=\max\left\{1,\frac{2\kappa_{H,i}^x}{\sqrt{t_{(H,x,i)}}}\right\},\qquad
 c_{(H,y,j)}=\max\left\{1,\frac{2\kappa_{H,j}^y}{\sqrt{t_{(H,y,j)}}}\right\}.
\end{equation}
For every pair of lifted points,
\begin{equation}\label{supp:eq:component-difference}
 \norm{\cJ_k(w)-\cJ_k(v)}_{D^{-1}}^2
 \le c_k^2\norm{w-v}_{D_k}^2.
\end{equation}
For an objective component, both sides equal
$2\norm{u_\iota-\widetilde u_\iota}^2+\lambda_\iota\norm{z_\iota-\widetilde z_\iota}^2$.
For a primal coupling partial, Lemma~\ref{supp:lem:coupling-normalization} gives
\begin{align}\label{supp:eq:global-component-bound}
 \norm{\cJ_{H,i}^x(w)-\cJ_{H,i}^x(v)}_{D^{-1}}^2
 &=2\norm{\nabla_{x_i}H(z)-\nabla_{x_i}H(\widetilde z)}^2\nonumber\\
 &\le2(\kappa_{H,i}^x)^2\norm{P_{S_{H,i}^x}(z-\widetilde z)}^2
 \le c_{(H,x,i)}^2\norm{w-v}_{D_{H,i}^x}^2.
\end{align}
The dual calculation is identical. These are differences of evaluations;
no linearity or individual-component monotonicity is assumed.

The default shares are
\begin{equation}\label{supp:eq:dense-weights}
 t_\iota=\tfrac12,\qquad
 t_{(H,x,i)}=\frac1{2\Delta_{H,i}^x},\qquad
 t_{(H,y,j)}=\frac1{2\Delta_{H,j}^y}.
\end{equation}
Every support containing $\iota$ has $\Delta\ge\nu_\iota$. Thus, at a
block of positive input multiplicity, the coupling share satisfies
\begin{equation}\label{supp:eq:dense-allocation}
 \sum_{i:\iota\in S_{H,i}^x}\frac1{2\Delta_{H,i}^x}
 +\sum_{j:\iota\in S_{H,j}^y}\frac1{2\Delta_{H,j}^y}
 \le\frac{\nu_\iota}{2\nu_\iota}=\frac12.
\end{equation}
It is zero when $\nu_\iota=0$. The objective component uses the other
half of its original-variable metric and half its auxiliary metric,
proving feasibility.

Choose $S,p_k,\eta,K$ by~\samplingparametersreference,
using the variation constants for the selected allocation.
All $c_k$ are positive, including for zero row bounds. Every component
is initialized once per stage, and those calls are counted.

\subsection{Stored coordinates and component formulas}\label{supp:sec:records}
At inner iteration $t$, the current representation is
$\theta_t=(z_t/2,a_t)$. Store a component value $g_k^t$ for each
$k\in\cI$ and the aggregate
$g^t=\sum_{k\in\cI}g_k^t$.
For an objective component $\iota$, store the two local coordinates
$\widetilde z_{\iota,t},\widetilde u_{\iota,t}$.
For a coupling component, store
$\widetilde z^x_{H,i,t}$ or $\widetilde z^y_{H,j,t}$ on its declared
input support. These latter vectors are understood as zero outside
that support; only their supported coordinates need storage.
Stage initialization evaluates every component once and stores these
local values; all initialization calls are counted. Subsequent inner
iterations need not reconstruct the current full vector $u=\nabla\phi(a)$.

Each row of Table~\ref{supp:tab:components} uses one original partial-gradient
call at the full point $a^x$, $a^y$, or $z$, with known normalization scalings.

\begin{table}[H]
\caption{Four normalized component formulas at $\theta=(z/2,a)$: one original oracle call with input and output rescaling.}\label{supp:tab:components}
\centering\small\renewcommand{\arraystretch}{1.5}
\begin{tabularx}{\textwidth}{@{}l l X l@{}}
\toprule
Component $k$ & Partial gradient and counted return & Fresh lifted value $G_k(\theta)$ & Local coordinates for an update\\
\midrule
$\iota=(f,i)$ & $\mathsf q=\nabla_{x_i}f(a^x)$; $N_i^f$ &
$u_\iota=\mathsf q-a_\iota/2$,\newline $(U_\iota u_\iota,-U_\iota z_\iota)$ & $(z_\iota,u_\iota)$\\
$\iota=(g,j)$ & $\mathsf q=\nabla_{y_j}g(a^y)$; $N_j^g$ &
$u_\iota=\mathsf q-a_\iota/2$,\newline $(U_\iota u_\iota,-U_\iota z_\iota)$ & $(z_\iota,u_\iota)$\\
$(H,x,i)$ & $\mathsf q=\nabla_{x_i}H(z)$; $N_{H,i}^x$ &
$(U_{(f,i)}\mathsf q,0)$ & $P_{S_{H,i}^x}z$\\
$(H,y,j)$ & $\mathsf q=\nabla_{y_j}H(z)$; $N_{H,j}^y$ &
$(-U_{(g,j)}\mathsf q,0)$ & $P_{S_{H,j}^y}z$\\
\bottomrule
\end{tabularx}
\end{table}

For a fresh objective gradient return at $\theta_t$, write
$u_{\iota,t}=\mathsf q-a_{\iota,t}/2$. The local-memory term is explicitly
\begin{align}
 d_t&=\bigl(t_\iota\tfrac12U_\iota(z_{\iota,t}-\widetilde z_{\iota,t}),
       t_\iota\lambda_\iota^{-1}U_\iota(u_{\iota,t}-\widetilde u_{\iota,t})\bigr)
       &&\text{if }i_t=\iota,\label{supp:eq:record-momentum-i}\\
 d_t&=\bigl(t_{(H,x,i)}\tfrac12P_{S_{H,i}^x}(z_t-\widetilde z^x_{H,i,t}),0\bigr)
       &&\text{if }i_t=(H,x,i),\label{supp:eq:record-momentum-H}\\
 d_t&=\bigl(t_{(H,y,j)}\tfrac12P_{S_{H,j}^y}(z_t-\widetilde z^y_{H,j,t}),0\bigr)
       &&\text{if }i_t=(H,y,j).\nonumber
\end{align}
These formulas use the fresh return and the old stored coordinates,
with no additional evaluation. In the proof, their conceptual lifted anchor
is $v_{k,t}$. Any completion outside the coordinates used by $D_k$ and
$\cJ_k$ is immaterial: $g_k^t=\cJ_k(v_{k,t})$,
$r_k^t=D_kv_{k,t}$, and
$d_t=D_{i_t}(w_t-v_{i_t,t})$. This notation does not require forming
$w_t=(z_t,u_t)$ in the implementation.

\subsection{One stage and the outer algorithm}
Every stage of Algorithm~\ref{alg:epoch} initializes each sampled component
once and completes all $K$ iterations. Keep the fresh $i_t$ value and
current-point coordinates until its cache update after correction, which
uses the old table. The $j_t$ evaluation at $\bar\theta_t$ never refreshes
the table; discard its local coordinates. Both prescribed evaluations are
charged even when $j_t=i_t$; they occur at the current and predicted
points, respectively, which may coincide. The prediction uses
$d_t=R_{i_t}(\theta_t)-r_{i_t}^t$ from
\eqref{supp:eq:record-momentum-i}--\eqref{supp:eq:record-momentum-H}.

\begin{algorithm}[H]
\caption{Algorithm under strong convexity of $f$ and $g$ in original coordinates}\label{supp:alg:strong}
\begin{algorithmic}[1]
\Require The oracles of Table~\ref{tab:main-oracles}; known $M,L$ and the four original coupling Lipschitz arrays; initial point $z^0$; $0<\varepsilon\le1/2$.
\Ensure An output in original coordinates satisfying Theorem~\ref{supp:thm:strong}.
\State Form normalized oracle wrappers using $\widehat z=M^{1/2}z$.
\State Compute the normalized bounds, supports, and overlaps from Appendix~\ref{supp:sec:model}; form the lifted data and default allocation~\eqref{supp:eq:Wi}--\eqref{supp:eq:dense-weights}.
\State Compute $S,p,\eta,K$ from~\samplingparametersreference.
\State $\widehat z^0\gets M^{1/2}z^0$; $\theta^{(0)}\gets(\widehat z^0/2,\widehat z^0)$.
\State Choose $E$ as in~\eqref{eq:main-epochs}.
\For{$e=0,\ldots,E-1$}
 \State $\theta^{(e+1)}\gets\operatorname{Stage}(\theta^{(e)};p,\eta,K)$, using fresh independent samples.
 \Statex \hspace{\algorithmicindent}\hspace{\algorithmicindent}$\theta^{(e+1)}=\bar\theta_{e,\tau_e}$: retain both returned coordinates; do not reset $a$ to $z$.
\EndFor
\State \Return $z^{\out}=M^{-1/2}(2\theta_z^{(E)})$.
\end{algorithmic}
\end{algorithm}

Any feasible allocation~\eqref{supp:eq:allocation} may replace the
default in Algorithm~\ref{alg:epoch}: use the corresponding $D_k,c_k$,
form $R_k=D_kw$ through the local records above, and recompute
$S,p,\eta,K$. Theorem~\ref{supp:thm:epoch} gives its counts.

At the zero start, the method is admissible in Definition~\ref{supp:def:span}. Each original-variable or auxiliary block is a scalar combination of vectors already available in that same original block and its oracle returns. The embeddings $U_\iota$ only place a block in its own position; they do not transfer information to another block. Coupling returns are the only vectors that can transmit information through $H$. This establishes span admissibility without assuming that the arithmetic updates are sparse.

\subsection{Storage requirements}
Let $d=d_x+d_y$ and let $\dim(S)$ be the total coordinate dimension of
the blocks in $S$. Storing component values in their output blocks and
anchors in their input blocks requires retained memory
\begin{equation}\label{supp:eq:coupling-block-memory}
 O\!\left(d+\sum_{i=1}^m\dim(S_{H,i}^x)
            +\sum_{j=1}^q\dim(S_{H,j}^y)\right).
\end{equation}
Objective records, coupling outputs, the aggregate, and represented vectors
occupy $O(d)$ coordinates; coupling anchors give
the support sums, equal to $\sum_{\iota\in V}\nu_\iota\dim(z_\iota)$.
The uniform output index can be drawn in advance, retaining only its
prediction. Sparse support records and row parameters are included;
supplied dense arrays and peak preprocessing memory are excluded.
Dense dependence gives $O((m+q)d)$ retained memory, and bounded input
overlap gives $O(d)$; no arithmetic-operation bound is asserted.

\clearpage

\section{Convergence analysis}\label{supp:sec:analysis}
\subsection{An abstract lifted system}
\begin{definition}[Admissible lifted data]\label{supp:def:lifted-data}
Let the lifted space be a finite-dimensional Euclidean space. The data consist of a finite differentiable function $r$, a matrix $D\succ0$, a finite family of maps $(\cJ_k)_{k\in\cI}$, fixed matrices $D_k\succeq0$ with $\sum_kD_k\preceq D$, and positive constants $c_k$. They satisfy
\begin{enumerate}
 \item $D_r(v,w)\ge\norm{v-w}_D^2/2$ for all $v,w$;
 \item $\cJ=\sum_k\cJ_k$ is monotone and $\nabla r(w^\star)+\cJ(w^\star)=0$ for some $w^\star$;
 \item $\norm{\cJ_k(v)-\cJ_k(w)}_{D^{-1}}^2\le c_k^2\norm{v-w}_{D_k}^2$ for all $k,v,w$.
\end{enumerate}
The parameters are given by~\samplingparametersreference. The represented updates use $\theta=\nabla r(w)$. Strong convexity makes $\nabla r$ invertible onto the full dual space. For an oracle-complexity interpretation, the component evaluations in gradient coordinates and local-memory corrections must additionally be implementable with the declared, counted information.
\end{definition}

For the abstract system, use the prediction--correction updates of Algorithm~\ref{alg:epoch} with fresh component values $\cJ_k(w(\theta))$, local-memory vectors $D_k(w-v_k)/8$, and the delayed anchor assignment $v_{i_t,t+1}=w_t$; other anchors remain unchanged. For the saddle problem, Table~\ref{supp:tab:components} and~\eqref{supp:eq:record-momentum-i}--\eqref{supp:eq:record-momentum-H} implement these quantities with local records.

The data of Appendix~\ref{supp:sec:algorithm} satisfy this definition by Lemma~\ref{supp:lem:geometry},~\eqref{supp:eq:J-monotone}, and~\eqref{supp:eq:component-difference}. In this construction the implementation condition follows from Table~\ref{supp:tab:components} and the indexed updates in Algorithm~\ref{alg:epoch}. The proof below does not require the individual components to be monotone.

\subsection{A localized memory inequality}
Let $\mathscr F_t$ denote the history before the two samples in iteration $t$, including the current representation and table. Write $\E_t[\cdot]=\E[\cdot\mid\mathscr F_t]$. Within an iteration, $\E_{i}$ and $\E_{j}$ indicate averaging over the first and second fresh samples, with all previously specified information fixed. Conditional on $i$, the prediction is fixed and $j$ still has law $p$.

For $W\succeq0$, abbreviate $D_W(v,w)=\norm{v-w}_W^2/2$. Let $w_t=w(\theta_t)$, $\bar w_t=w(\bar\theta_t)$, and
\begin{equation}\label{supp:eq:potential}
 \Phi_t=D_r(w^\star,w_t)+\tfrac18\sum_kD_{D_k}(w^\star,v_{k,t}).
\end{equation}
The parameter identities are
\begin{equation}\label{supp:eq:identities}
 \frac{\eta^2c_k^2}{p_k}=\frac{p_k}{64},\qquad
 \sum_kD_k\preceq D.
\end{equation}
At a coincident initialization,
\begin{equation}\label{supp:eq:potential-init}
 \Phi_0\le\frac98D_r(w^\star,w_0).
\end{equation}
This is the point of local quadratic memories: the initial error is not duplicated once per component.

\begin{lemma}[Two-sample descent for nonlinear monotone sums]\label{supp:lem:descent}
For the admissible lifted data of Definition~\ref{supp:def:lifted-data}, condition on $\mathscr F_t$ in Algorithm~\ref{alg:epoch}. Then
\begin{align}\label{supp:eq:descent}
 \E_t\Phi_{t+1}
 &+\eta\E_t\bigl[D_r(w^\star,\bar w_t)+D_r(\bar w_t,w^\star)\bigr]
 +\tfrac12\E_tD_r(\bar w_t,w_t)\nonumber\\
 &+\tfrac1{32}\sum_kp_kD_{D_k}(w_t,v_{k,t})
 \le\Phi_t.
\end{align}
Only monotonicity of $\cJ$ is needed; the individual components need not be monotone or linear.
\end{lemma}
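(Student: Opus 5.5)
The plan is to run the standard mirror-prox/extragradient bookkeeping in the Bregman geometry of $r$, written in the gradient coordinate $\theta=\nabla r(w)$, and to attach a separate accounting for the quadratic anchor memories. Throughout I use the conceptual anchors $v_{k,t}$. With these, $g_k^t=\cJ_k(v_{k,t})$ and $d_t=D_{i_t}(w_t-v_{i_t,t})$.

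\textbf{Step 1: variational form of the two steps.} The predictor is equivalent to
\[
\nabla r(\bar w_t)-\nabla r(w_t)+\eta\nabla r(\bar w_t)+\eta g^t+\tfrac18 d_t=0 .
\]
Subtracting this from the corrector gives
\[
\nabla r(w_{t+1})-\nabla r(w_t)=-\eta\nabla r(\bar w_t)-\eta\widehat g_t-\tfrac18 d_t .
\]
I pair the corrector relation with $w_{t+1}-w^\star$ and the predictor relation with $\bar w_t-w_{t+1}$. Applying the Bregman three-point identity to both pairings yields
\[
D_r(w^\star,w_{t+1})\le D_r(w^\star,w_t)-D_r(\bar w_t,w_t)-D_r(w_{t+1},\bar w_t)-\eta\langle \nabla r(\bar w_t)+\widehat g_t,\bar w_t-w^\star\rangle-\tfrac18\langle d_t,\bar w_t-w^\star\rangle+\eta\langle \widehat g_t-g^t,\bar w_t-w_{t+1}\rangle .
\]
The implicit $\eta\nabla r(\bar w_t)$ term is the source of the $(1+\eta)$ factor; it may need to be regrouped slightly.

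\textbf{Step 2: take expectations.} Conditional on $\mathscr F_t$ and $i_t$, the estimator satisfies $\E_j\widehat g_t=\cJ(\bar w_t)$. Since $\nabla r(w^\star)+\cJ(w^\star)=0$ and $\cJ$ is monotone,
\[
\langle\nabla r(\bar w_t)+\cJ(\bar w_t),\bar w_t-w^\star\rangle\ge\langle\nabla r(\bar w_t)-\nabla r(w^\star),\bar w_t-w^\star\rangle=D_r(w^\star,\bar w_t)+D_r(\bar w_t,w^\star).
\]
This produces the symmetric-divergence term in the statement; only monotonicity of the sum $\cJ$ is used here.

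For the memory part of $\Phi$, the anchor $v_{k,t+1}$ equals $w_t$ with probability $p_k$ and $v_{k,t}$ otherwise. The polarization identity then gives
\[
\E_t\tfrac18\textstyle\sum_k D_{D_k}(w^\star,v_{k,t+1})-\tfrac18\sum_kD_{D_k}(w^\star,v_{k,t})=\tfrac18\sum_kp_k\bigl[-D_{D_k}(w_t,v_{k,t})+\langle D_k(w_t-v_{k,t}),w_t-w^\star\rangle\bigr].
\]
The inner-product term is exactly $\tfrac18\E_i\langle d_t,w_t-w^\star\rangle$. It cancels against the $-\tfrac18\langle d_t,\bar w_t-w^\star\rangle$ term from Step 1, leaving the residual $\tfrac18\langle d_t,\bar w_t-w_t\rangle$. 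I bound this residual by Cauchy--Schwarz and Young in the $D_{i_t}$ seminorm, using $D_{i_t}\preceq D$ and $D_r\ge\tfrac12\norm{\cdot}_D^2$. It is then absorbed by a fraction of $D_r(\bar w_t,w_t)$ and a fraction of $\sum_kp_kD_{D_k}(w_t,v_{k,t})$.

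\textbf{Step 3: the variance term (main obstacle).} I bound $\eta\langle\widehat g_t-g^t,\bar w_t-w_{t+1}\rangle$ by
\[
\eta^2\norm{\widehat g_t-g^t}_{D^{-1}}^2+\tfrac14\norm{\bar w_t-w_{t+1}}_D^2,
\]
and the second piece is absorbed by $D_r(w_{t+1},\bar w_t)$. For the first piece, importance sampling together with the variation bound of Definition~\ref{supp:def:lifted-data}(3) and the identity~\eqref{supp:eq:identities} give
\[
\E_j\eta^2\norm{\widehat g_t-g^t}_{D^{-1}}^2\le\textstyle\sum_k\frac{\eta^2c_k^2}{p_k}\norm{\bar w_t-v_{k,t}}_{D_k}^2=\frac1{64}\sum_kp_k\norm{\bar w_t-v_{k,t}}_{D_k}^2 .
\]
Splitting $\bar w_t-v_{k,t}=(\bar w_t-w_t)+(w_t-v_{k,t})$ turns this into two pieces. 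The first is controlled via $\sum_kp_kD_k\preceq D$ by a multiple of $D_r(\bar w_t,w_t)$. The second is controlled by the memory term $-\tfrac18\sum_kp_kD_{D_k}(w_t,v_{k,t})$ from Step 2.

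The delicate point is the constant accounting. The negative budgets $D_r(\bar w_t,w_t)$ and $\tfrac18\sum_kp_kD_{D_k}(w_t,v_{k,t})$ must absorb both the Step 2 residual and the Step 3 variance. After that they must still leave $\tfrac12D_r(\bar w_t,w_t)$ and $\tfrac1{32}\sum_kp_kD_{D_k}(w_t,v_{k,t})$ on the left of~\eqref{supp:eq:descent}. I would fix the Young weights first and then check them against the factors $1/8$, $1/64$ and $\eta=1/(8S)$. Adding the three steps and taking $\E_t$ then yields~\eqref{supp:eq:descent}.
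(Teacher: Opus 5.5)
Your argument is the paper's proof in different packaging, and the only thing missing is the constant bookkeeping you deferred. Your Step~1 identity is exactly \eqref{supp:eq:threepoint}, for two reasons. First, $\eta\ip{\widehat g_t-g^t}{\bar w_t-w_{t+1}}-D_r(w_{t+1},\bar w_t)=D_r(\bar w_t,w_{t+1})$. Second, $D_r(w^\star,w_t)-D_r(\bar w_t,w_t)-\tfrac18\ip{d_t}{\bar w_t-w^\star}=\mathcal A_{i_t}(w^\star)-\mathcal A_{i_t}(\bar w_t)$. Your polarization and anchor cancellation are the paper's Steps~3 and~5 combined. (The leftover term is actually $-\tfrac18\ip{d_t}{\bar w_t-w_t}$; the sign is harmless, since you bound its absolute value.)

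The constant accounting is tighter than it looks, and it does not close with your committed choices plus the natural symmetric split.

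\textbf{Where the factor is lost.} Your Young step, $\eta^2\norm{\cdot}_{D^{-1}}^2+\tfrac14\norm{\cdot}_D^2$, uses only half of $D_r(w_{t+1},\bar w_t)$. It therefore doubles the variance term relative to the paper's bound $\E_jD_r(\bar w,w^+)\le\tfrac{\eta^2}{2}\E_j\norm{\widehat g-g}_{D^{-1}}^2\le\tfrac1{64}\sum_kp_kD_{D_k}(\bar w,v_k)$.

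\textbf{Why the symmetric split fails.} Write your residual bound as $\tfrac\alpha8D_{D_{i}}(w,v_{i})+\tfrac1{8\alpha}D_r(\bar w,w)$, and split $\bar w-v_k=(\bar w-w)+(w-v_k)$ with weights $2,2$. The surviving coefficients are then $\tfrac18-\tfrac\alpha8-\tfrac1{16}$ on the memory term and $1-\tfrac1{8\alpha}-\tfrac1{16}$ on $D_r(\bar w,w)$. Reaching the stated $\tfrac1{32}$ and $\tfrac12$ requires both $\alpha\le\tfrac14$ and $\alpha\ge\tfrac27$, which is impossible.

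\textbf{Two repairs.}
\begin{enumerate}
\item Use Young with weight $\tfrac12$, so the quadratic piece is $\tfrac12\norm{\bar w-w^+}_D^2\le D_r(w^+,\bar w)$ and is absorbed entirely. This reproduces the paper's $\tfrac{\eta^2}{2}$. Then $\alpha=\tfrac12$ with the symmetric split gives exactly the paper's $\tfrac{23}{32}$ and $\tfrac1{32}$.
\item Keep your factor and split asymmetrically, for example $D_{D_k}(\bar w,v_k)\le3D_{D_k}(\bar w,w)+\tfrac32D_{D_k}(w,v_k)$ with $\alpha=\tfrac38$. This leaves memory coefficient exactly $\tfrac1{32}$ and movement coefficient $\tfrac{55}{96}\ge\tfrac12$.
\end{enumerate}
In both cases you still use $D_{i_t}\preceq D$ and $\sum_kp_kD_k\preceq D$ to pass from $D_{D_k}(\bar w,w)$ to $D_r(\bar w,w)$, as you planned.
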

\begin{proof}
The argument has five steps. We derive an exact three-point identity,
control the correction energy, bound the local-memory correction, use monotonicity,
and cancel the memory transfers. Suppress $t$ throughout and write
$g=\sum_k\cJ_k(v_k)$ for the cached sum $g^t$ of Algorithm~\ref{alg:epoch}.
Write
\[
 \vartheta=\nabla r(w)-\tfrac18D_{i}(w-v_{i}),\qquad
 \widehat g=g+\frac{\cJ_{j}(\bar w)-\cJ_{j}(v_{j})}{p_{j}}.
\]

\textbf{Step 1: an exact update identity.}
For the first sample $i$, define
\begin{equation}\label{supp:eq:A-function}
 \mathcal A_{i}(v)=D_r(v,w)-\tfrac18D_{D_{i}}(v,w)
                    +\tfrac18D_{D_{i}}(v,v_{i}).
\end{equation}
The quadratic parts in $v$ cancel, giving
$\nabla\mathcal A_{i}(v)=\nabla r(v)-\vartheta$.
The represented updates imply
\begin{equation}\label{supp:eq:update-identities}
 (1+\eta)\nabla r(\bar w)=\vartheta-\eta g,\qquad
 \nabla r(w^+)=\vartheta-\eta(\nabla r(\bar w)+\widehat g).
\end{equation}
Expanding the divergences, or using the three-point identity, gives the exact equality
\begin{align}\label{supp:eq:threepoint}
 D_r(w^\star,w^+)
 &=\mathcal A_{i}(w^\star)-\mathcal A_{i}(\bar w)
 -\eta\ip{\nabla r(\bar w)+\widehat g}{\bar w-w^\star}
 +D_r(\bar w,w^+).
\end{align}

\textbf{Step 2: correction energy.}
Conditional on $i$, the prediction is fixed and $j$ is independent. Hence
\begin{equation}\label{supp:eq:unbiased}
 \E_{j}[\widehat g\mid i]=\cJ(\bar w),\qquad
 \E_{j}\norm{\widehat g-g}_{D^{-1}}^2
 =\sum_k\frac{\norm{\cJ_k(\bar w)-\cJ_k(v_k)}_{D^{-1}}^2}{p_k}.
\end{equation}
Strong convexity of $r$ implies that $r^*$ has a $D^{-1}$ upper quadratic model. Also~\eqref{supp:eq:update-identities} gives
$\nabla r(\bar w)-\nabla r(w^+)=\eta(\widehat g-g)$.
Conjugate divergence duality therefore yields
\begin{align}\label{supp:eq:correction-bound}
 \E_{j}D_r(\bar w,w^+)
 &\le\frac{\eta^2}{2}\E_{j}\norm{\widehat g-g}_{D^{-1}}^2
 \le\eta^2\sum_k\frac{c_k^2}{p_k}D_{D_k}(\bar w,v_k)\nonumber\\
 &=\frac1{64}\sum_kp_kD_{D_k}(\bar w,v_k).
\end{align}
No linearity is used in~\eqref{supp:eq:unbiased} or~\eqref{supp:eq:correction-bound}.

The quadratic triangle inequality and $D_k\preceq D$ imply
\begin{equation}\label{supp:eq:compare-current}
 \E_{i,j}D_r(\bar w,w^+)
 \le\frac1{32}\sum_kp_kD_{D_k}(w,v_k)
       +\frac1{32}\E_{i}D_r(\bar w,w).
\end{equation}
This comparison at the old current point avoids asserting independence between $i$ and $\bar w$.

\textbf{Step 3: subtract the local-memory correction.}
For any $W\succeq0$, expansion and Young's inequality give
\begin{align}\label{supp:eq:quadratic-lower}
 D_W(\bar w,v)-D_W(\bar w,w)
 &=D_W(w,v)+\ip{W(w-v)}{\bar w-w}\nonumber\\
 &\ge\tfrac12D_W(w,v)-2D_W(\bar w,w).
\end{align}
Applying this to~\eqref{supp:eq:A-function} gives
\begin{equation}\label{supp:eq:momentum-bound}
 \E_{i}\mathcal A_{i}(\bar w)
 \ge\tfrac34\E_{i}D_r(\bar w,w)
     +\frac1{16}\sum_kp_kD_{D_k}(w,v_k).
\end{equation}
Subtracting~\eqref{supp:eq:compare-current} leaves a movement coefficient
$23/32\ge1/2$ and a mismatch coefficient $1/32$.

\textbf{Step 4: monotone drift.}
Because $\nabla r(w^\star)+\cJ(w^\star)=0$,
\begin{align}\label{supp:eq:monotone-drift}
 \ip{\nabla r(\bar w)+\cJ(\bar w)}{\bar w-w^\star}
 &=D_r(w^\star,\bar w)+D_r(\bar w,w^\star)\nonumber\\
 &\quad+\ip{\cJ(\bar w)-\cJ(w^\star)}{\bar w-w^\star}\nonumber\\
 &\ge D_r(w^\star,\bar w)+D_r(\bar w,w^\star).
\end{align}
For a linear skew operator the last pairing vanishes. For a nonlinear monotone operator it is nonnegative, which is sufficient and favorable in~\eqref{supp:eq:threepoint}.

\textbf{Step 5: memory cancellation.}
Finally,
\begin{align}\label{supp:eq:A-star}
 \E_{i}\mathcal A_{i}(w^\star)
 &=D_r(w^\star,w)-\tfrac18\sum_kp_kD_{D_k}(w^\star,w)
                 +\tfrac18\sum_kp_kD_{D_k}(w^\star,v_k),\\
 \E_{i}\tfrac18\sum_kD_{D_k}(w^\star,v_k^+)
 &=\tfrac18\sum_kD_{D_k}(w^\star,v_k)\nonumber\\
 &\quad+\tfrac18\sum_kp_k
  [D_{D_k}(w^\star,w)-D_{D_k}(w^\star,v_k)].\label{supp:eq:memory-cancel}
\end{align}
The update at the old current point makes the transferred terms cancel exactly. Taking expectations in~\eqref{supp:eq:threepoint}, applying the preceding inequalities, and adding~\eqref{supp:eq:memory-cancel} proves~\eqref{supp:eq:descent}.
\end{proof}

\subsection{Stage contraction and exact accounting}
\begin{theorem}[One stage for admissible lifted data]\label{supp:thm:epoch}
Under Definition~\ref{supp:def:lifted-data}, initialize every anchor at the input representation in Algorithm~\ref{alg:epoch}. Then
\begin{equation}\label{supp:eq:epoch-contract}
 \E\!\left[D_r(w^\star,w(\theta^{\out}))\mid\theta^{\inpt}\right]
 \le\tfrac12D_r(w^\star,w(\theta^{\inpt})).
\end{equation}
For $E$ stages with independent fresh samples, let $N_k$ count all evaluations of component $k$, including the initial component evaluations. Then
\begin{equation}\label{supp:eq:count-general}
 \E N_k=E(1+2p_kK)\le(3+36c_k)E.
\end{equation}
The contraction and all count identities hold for the same run.
\end{theorem}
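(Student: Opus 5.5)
The plan is to sum the one-step descent inequality of Lemma~\ref{supp:lem:descent} over the $K$ inner iterations and then use the uniformly random output index $\tau$ to turn the accumulated predictor terms into a bound on $\E D_r(w^\star,\bar w_\tau)$.

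\textbf{Telescoping.} Taking full expectations in~\eqref{supp:eq:descent} and summing over $t=0,\ldots,K-1$, we drop the nonnegative terms $\tfrac12 D_r(\bar w_t,w_t)$, $\tfrac1{32}\sum_k p_kD_{D_k}(w_t,v_{k,t})$ and $D_r(\bar w_t,w^\star)$. We also drop $\E\Phi_K\ge0$, which requires the memory part of $\Phi$ to be nonnegative, and it is since $D_k\succeq0$. This gives
\[
 \eta\sum_{t=0}^{K-1}\E D_r(w^\star,\bar w_t)\le\Phi_0\le\tfrac98D_r(w^\star,w_0),
\]
where the last inequality is~\eqref{supp:eq:potential-init}. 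It holds because every anchor is initialized at $w_0$, so $\Phi_0=D_r(w^\star,w_0)+\tfrac18\sum_kD_{D_k}(w^\star,w_0)$. Combining $\sum_kD_k\preceq D$ with item~1 of Definition~\ref{supp:def:lifted-data} bounds the memory sum by $\tfrac18\cdot\tfrac12\norm{w^\star-w_0}_D^2\le\tfrac18D_r(w^\star,w_0)$.

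\textbf{Random output.} The index $\tau$ is drawn independently of the samples, so $\E D_r(w^\star,\bar w_\tau)=\frac1K\sum_t\E D_r(w^\star,\bar w_t)\le\frac{9}{8\eta K}D_r(w^\star,w_0)$. Substituting $\eta=1/(8S)$ and $K\ge18S$ gives $\eta K\ge 18/8$, so the factor is at most $\frac98\cdot\frac{8}{18}=\frac12$. This is~\eqref{supp:eq:epoch-contract}, since $\theta^{\out}=\bar\theta_\tau$ and $\bar w_\tau=w(\bar\theta_\tau)$.

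\textbf{Counting.} Each stage evaluates every component once at initialization and performs two evaluations per iteration, one for $i_t$ and one for $j_t$. Each of these draws equals $k$ with probability $p_k$, so the expected count of component $k$ per stage is $1+2p_kK$, independently of the iterates. Summing over $E$ stages gives $E(1+2p_kK)$. For the bound, $K\le18S+1$ and $p_k=c_k/S$ give $2p_kK\le36c_k+2c_k/S\le36c_k+2$, because $c_k\le S$. Hence $\E N_k\le(3+36c_k)E$.

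All statements concern the same run, since the counts are expectations of sampling frequencies and need no independence from the accuracy. The only delicate point is checking the initialization bound on $\Phi_0$ and the nonnegativity of the dropped terms; the rest is arithmetic with the parameter choices.
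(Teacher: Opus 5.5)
Your proposal is correct and follows essentially the same route as the paper: you sum the descent inequality of Lemma~\ref{supp:lem:descent} over the $K$ iterations, average over the uniformly drawn output index using $\eta K\ge 18/8$, and count $1+2p_kK$ evaluations per stage with $K\le 18S+1$ and $p_k\le 1$. Your explicit derivation of~\eqref{supp:eq:potential-init} from $\sum_kD_k\preceq D$ and item~1 of Definition~\ref{supp:def:lifted-data}, and your check that the dropped terms are nonnegative, spell out steps the paper only cites.
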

\begin{proof}
Summing Lemma~\ref{supp:lem:descent} over the deterministic $K$ iterations and using~\eqref{supp:eq:potential-init} gives
\[
 \eta\sum_{t=0}^{K-1}\E D_r(w^\star,\bar w_t)
 \le\frac98D_r(w^\star,w_0).
\]
The output index is uniform and does not affect the update sequence. Hence its expected divergence is at most $9/(8\eta K)$ times the input divergence. Since $\eta K\ge18/8$, this factor is at most $1/2$.

Each stage initializes every component once. Each of the two samples in an iteration has marginal law $p$ and causes one component evaluation, so its expected contribution to component $k$ is $2p_k$. Therefore $\E N_k=E(1+2p_kK)$. Finally,
\[
 1+2p_kK\le1+36c_k+2p_k\le3+36c_k,
\]
because $K\le18S+1$ and $p_k\le1$.
\end{proof}

\subsection{Explicit oracle complexity bounds}\label{supp:sec:main}\label{supp:sec:upper-proof}
\begin{theorem}[Simultaneous partial-gradient bounds under strong convexity of $f$ and $g$]\label{supp:thm:strong}
Under Assumptions~\ref{ass:players}--\ref{ass:coupling}, with strong convexity of $f$ and $g$, Algorithm~\ref{supp:alg:strong} with the allocation in~\eqref{supp:eq:dense-weights} and one coupling component per partial gradient attains~\eqref{eq:main-target}
for $0<\varepsilon\le1/2$, using
\begin{equation}\label{supp:eq:strong-epoch-budget}
 \left\lceil\log_2\frac{2\kappa_{\max}}{\varepsilon}\right\rceil
 \quad\text{stages}.
\end{equation}
In the same run, for all $1\le i\le m$
and $1\le j\le q$,
\begin{align}
 \E N_i^f
 &\le75\sqrt{\kappa_i^f}
       \left\lceil\log_2\frac{2\kappa_{\max}}{\varepsilon}\right\rceil,
                  \label{supp:eq:strong-gradient}\\
 \E N_j^g
 &\le75\sqrt{\kappa_j^g}
       \left\lceil\log_2\frac{2\kappa_{\max}}{\varepsilon}\right\rceil,
                  \label{supp:eq:strong-gradient-g}\\
 \E N_{H,i}^x
 &\le\left(39+72\sqrt2\,\kappa_{H,i}^x\sqrt{\Delta_{H,i}^x}\right)
       \left\lceil\log_2\frac{2\kappa_{\max}}{\varepsilon}\right\rceil,
                  \label{supp:eq:coupling-block-strong-x}\\
 \E N_{H,j}^y
 &\le\left(39+72\sqrt2\,\kappa_{H,j}^y\sqrt{\Delta_{H,j}^y}\right)
       \left\lceil\log_2\frac{2\kappa_{\max}}{\varepsilon}\right\rceil.
                  \label{supp:eq:coupling-block-strong-y}
\end{align}
For declared zero rows, $\Delta=1$ and the coupling product is zero;
the same algorithm and bounds apply. All initialization, update,
and correction calls are included. All expected-count inequalities
concern the same output and run; they do not assert pathwise bounds
on the realized counters.
The objective gradient bounds acquire no coupling-overlap factor.
The coupling-block counters need not be equal, since each derivative
has its own sampling probability.
\end{theorem}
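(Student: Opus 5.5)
The plan has two parts. First, iterate the one-stage contraction of Theorem~\ref{supp:thm:epoch} across the stages of Algorithm~\ref{supp:alg:strong}. Then translate the lifted Bregman divergence into the normalized distance at both ends, and read off the counts from~\eqref{supp:eq:count-general} using the explicit $c_k$ of the default allocation~\eqref{supp:eq:dense-weights}.

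\textbf{Step 1 (admissibility).} The data of Appendix~\ref{supp:sec:algorithm} satisfy Definition~\ref{supp:def:lifted-data}. Condition~1 is Lemma~\ref{supp:lem:geometry}. Condition~2 is~\eqref{supp:eq:J-monotone} together with $\nabla r(w^\star)+\cJ(w^\star)=0$. Condition~3 is~\eqref{supp:eq:component-difference}. Feasibility of the allocation is~\eqref{supp:eq:dense-allocation}.

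\textbf{Step 2 (iterating the contraction).} By the tower property over independent stages, $\E D_r(w^\star,w(\theta^{(E)}))\le 2^{-E}D_r(w^\star,w(\theta^{(0)}))$.

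\textbf{Step 3 (comparing divergences).} For the upper end, I will use the initialization $\theta^{(0)}=(\widehat z^0/2,\widehat z^0)$, so that $a=\widehat z^0$ and $u^0=\nabla\phi(\widehat z^0)$. The lifted divergence then splits as
\[
D_r(w^\star,w^0)=\tfrac14\norm{\widehat z^\star-\widehat z^0}^2+D_{\phi^*}(\nabla\phi(\widehat z^\star),\nabla\phi(\widehat z^0)).
\]
By conjugate duality, the second term equals $D_\phi(\widehat z^0,\widehat z^\star)$. Since $\phi$ has upper model $\Lambda\preceq\kappa_{\max}I$, this is at most $\tfrac12\kappa_{\max}\norm{\widehat z^0-\widehat z^\star}^2$. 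The total is therefore at most $\kappa_{\max}\norm{\widehat z^0-\widehat z^\star}^2$, using $\tfrac14+\tfrac12\kappa_{\max}-\tfrac14\le\kappa_{\max}$ since $\kappa_{\max}\ge1$. For the lower end, only the $z$-part is needed: $D_r(w^\star,w)\ge\tfrac12\norm{w^\star-w}_D^2\ge\tfrac14\norm{\widehat z^\star-\widehat z}^2$, where $\widehat z=2\theta_z$ is exactly the output. Combining the two ends gives
\[
\E\norm{z^{\out}-z^\star}_M^2\le 4\kappa_{\max}2^{-E}\norm{z^0-z^\star}_M^2.
\]

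\textbf{Main obstacle: the constants in Step 3.} The stated stage budget $E=\lceil\log_2(2\kappa_{\max}/\varepsilon)\rceil$ only yields the factor $2\kappa_{\max}2^{-E}\le\varepsilon$. So the upper estimate must be sharpened to $D_r(w^\star,w^0)\le\tfrac12\kappa_{\max}\norm{\widehat z^0-\widehat z^\star}^2$, possibly paired with a tighter lower comparison. I would try to get this by computing $D_r$ along the specific initialization more carefully, since the $\tfrac14\norm{z}^2$ pieces partly cancel against the $-\tfrac14\norm{z}^2$ inside $\phi$. Concretely, I would check whether $D_{\phi}(\widehat z^0,\widehat z^\star)\le\tfrac12(\kappa_{\max}-\tfrac12)\norm{\cdot}^2$ exactly absorbs the extra $\tfrac14$. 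I would also try exploiting the full $D$-norm on the lower side, which includes the $u$-component controlled through strong convexity of $\phi^*$ in $\Lambda^{-1}$, to pick up the missing factor of two. If neither works, the fallback is the one-stage-larger budget.

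\textbf{Step 4 (counts).} By~\eqref{supp:eq:count-general}, $\E N_k\le(3+36c_k)E$. For objective blocks, $c_\iota=\sqrt{4\kappa_\iota-2}\le2\sqrt{\kappa_\iota}$, so $3+72\sqrt{\kappa_\iota}\le75\sqrt{\kappa_\iota}$ because $\kappa_\iota\ge1$. For coupling rows, $c=\max\{1,2\sqrt2\kappa_H\sqrt\Delta\}\le1+2\sqrt2\kappa_H\sqrt\Delta$. This gives $3+36+72\sqrt2\kappa_H\sqrt\Delta$, which is~\eqref{supp:eq:coupling-block-strong-x}--\eqref{supp:eq:coupling-block-strong-y}. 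Zero rows get $39E$. Each component evaluation is exactly one original oracle call by Table~\ref{supp:tab:components}, so $N_k$ equals the corresponding original counter, and all bounds refer to the same run.
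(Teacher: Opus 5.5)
Your route is the paper's. You iterate the stage contraction of Theorem~\ref{supp:thm:epoch}, convert the lifted divergence to normalized distance using $D_r(w^\star,w(\theta))=\tfrac14\norm{z-z^\star}^2+D_\phi(a,z^\star)$, and read the counts off~\eqref{supp:eq:count-general} with the default $c_k$. Steps 1, 2 and 4 are fine.

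As written, however, Step 3 gives $\E\norm{z^{\out}-z^\star}_M^2\le 4\kappa_{\max}2^{-E}\norm{z^0-z^\star}_M^2$. With the stated budget that yields accuracy $2\varepsilon$, so the theorem is not yet proved. Your fallback of one extra stage does not rescue it, because the budget~\eqref{supp:eq:strong-epoch-budget} is part of the claim.

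The cancellation you suspected is exactly the missing step, and it is immediate. Lemma~\ref{supp:lem:geometry} gives $\phi$ the upper model $\Lambda=\diag((\kappa_\iota-\tfrac12)I)\preceq(\kappa_{\max}-\tfrac12)I$. Hence $D_\phi(\widehat z^0,\widehat z^\star)\le\tfrac12(\kappa_{\max}-\tfrac12)\norm{\widehat z^0-\widehat z^\star}^2$, and the total lifted divergence is at most $\tfrac12\kappa_{\max}\norm{\widehat z^0-\widehat z^\star}^2$. The expression $\tfrac14+\tfrac12\kappa_{\max}-\tfrac14$ in your Step 3 already equals $\tfrac12\kappa_{\max}$; bounding it by $\kappa_{\max}$ is what costs the factor of two.

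The paper states this even more directly. At $a=z=\widehat z^0$ one has $D_\phi(\widehat z^0,\widehat z^\star)=D_h(\widehat z^0,\widehat z^\star)-\tfrac14\norm{\widehat z^0-\widehat z^\star}^2$. Therefore $D_r(w^\star,w^0)=D_h(\widehat z^0,\widehat z^\star)\le\tfrac12\kappa_{\max}\norm{\widehat z^0-\widehat z^\star}^2$ by~\eqref{supp:eq:normalized-model}.

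No sharper lower comparison and no use of the $u$-part of the $D$-norm is needed. Your bound $D_r(w^\star,w)\ge\tfrac14\norm{\widehat z-\widehat z^\star}^2$ then gives $\E\norm{z^{\out}-z^\star}_M^2\le2\kappa_{\max}2^{-E}\norm{z^0-z^\star}_M^2\le\varepsilon\norm{z^0-z^\star}_M^2$, as required.
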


\begin{proof}[Proof of Theorem~\ref{supp:thm:strong}]
For any representation, conjugate duality gives
\begin{equation}\label{supp:eq:physical-divergence}
 D_r(w^\star,w(\theta))
 =\tfrac14\norm{z-z^\star}^2+D_\phi(a,z^\star).
\end{equation}
Retaining both $z$ and $a$ between stages preserves this potential;
resetting $a=z$ changes it. At $a=z=z^0$, it is $D_h(z^0,z^\star)$, at most $\kappa_{\max}\norm{z^0-z^\star}^2/2$. Iterating the conditional stage contraction therefore yields
\[
 \E\norm{z^{\out}-z^\star}^2
 \le2\kappa_{\max}2^{-\bigl\lceil\log_2(2\kappa_{\max}/\varepsilon)\bigr\rceil}\norm{z^0-z^\star}^2
 \le\varepsilon\norm{z^0-z^\star}^2.
\]
Undoing the block scaling proves the distance target.

Apply~\eqref{supp:eq:count-general} with the stage budget~\eqref{supp:eq:strong-epoch-budget}.
For an objective component, $c_\iota=\sqrt{4\kappa_\iota-2}\le2\sqrt{\kappa_\iota}$ and $3+72\sqrt{\kappa_\iota}\le75\sqrt{\kappa_\iota}$. For a coupling partial, $c_{(H,x,i)}=\max\{1,2\sqrt2\kappa_{H,i}^x\sqrt{\Delta_{H,i}^x}\}$, with the analogous dual expression. Using $\max\{1,b\}\le1+b$ gives the displayed coefficient $39+72\sqrt2\kappa_{H,i}^x\sqrt{\Delta_{H,i}^x}$. Each component call evaluates exactly its corresponding derivative once, including for constant rows. This proves Theorem~\ref{supp:thm:strong}.

\end{proof}

For the saddle implementation, the component counts satisfy, pathwise,
\[
 N_i^f=N_{(f,i)}\quad(1\le i\le m),\qquad
 N_j^g=N_{(g,j)}\quad(1\le j\le q),\qquad
 N_{H,i}^x=N_{(H,x,i)},\qquad
 N_{H,j}^y=N_{(H,y,j)}.
\]
These identities apply to every row. Each component evaluation has
one specified original partial-gradient cost.

For any other feasible allocation, put
$E=\lceil\log_2(2\kappa_{\max}/\varepsilon)\rceil$. Then
\begin{equation}\label{supp:eq:work-upper}
 \E W_\tau\le E\sum_{k\in\cI}\tau_k(3+36c_k).
\end{equation}
Here $\tau_{(H,x,i)}=\tau_{H,i}^x$ and
$\tau_{(H,y,j)}=\tau_{H,j}^y$. The sum includes all objective and coupling components, with all
stage initializations and sampled calls charged.
Reducing the lifted divergence by a factor $\varepsilon$ needs only
$\lceil\log_2(1/\varepsilon)\rceil$ stages; converting to relative
original-variable distance introduces the $\kappa_{\max}$ factor above.

\paragraph{Weighted work.}
Multiplying the simultaneous count bounds by any fixed positive costs and adding gives
\begin{equation}\label{supp:eq:work-main}
 \begin{aligned}
 \E W_\tau\le{}&\left\lceil\log_2\frac{2\kappa_{\max}}{\varepsilon}\right\rceil
 \biggl[75\sum_i\tau_i^f\sqrt{\kappa_i^f}+75\sum_j\tau_j^g\sqrt{\kappa_j^g}\\
 &\quad{}+\sum_i\tau_{H,i}^x(39+72\sqrt2 \kappa_{H,i}^x\sqrt{\Delta_{H,i}^x})\\
 &\quad{}+\sum_j\tau_{H,j}^y(39+72\sqrt2 \kappa_{H,j}^y\sqrt{\Delta_{H,j}^y})\biggr].
 \end{aligned}
\end{equation}

\section{Coupling examples and assembly}\label{supp:sec:structure}

\subsection{Bounds from original coupling data}
If $H$ is twice continuously differentiable, uniform Hessian-block bounds
\[
 \begin{gathered}
 \norm{\nabla_{x_t}(\nabla_{x_i}H)}_\op\le L_{it}^{H,xx},\qquad
 \norm{\nabla_{y_j}(\nabla_{x_i}H)}_\op\le L_{ij}^{H,xy},\\
 \norm{\nabla_{x_i}(\nabla_{y_j}H)}_\op\le L_{ji}^{H,yx},\qquad
 \norm{\nabla_{y_u}(\nabla_{y_j}H)}_\op\le L_{ju}^{H,yy}
 \end{gathered}
\]
certify~\eqref{eq:main-coupling-model} by integration along successive
input blocks. Mixed-block bounds may be chosen with
$L_{ji}^{H,yx}=L_{ij}^{H,xy}$. Lemma~\ref{supp:lem:coupling-normalization}
then gives the normalized row and full-operator bounds.

For the general bilinear coupling
$H(x,y)=\sum_{j,i}\ip{y_j}{B_{ji}x_i}$, supplied scalar bounds
$\norm{B_{ji}}_\op\le C_{ji}$ certify
$L_{it}^{H,xx}=L_{ju}^{H,yy}=0$ and
$L_{ij}^{H,xy}=L_{ji}^{H,yx}=C_{ji}$. Consequently,
\[
 \kappa_{H,i}^x=\left[\sum_j\frac{C_{ji}^2}{\mu_i^f\mu_j^g}\right]^{1/2},\qquad
 \kappa_{H,j}^y=\left[\sum_i\frac{C_{ji}^2}{\mu_j^g\mu_i^f}\right]^{1/2}.
\]
For a single edge both reduce to $C_{ji}/\sqrt{\mu_i^f\mu_j^g}$,
recovering the bilinear curvature normalization. These computations
use only declared scalar parameters, not unknown matrix entries.

\subsection{A dense nonlinear interaction}
\begin{example}[A dense nonlinear coupling]\label{supp:ex:dense}
Let $a,b$ be unit vectors with a nonzero component in every declared
block, and $0<\beta\le1$. Consider
\begin{equation}\label{supp:eq:dense-nonlinear-coupling}
 H(x,y)=\tfrac12\norm{x}^2-\tfrac12\norm{y}^2
       +\beta\sin(a^\top x)\sin(b^\top y).
\end{equation}
Writing $s=\sin(a^\top x)\sin(b^\top y)$, its Hessians with respect
to $x$ and $y$ separately are $I-\beta s aa^\top\succeq0$ and
$-I-\beta s bb^\top\preceq0$. Thus it is globally smooth and
convex--concave, with a nonconstant mixed Hessian. For example,
$\nabla_{x_i}H=x_i+\beta a_i\cos(a^\top x)\sin(b^\top y)$
depends on every primal and dual input. All $m+q$ coupling rows
therefore use all inputs: every multiplicity and row overlap is $m+q$.

Set $\alpha_i=\norm{a_i}$ and $\eta_j=\norm{b_j}$.
Valid original-coordinate Lipschitz arrays are
\[
 \begin{aligned}
 L_{it}^{H,xx}&=\mathbf1_{\{i=t\}}+\beta\alpha_i\alpha_t,&
 L_{ij}^{H,xy}=L_{ji}^{H,yx}&=\beta\alpha_i\eta_j,\\
 L_{ju}^{H,yy}&=\mathbf1_{\{j=u\}}+\beta\eta_j\eta_u.
 \end{aligned}
\]
Then $\mathsf L_H=I+\beta cc^\top$ for $c=(\alpha,\eta)$, with $\norm{c}^2=2$. In the identity metric,~\eqref{supp:eq:coupling-derived-full} gives $L_H=1+2\beta$. This is a globally smooth convex--concave instance covered by the main theorem, with no restriction on the number of blocks.

It is not generally a sum of pairwise block interactions plus functions of $x$ alone or $y$ alone. For example, with scalar blocks and distinct primal indices $i,t$ and a dual index $j$, the mixed third derivative is
\[
 \partial_{x_i}\partial_{x_t}\partial_{y_j}H
 =-\beta a_ia_tb_j\sin(a^\top x)\cos(b^\top y),
\]
which is not identically zero. Such a derivative vanishes for a sum of functions each depending on just one primal and one dual block, even after adding arbitrary functions of $x$ alone or $y$ alone.

\end{example}

\subsection{Partial sampling and full assembly}\label{supp:sec:coupling-blocks}
We compare partial sampling from Theorem~\ref{supp:thm:strong} with
full assembly using the same separately charged coupling partials.
Both retain the same objective gradient guarantees.

\subsubsection{Full-assembly guarantee}\label{supp:sec:assembly-guarantee}
The global normalized coupling bound is
$L_H=\norm{\mathsf A_H}_\op$, as defined
in~\eqref{supp:eq:coupling-derived-full}.
For every $L_H\ge0$, retain the objective components with
$t_\iota=1/2$ and set
\[
 \cJ_H(w)=(T_H(z),0),\qquad D_H=\diag(I/4,0),\qquad c_H=\max\{1,2\sqrt2L_H\}.
\]
The full Lipschitz bound gives
\[
 \norm{\cJ_H(w)-\cJ_H(v)}_{D^{-1}}^2
 \le2L_H^2\norm{z-\widetilde z}^2
 \le c_H^2\norm{w-v}_{D_H}^2.
\]
The objective metrics use half of $D$; adding $D_H$ preserves
$\sum_kD_k\preceq D$. The sum remains the monotone lifted operator
$\cJ$, so Definition~\ref{supp:def:lifted-data} holds.
A component evaluation assembles all $m+q$ separately charged partials
at the same point and records the full $z$; its local-memory vector is
$((z-z^{\rm anc})/4,0)$. No additional oracle is supplied.
Theorem~\ref{supp:thm:epoch} and the distance conversion in
\eqref{supp:eq:physical-divergence} therefore give
\begin{equation}\label{supp:eq:coupling-block-assembly}
 \E N_{H,i}^x,\ \E N_{H,j}^y
 \le(39+72\sqrt2L_H)
       \left\lceil\log_2\frac{2\kappa_{\max}}{\varepsilon}\right\rceil.
\end{equation}
All coupling counters equal the assembly count pathwise, so one
assembly costs $m+q$ partial calls. The two implementations give
alternative guarantees; selecting the smaller certificate for each
component does not establish a run attaining all those minima.

\subsubsection{Choosing by certified weighted work}\label{supp:sec:weighted-comparison}
Let $E=\lceil\log_2(2\kappa_{\max}/\varepsilon)\rceil$ and define
\[
 \begin{aligned}
 C_\tau&=\sum_i\tau_{H,i}^x+\sum_j\tau_{H,j}^y,\\
 R_\tau&=\sum_i\tau_{H,i}^x\kappa_{H,i}^x\sqrt{\Delta_{H,i}^x}
          +\sum_j\tau_{H,j}^y\kappa_{H,j}^y\sqrt{\Delta_{H,j}^y}.
 \end{aligned}
\]
Empty supports have overlap one, so zero-row products vanish.
The displayed numerical coupling-work certificates are
$E(39C_\tau+72\sqrt2R_\tau)$ for partial sampling and
$E(39C_\tau+72\sqrt2L_HC_\tau)$ for assembly. Thus sampling has the
smaller or equal certificate exactly when
\begin{equation}\label{supp:eq:weighted-assembly-criterion}
 \sum_i\tau_{H,i}^x\kappa_{H,i}^x\sqrt{\Delta_{H,i}^x}
 +\sum_j\tau_{H,j}^y\kappa_{H,j}^y\sqrt{\Delta_{H,j}^y}
 \le L_H\left(\sum_i\tau_{H,i}^x+\sum_j\tau_{H,j}^y\right).
\end{equation}
The objective certificates agree and cancel in this comparison.
At $L_H=0$ both displayed certificates agree; neither implementation
requires a special constant-row branch. This criterion
compares these upper bounds, not actual counts, runtimes, or an optimal
policy. Known costs can select one complete implementation before the run.

\begin{example}[An unequal-cost comparison]\label{supp:ex:weighted-cost}
For $M=I$ and $H=100x_1y_1+x_2y_2$, the row bounds are
$(100,1,100,1)$, $\Delta=1$, and $L_H=100$. Give each strong-edge
partial unit cost and each weak-edge partial cost $100$. Then
$C_\tau=202$ and $R_\tau=400$, so the coupling-work certificates are
about $48{,}607E$ for sampling and $2{,}064{,}710E$ for assembly,
with identical objective bounds. These are upper bounds on charged
work, not runtime measurements.
\end{example}

\subsubsection{A dense nonlinear reversal}\label{supp:sec:dense-sparse-examples}
Specialize Example~\ref{supp:ex:dense} to $\beta=1$, identity curvature,
$m=q=p$ scalar blocks, and $|a_i|=|b_j|=1/\sqrt p$. Write $n=2p$.
The declared certificate is $\mathsf L_H=I+cc^\top$ with
$c_k^2=2/n$ and $\norm c^2=2$, so $L_H=3$.
Every row has full support, hence $\Delta=n$, and its squared norm is
\[
 1+2c_k^2+c_k^2\norm c^2=1+8/n.
\]
Consequently, every sampled coupling coefficient is
$\kappa_{H,k}\sqrt\Delta=\sqrt{n+8}$, where $k$ indexes the $n$ primal and
dual rows. It exceeds the assembled coefficient $L_H=3$ whenever $n>1$.
For any positive coupling costs, assembly therefore has the smaller
displayed coupling-work certificate. In particular, $p\ge2$ retains the
dense nonlinear interaction beyond pairwise block sums.

\section{The lower-bound construction}\label{supp:sec:lower}
\subsection{Strongly convex lower statements and comparison}
Fix the original-coordinate coupling arrays and let $\mathcal E$ be a matching of primal--dual pairs: each primal and dual block belongs to at most one pair. For $e=(i,j)$, write $c_e=c_{ij}$ for the admissible normalized chain strength in~\eqref{eq:main-edge-strength}. In scalar-array notation, it is the minimum of $(\mathsf A_H)_{i,m+j}$ and $(\mathsf A_H)_{m+j,i}$.
Using their minimum respects both cross-derivative bounds, including asymmetric declarations. The matching may be empty.

\begin{theorem}[One blockwise hard instance for prescribed arrays and costs]\label{supp:thm:strong-lower}
Fix $m,q\ge1$, a prescribed-array class $\mathfrak C_{\rm arr}(M,L,\mathsf L_H)$ of Definition~\ref{supp:def:strong-class}, a matching $\mathcal E$, positive costs in~\eqref{eq:main-work}, $R>0$, and $0<\varepsilon\le1/128$. Suppose
\begin{equation}\label{supp:eq:lower-regime}
 \kappa_i^f\ge4\ (1\le i\le m),\qquad
 \kappa_j^g\ge4\ (1\le j\le q),\qquad c_e\ge2\ (e\in\mathcal E).
\end{equation}
In sufficiently large finite block dimensions there exists one quadratic bilinear instance in this prescribed-array class, with $z^0=0$ and $\norm{z^\star}_M^2=R^2$, on which every algorithm of Definition~\ref{supp:def:span} attaining~\eqref{eq:main-target} obeys
\begin{equation}\label{supp:eq:lower-work-strong}
 \E W_\tau\ge\frac18
 \left[\sum_{i=1}^m\tau_i^f\sqrt{\kappa_i^f}
       +\sum_{j=1}^q\tau_j^g\sqrt{\kappa_j^g}
 +\sum_{(i,j)\in\mathcal E}(\tau_{H,i}^x+\tau_{H,j}^y)c_{ij}\right]
 \log\frac1{32\varepsilon}.
\end{equation}
The instance is independent of the realized random seed but may depend on the prescribed parameters, costs, radius, and accuracy. The individual statements have separate quantifiers: for each fixed $i\in\{1,\ldots,m\}$ there is an instance requiring the first bound below, and for each fixed $j\in\{1,\ldots,q\}$ there is an instance requiring the second:
\begin{align}
 \E N_i^f&\ge\frac{\sqrt{\kappa_i^f}}8\log\frac1{32\varepsilon},\label{supp:eq:lower-individual-strong}\\
 \E N_j^g&\ge\frac{\sqrt{\kappa_j^g}}8\log\frac1{32\varepsilon}.\label{supp:eq:lower-individual-strong-g}
\end{align}
These individual instances need not coincide. For each $e=(i,j)\in\mathcal E$, there is also an instance in the same prescribed-array class, with only this coupling pair active, on which both
\begin{equation}\label{supp:eq:lower-H-strong}
 \E N_{H,i}^x\ge\frac{c_{ij}}8\log\frac1{32\varepsilon},\qquad
 \E N_{H,j}^y\ge\frac{c_{ij}}8\log\frac1{32\varepsilon}
\end{equation}
hold simultaneously. Instances for different edges need not coincide. All declared coefficients are retained, including those unused by the hard function.
\end{theorem}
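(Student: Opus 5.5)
The plan is to build one block-diagonal quadratic bilinear instance that decouples into independent subproblems. Each unmatched primal block, each unmatched dual block, and each matched pair $e=(i,j)\in\mathcal E$ will carry its own hard chain. In the span model these subproblems do not exchange information, because their coupling is zero and the objective is separable. Calls to the oracle of one block therefore never enlarge the span of another block. On this instance $\norm{z-z^\star}_M^2$ splits into a sum over subproblems. I will allot each subproblem a share of $R^2$ and a per-subproblem accuracy, chosen so that the global relative target~\eqref{eq:main-target} forces each subproblem to reach relative accuracy about $32\varepsilon$ on its own share. This is the source of the $\log(1/(32\varepsilon))$ factor and of the constant $1/8$. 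Weighted work is linear in the counters, so the cost-weighted bound~\eqref{supp:eq:lower-work-strong} will follow by summing the individual lower bounds over subproblems. The same decoupled instance serves all terms.

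For an objective block $\iota$ I would use a Nesterov-type tridiagonal quadratic inside $x_i$ (or $y_j$), scaled so that $\mu_\iota I\preceq\nabla^2\preceq L_\iota I$. Its span grows by one coordinate per oracle call, and after $k$ calls the error is at least $\bigl((\sqrt{\kappa_\iota}-1)/(\sqrt{\kappa_\iota}+1)\bigr)^{2k}$ times the initial error. The condition $\kappa_\iota\ge4$ lets me bound the log of this ratio by roughly $4/\sqrt{\kappa_\iota}$ and extract $\sqrt{\kappa_\iota}/8$. For a matched edge I would make $f,g$ on $x_i,y_j$ pure multiples $\mu I$, so they contribute no span growth, and put the chain into $B_{ji}$: a bidiagonal matrix with $\norm{B_{ji}}_\op\le\min\{L^{H,xy}_{ij},L^{H,yx}_{ji}\}$. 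Span growth then alternates. A call to $\nabla_{x_i}H$ returns $B_{ji}^\top y_j$, and a call to $\nabla_{y_j}H$ returns $B_{ji}x_i$, so each coupling call advances the corresponding block by at most one coordinate. The standard bilinear strongly-convex-strongly-concave hard instance (as in \citet{Thekumparampil2022} or \citet{OuyangXu2018}) has a solution that decays geometrically with ratio about $1-1/c_{ij}$, using $c_{ij}\ge2$. This gives both bounds in~\eqref{supp:eq:lower-H-strong} simultaneously, because each endpoint block needs its own oracle calls to advance. All other declared coefficients are kept as unattained upper bounds, so the instance stays in $\mathfrak C_{\rm arr}$.

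For randomized algorithms with random stopping, I will argue pathwise. On every run, the output block lies in the span after $N$ calls, so its error is at least $\rho^{N}\cdot$(initial share). Taking expectations and applying Jensen's inequality to the convex map $N\mapsto\rho^{N}$ gives $\E\rho^N\ge\rho^{\E N}$, which turns the target into a lower bound on $\E N$. Because the instance is fixed before the seed is drawn, the argument is independent of the seed. Finite truncation of the chains, with dimension depending on $\varepsilon$, handles boundary effects; that is where the requirement of sufficiently large dimensions enters. For the separate individual statements I can simply take single-block instances that put all of $R^2$ on block $i$.

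I expect the main obstacle to be the shared-accuracy bookkeeping in the weighted bound. The global target constrains only the sum of errors, so the algorithm could neglect a cheap block and over-solve the others. I would first try choosing the shares of $R^2$ equal in relative terms, so that each subproblem's error is a fixed fraction of the total. A Jensen/convexity argument over the sum $\sum_s w_s\rho_s^{N_s}\le\varepsilon\sum_s w_s$ then forces each $\rho_s^{\E N_s}\lesssim 32\varepsilon$ only after losing a constant. If that step fails, I would fall back on cost-dependent weights, which the statement permits, placing almost all mass on the subproblem that dominates the weighted sum. This would recover each term up to the stated constant.
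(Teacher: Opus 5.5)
Your per-chain ingredients are the right ones: tridiagonal objective chains, a bidiagonal coupling chain whose two supports are controlled by $1+\min\{N_{H,i}^x,N_{H,j}^y\}$, and single-chain instances for the separate bounds. However, the common-instance bound \eqref{supp:eq:lower-work-strong} does not follow from your plan. Write $\beta_s$ for the chain rate of subproblem $s$, $S_\tau=\sum_s\tau_s\beta_s$, and $\pi_s=\tau_s\beta_s/S_\tau$.

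Summing per-subproblem lower bounds on a shared-radius instance cannot reach the stated bound. If subproblem $s$ carries a fraction $w_s$ of $R^2$, the target forces only relative accuracy $\varepsilon/w_s$ there, so at best $\E N_s\gtrsim\beta_s\log(w_s/(32\varepsilon))$. With equal shares this is $\beta_s\log(1/(32P\varepsilon))$, where $P=m+q+|\mathcal E|$; at $\varepsilon=1/128$ it is vacuous once $P\ge4$. Even the optimal shares $w_s=\pi_s$ leave an entropy deficit $S_\tau\sum_s\pi_s\log(1/\pi_s)$, which can be as large as $S_\tau\log P$.

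Your Jensen step is also misstated. Convexity over the weights controls only the weighted average $\sum_sw_s\E N_s/\beta_s$, not each $\rho_s^{\E N_s}$ separately. Your fallback of concentrating the radius on the dominant subproblem yields $\max_s\tau_s\beta_s$ instead of $\sum_s\tau_s\beta_s$, which loses up to a factor $P$.

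The missing idea is to use that weighted average directly, with shares matched to cost times chain rate: take $R_\ell^2=R^2\pi_\ell$. For an edge, use the effective count $M_e=(\tau_{H,i}^xN_{H,i}^x+\tau_{H,j}^yN_{H,j}^y)/(\tau_{H,i}^x+\tau_{H,j}^y)\ge\min\{N_{H,i}^x,N_{H,j}^y\}$ with cost $\tau_{H,i}^x+\tau_{H,j}^y$. Apply Jensen first over the randomness and then over $\pi_\ell$. This turns the exponent into $\sum_\ell\tau_\ell\E M_\ell/S_\tau\le\E W_\tau/S_\tau$, so the target gives $\tfrac1{16}e^{-\E W_\tau/S_\tau}-\varepsilon\le\varepsilon$. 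Hence $\E W_\tau\ge S_\tau\log(1/(32\varepsilon))$ with no loss, and $\beta_\iota\ge\sqrt{\kappa_\iota}/8$, $\beta_e\ge c_e/8$ finish the proof.

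Two further points break the common instance as you describe it. First, your matched endpoints carry only the coupling chain, with identity objectives there. Objective calls on matched blocks then advance nothing beyond the initial forcing direction, so nothing forces $N_i^f$ or $N_j^g$ to grow on those blocks. Their terms $\tau_i^f\sqrt{\kappa_i^f}$ and $\tau_j^g\sqrt{\kappa_j^g}$ are therefore absent from your weighted bound, and if $\mathcal E$ is perfect, all objective terms are. Every block must carry its own private smooth chain, and matched endpoints need an additional orthogonal coupling subspace, $x_i=(u_i,\xi_e)$ and $y_j=(v_j,\eta_e)$. Second, your pathwise estimate ``error $\ge\rho^{N}$'' is false on a finite chain once $N\ge d$, where the error can vanish, and the counts may have unbounded support. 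You need the tail-subtracted form $\tfrac1{16}\rho^{2N}-\rho^{2d}$, which is still convex in $N$, with $d$ chosen so that $\rho^{2d}\le\varepsilon$. You also need finite chains whose solutions are explicitly geometric so the tail energy can be computed; the paper adjusts the last diagonal entry of the smooth chain for this.
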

For the rate parameters $\beta_\ell$ of Lemmas~\ref{supp:lem:smoothchain}
and~\ref{supp:lem:directional}, sufficient private chain dimensions are
$d_\ell\ge\max\{2,\lceil\beta_\ell\log(1/\varepsilon)\rceil\}$,
as in~\eqref{supp:eq:dimensions}. Each original block needs its
objective-chain dimension; each endpoint of $e\in\mathcal E$ additionally needs $d_e$
coordinates for that coupling chain. These dimensions are fixed before
any oracle calls.
\begin{proof}
The complete construction and cost-weighted reduction are given in Section~\ref{supp:app:lower-full}.
\end{proof}

\begin{corollary}[Aggregate single-edge lower bound]\label{supp:cor:aggregate-lower}
Fix an aggregate bound $\overline L_H\ge2$ and a selected pair $(i_0,j_0)$, together with the objective, cost, radius, and accuracy assumptions of Theorem~\ref{supp:thm:strong-lower}. In $\mathfrak C(M,L,\overline L_H)$, one common instance forces
\[
 \E W_\tau\ge\frac18\left[
 \sum_i\tau_i^f\sqrt{\kappa_i^f}+\sum_j\tau_j^g\sqrt{\kappa_j^g}
 +(\tau_{H,i_0}^x+\tau_{H,j_0}^y)\overline L_H\right]
 \log\frac1{32\varepsilon}.
\]
The individual objective lower bounds and simultaneous endpoint lower bounds hold with $c_{i_0j_0}=\overline L_H$ on their respective instances.
\end{corollary}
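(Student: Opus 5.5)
The plan is to obtain the corollary by specializing Theorem~\ref{supp:thm:strong-lower} to one prescribed array that lies inside the aggregate class. Take the matching $\mathcal E=\{(i_0,j_0)\}$. Declare the coupling array with only the two symmetric cross entries
\[
 L^{H,xy}_{i_0j_0}=L^{H,yx}_{j_0i_0}=\overline L_H\sqrt{\mu_{i_0}^f\mu_{j_0}^g},
\]
and set every other entry of $L^{H,xx},L^{H,xy},L^{H,yx},L^{H,yy}$ to zero.

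\textbf{Step 1: this array lies in the aggregate class.} With this choice, the normalized matrix $\mathsf A_H$ of~\eqref{supp:eq:coupling-scalar-matrix} has exactly two nonzero entries. These are $(\mathsf A_H)_{i_0,m+j_0}=(\mathsf A_H)_{m+j_0,i_0}=\overline L_H$. Up to a permutation, $\mathsf A_H$ is a symmetric $2\times2$ off-diagonal block padded with zeros, so $\norm{\mathsf A_H}_\op=\overline L_H$. Hence $\mathfrak C_{\rm arr}(M,L,\mathsf L_H)\subseteq\mathfrak C(M,L,\overline L_H)$ by Definition~\ref{supp:def:strong-class}.

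\textbf{Step 2: the hypotheses of Theorem~\ref{supp:thm:strong-lower} hold.} By~\eqref{eq:main-edge-strength}, the admissible chain strength is $c_{i_0j_0}=\min\{\overline L_H,\overline L_H\}=\overline L_H\ge2$. This gives~\eqref{supp:eq:lower-regime} on $\mathcal E$. The objective conditions $\kappa\ge4$, the accuracy restriction, the radius, and the costs are assumed as in that theorem.

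\textbf{Step 3: transfer the bounds to the aggregate class.} Theorem~\ref{supp:thm:strong-lower} now supplies one quadratic bilinear instance in the prescribed class. It forces~\eqref{supp:eq:lower-work-strong}, and with a single matched pair the coupling sum reduces to $(\tau^x_{H,i_0}+\tau^y_{H,j_0})\overline L_H$. By Step~1 this instance belongs to $\mathfrak C(M,L,\overline L_H)$. The individual bounds~\eqref{supp:eq:lower-individual-strong}--\eqref{supp:eq:lower-H-strong} transfer the same way, on their respective instances, with $c_{i_0j_0}=\overline L_H$.

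\textbf{Main obstacle: the information model.} The only delicate point is that aggregate-class algorithms are given $\overline L_H$ rather than the array. I would argue that any span-model algorithm that attains~\eqref{eq:main-target} uniformly over the aggregate class also attains it on every member of the smaller prescribed class. Such an algorithm is therefore also an admissible algorithm for the prescribed class in Definition~\ref{supp:def:span}, one that simply ignores the finer declared array; declared scalars do not enlarge the vector spans. Consequently the lower bound of Theorem~\ref{supp:thm:strong-lower} applies to it on the hard instance.
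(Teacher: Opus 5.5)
Your proposal is correct and matches the paper's proof. The paper likewise declares only the two normalized cross entries on $(i_0,j_0)$, both equal to $\overline L_H$, takes $\mathcal E=\{(i_0,j_0)\}$, notes that $\norm{\mathsf A_H}_\op=\overline L_H$ places this prescribed-array class inside $\mathfrak C(M,L,\overline L_H)$ with $c_{i_0j_0}=\overline L_H\ge2$, and applies Theorem~\ref{supp:thm:strong-lower}. Your information-model paragraph is harmless but not needed. That theorem already bounds every span-model algorithm that attains the target on the fixed hard instance, whatever declared data the algorithm uses.
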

\begin{proof}
Declare only the two normalized cross coefficients on $(i_0,j_0)$ nonzero, both equal to $\overline L_H$, and take $\mathcal E=\{(i_0,j_0)\}$. The derived norm is $\overline L_H$, so this prescribed-array class is contained in the aggregate class. Apply Theorem~\ref{supp:thm:strong-lower}.
\end{proof}

\begin{corollary}[Matching objective and blockwise coupling orders]\label{supp:cor:optimality}
Under~\eqref{supp:eq:lower-regime}, suppose also
\begin{equation}\label{supp:eq:matching-regime}
 \varepsilon\le\min\{1/128,1/\kappa_{\max}\}.
\end{equation}
Suppose the prescribed normalized array has only the symmetric cross entries
$(\mathsf A_H)_{i,m+j}=(\mathsf A_H)_{m+j,i}=c_{ij}$ for $(i,j)\in\mathcal E$, with all other entries zero. On this fixed-array class, the objective gradient orders and every active coupling-row order of Theorem~\ref{supp:thm:strong} match their respective lower benchmarks. Its weighted work upper bound also matches~\eqref{supp:eq:lower-work-strong}, up to universal constants and the $O(E)$ call cost per unmatched row. No lower bound for general input-overlap factors is asserted.
\end{corollary}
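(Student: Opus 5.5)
The plan is to compare the upper bounds of Theorem~\ref{supp:thm:strong} with the lower bounds of Theorem~\ref{supp:thm:strong-lower}, row by row and for weighted work. Two reductions are needed first: the upper bound's logarithmic factor must be turned into the lower bound's $\log(1/(32\varepsilon))$, and the upper bound's coupling coefficients must be identified with the chain strengths $c_{ij}$.

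\textbf{Step 1: the logarithmic factors.} Condition~\eqref{supp:eq:matching-regime} gives $\kappa_{\max}\le1/\varepsilon$. Hence
\[
E=\lceil\log_2(2\kappa_{\max}/\varepsilon)\rceil\le\log_2(2/\varepsilon^2)+1\le 3\log_2(1/\varepsilon)
\]
for $\varepsilon\le1/128$, so $E=O(\log(1/\varepsilon))$. In the other direction, $\varepsilon\le1/128$ gives $1/(32\varepsilon)\ge(1/\varepsilon)^{2/7}$. Therefore $\log(1/(32\varepsilon))\ge\frac27\log(1/\varepsilon)$, and the two logarithmic factors agree up to universal constants.

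\textbf{Step 2: the coupling coefficients.} Under the prescribed array, row $i$ of $\mathsf A_H$ has the single nonzero entry $c_{ij}$ when $i$ is matched to $j$. Thus $\kappa_{H,i}^x=c_{ij}$, and symmetrically $\kappa_{H,j}^y=c_{ij}$.

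Now consider the supports. The row $\nabla_{x_i}H$ has support $\{y_j\}$, and $y_j$ lies in no other support because $\mathcal E$ is a matching. Hence $\nu=1$ and $\Delta_{H,i}^x=1$, and likewise $\Delta_{H,j}^y=1$. Unmatched rows are empty, with $\Delta=1$ and coefficient zero.

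Since $c_{ij}\ge2$, the upper coefficient $39+72\sqrt2\,c_{ij}$ is $O(c_{ij})$. For unmatched rows it is the constant $39$, giving $O(E)$ calls.

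\textbf{Step 3: comparison.} Combining the bounds gives, for each matched row,
\[
\E N_{H,i}^x\le C\,c_{ij}\log(1/\varepsilon),
\]
against the lower bound~\eqref{supp:eq:lower-H-strong}, which is $\Omega(c_{ij}\log(1/\varepsilon))$ on that instance. The objective rows give $75\sqrt{\kappa_i^f}E=O(\sqrt{\kappa_i^f}\log(1/\varepsilon))$ against~\eqref{supp:eq:lower-individual-strong}, and similarly for $g$. Summing these with costs via~\eqref{supp:eq:work-main} gives the weighted bound $O([\cdot]\log(1/\varepsilon))+O(E)\sum_{\text{unmatched}}\tau$. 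This matches~\eqref{supp:eq:lower-work-strong} up to constants and the unmatched-row term.

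Two points need care. First, the upper bounds hold for every member of the class, so they apply in particular to each lower instance; the separately quantified instances in Theorem~\ref{supp:thm:strong-lower} therefore cause no conflict. Second, I expect the main obstacle to be the logarithm conversion in Step 1, specifically tracking that $\kappa_{\max}\le1/\varepsilon$ is exactly what absorbs the $\kappa_{\max}$ inside the upper bound's logarithm.
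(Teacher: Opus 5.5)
Your proposal is correct and follows essentially the same route as the paper's proof. It uses~\eqref{supp:eq:matching-regime} to show that both $E$ and $\log(1/(32\varepsilon))$ are $\Theta(\log(1/\varepsilon))$, identifies $\kappa_{H,i}^x=\kappa_{H,j}^y=c_{ij}$ with unit overlaps on matched pairs, absorbs the additive constant via $c_e\ge2$, and compares Theorems~\ref{supp:thm:strong} and~\ref{supp:thm:strong-lower} together with~\eqref{supp:eq:work-main}, leaving $O(E)$ calls per unmatched row; your explicit constants $E\le3\log_2(1/\varepsilon)$ and $\log(1/(32\varepsilon))\ge\frac27\log(1/\varepsilon)$ check out.
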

\begin{proof}
In~\eqref{supp:eq:matching-regime}, $\left\lceil\log_2\frac{2\kappa_{\max}}{\varepsilon}\right\rceil=O(\log(1/\varepsilon))$, and $\log(1/(32\varepsilon))$ is bounded below by a universal positive multiple of $\log(1/\varepsilon)$. On every matched edge, $\kappa_{H,i}^x=\kappa_{H,j}^y=c_{ij}$ and both overlaps are one; $L_H=\max_{e\in\mathcal E}c_e$ when the matching is nonempty. The conditions $\kappa_\iota\ge4$ and $c_e\ge2$ absorb the additive constants for objectives and active coupling rows. Compare Theorems~\ref{supp:thm:strong} and~\ref{supp:thm:strong-lower}, together with~\eqref{supp:eq:work-main}. Unmatched rows contribute $O(E)$ calls each under the uniform rules. With all blocks matched, there is no unmatched-row term.
\end{proof}
The matching comparison permits heterogeneous strengths, for example
$c_{11}=100$ and $c_{22}=2$: the second pair's lower coefficient is $2$
even though $L_H=100$. For general prescribed arrays,
$c_{ij}\le\kappa_{H,i}^x,\kappa_{H,j}^y$ can be strict, so the lower
certificate need not match every row--overlap product.

\FloatBarrier

The bounds concern worst-case declared-bound classes under
Definition~\ref{supp:def:span}. They do not assert hardness for every
interaction or for unrestricted black-box algorithms. The construction
below isolates private chains and allocates their radii by costs and
chain rates to prove the common-instance work bound directly. Its finite
tail subtraction covers almost surely finite stopping times, including
counts beyond the chosen chain dimensions.

\subsection{Complete finite-dimensional lower-bound constructions}\label{supp:app:lower-full}
The symbol $\varrho$ denotes a geometric chain ratio in this construction; the number of dual blocks remains $q$. Chain dimensions and forcing coefficients are local to the stated construction.
The finite-chain construction uses a matching $\mathcal E$ of primal--dual pairs and the fixed declared coupling arrays. Work first in coordinates normalized by the curvature bounds for $f$ and $g$, so the lower curvatures are one, the objective upper bounds are $\kappa_\iota$, and the declared coupling array is $\mathsf A_H$. Each pair has strength $c_e$ from~\eqref{eq:main-edge-strength}. Known block rescalings preserve all evaluation counts, block spans, and squared errors in the curvature metric.

\subsubsection{Information granted in the lower proof}\label{supp:app:lower-model}
Grant the first coordinate direction of each private objective gradient chain and the first dual direction of each private coupling chain for free. No primal coupling direction is granted. Also grant projections onto the fixed private subspaces. Such a projection cannot transfer a vector between subspaces or reveal a new coordinate within one. A lower bound with these privileges is also a lower bound in Definition~\ref{supp:def:span}.

All support statements hold pathwise, for every seed and every finite realized count. Infinite expected positive-cost work already satisfies the lower bounds. Otherwise all expected individual counts are finite. Dimensions are chosen before the algorithm runs, independently of its seed, evaluations, and realized stopping time.
\subsubsection{A private smooth-gradient chain}\label{supp:app:gradient-chain}
\begin{lemma}[Finite smooth chain]\label{supp:lem:smoothchain}
Let $\kappa\ge4$, $d\ge2$, and $A>0$. Define
\begin{equation}\label{supp:eq:smoothchainparams}
 a=\frac{\kappa-1}{4},\qquad
 \varrho=\frac{\sqrt\kappa-1}{\sqrt\kappa+1},\qquad
 \beta=\frac1{-2\log \varrho}.
\end{equation}
Let $G\in\RR^{d\times d}$ be tridiagonal with
\begin{equation}\label{supp:eq:smoothG}
 G_{jj}=1+2a\ (j<d),\quad G_{dd}=1+2a-a\varrho,\quad
 G_{j,j+1}=G_{j+1,j}=-a.
\end{equation}
The quadratic
\begin{equation}\label{supp:eq:smoothphi}
 \vartheta(u)=\tfrac12u^\top Gu-bu_1,\qquad b=Aa/\varrho,
\end{equation}
satisfies $I\preceq G\preceq\kappa I$ and has minimizer
\begin{equation}\label{supp:eq:smoothsol}
 u_j^\star=A\varrho^{j-1},\qquad
 R_\vartheta^2:=\norm{u^\star}^2=A^2\frac{1-\varrho^{2d}}{1-\varrho^2}.
\end{equation}
If only this gradient oracle can introduce new directions in its private subspace, then every admissible output after $Q_g$ gradient evaluations satisfies
\begin{equation}\label{supp:eq:smoothtail}
 \frac{\norm{u-u^\star}^2}{R_\vartheta^2}
 \ge\frac1{16}e^{-Q_g/\beta}-\varrho^{2d},\qquad
 \beta\ge\frac{\sqrt\kappa}{8}.
\end{equation}
The statement holds for random $Q_g$ and for counts exceeding $d$.
\end{lemma}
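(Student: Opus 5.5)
The plan is to verify the four claims in order: the spectral bounds on $G$, the explicit minimizer, the span-based tail bound, and the bound on $\beta$. The only nonalgebraic ingredient is the support-propagation argument for the tail bound.

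\emph{Spectrum and minimizer.} Write $G=I+a\mathcal L$, where $\mathcal L$ is tridiagonal with diagonal $2$ (last entry $2-\varrho$) and off-diagonals $-1$. Its quadratic form is
\[
u^\top\mathcal Lu=u_1^2+\sum_{j<d}(u_j-u_{j+1})^2+(1-\varrho)u_d^2\ge0 .
\]
Gershgorin's theorem gives $\mathcal L\preceq4I$. Hence $I\preceq G\preceq(1+4a)I=\kappa I$.

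For the minimizer, note first that $\varrho$ satisfies $\varrho+\varrho^{-1}=\frac{2\kappa+2}{\kappa-1}=\frac{1+2a}{a}$. Equivalently, $a\varrho^2-(1+2a)\varrho+a=0$ and $1+2a-a\varrho=a/\varrho$. With this identity, checking $Gu^\star=be_1$ row by row is routine:
\begin{itemize}
\item each interior row reduces to $\varrho^{j-2}\bigl(a\varrho^2-(1+2a)\varrho+a\bigr)=0$;
\item the first row gives $A(1+2a-a\varrho)=Aa/\varrho=b$;
\item the modified last row gives $\varrho^{d-2}\bigl(-a+\varrho(a/\varrho)\bigr)=0$.
\end{itemize}
Summing the geometric series then gives $R_\vartheta^2$.

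\emph{Tail bound.} This is the main step. The direction $e_1$ is granted, and $\nabla\vartheta(u)=Gu-be_1$ with $G$ tridiagonal. By induction, after $Q_g$ gradient evaluations every vector in the private span, and hence the output $u$, is supported on the first $Q_g+1$ coordinates. Private projections create no new coordinates, so they do not affect this. The argument is pathwise, so it holds for every realized, possibly random, $Q_g$.

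If $Q_g+1<d$, only the tail of $u^\star$ remains uncorrected:
\[
\norm{u-u^\star}^2\ge\sum_{j>Q_g+1}A^2\varrho^{2(j-1)}=A^2\frac{\varrho^{2Q_g+2}-\varrho^{2d}}{1-\varrho^2}.
\]
Dividing by $R_\vartheta^2\le A^2/(1-\varrho^2)$ gives the ratio bound $\varrho^{2}\varrho^{2Q_g}-\varrho^{2d}$. Now $\varrho^{2Q_g}=e^{-Q_g/\beta}$ by the definition of $\beta$. Also $\kappa\ge4$ gives $\varrho\ge1/3$, so $\varrho^2\ge1/16$, which yields the stated bound.

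If $Q_g+1\ge d$, the right-hand side of the claim is nonpositive, since $e^{-Q_g/\beta}/16\le\varrho^{2Q_g+2}\le\varrho^{2d}$. The claim is then trivial; this case covers counts exceeding $d$.

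\emph{Rate parameter.} Put $s=1/\sqrt\kappa\le1/2$. Then
\[
-\log\varrho=\log\frac{1+s}{1-s}\le\frac{2s}{1-s}\le4s .
\]
Therefore $\beta=1/(-2\log\varrho)\ge1/(8s)=\sqrt\kappa/8$.

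The main obstacle is the support-propagation step, and specifically being careful that the granted first direction accounts for the offset $Q_g+1$, which is absorbed by the factor $\varrho^2\ge1/16$. All other steps are direct calculations.
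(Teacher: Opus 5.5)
Your proposal is correct and follows essentially the same argument as the paper's proof: the splitting $G=I+aT$, the sum-of-squares form for $T\succeq0$, the identity $1+2a=a(\varrho+\varrho^{-1})$ for the row-by-row check, support in $\spanop\{e_1,\ldots,e_{\min(Q_g+1,d)}\}$ via tridiagonality and the granted $e_1$, the tail estimate absorbed by $\varrho^2\ge1/16$, and the bound $\log(1+x)\le x$ for $\beta$. The differences are cosmetic. You use Gershgorin where the paper writes out $u^\top(4I-T)u$ as a sum of squares, and you parametrize by $s=1/\sqrt\kappa$ instead of $s=\sqrt\kappa$.
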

\begin{proof}
Write $G=I+aT$, where $T$ has diagonal $2$ except for final entry $2-\varrho$, and off-diagonal entries $-1$. Then
\begin{align*}
 u^\top Tu&=\sum_{j=1}^{d-1}(u_j-u_{j+1})^2+u_1^2+(1-\varrho)u_d^2\ge0,\\
 u^\top(4I-T)u&=\sum_{j=1}^{d-1}(u_j+u_{j+1})^2+u_1^2+(1+\varrho)u_d^2\ge0.
\end{align*}
Thus $0\preceq T\preceq4I$ and $I\preceq G\preceq\kappa I$.
The relation
\[
 1+2a=a(\varrho+\varrho^{-1})
\]
shows that $Gu^\star=be_1$: the first row equals $Aa/\varrho$, every interior row vanishes, and the final row vanishes because of the adjusted last diagonal. Summing the geometric series proves the radius formula.

Let $\mathcal K_k=\spanop\{e_1,\ldots,e_{\min(k,d)}\}$, with $\mathcal K_0=\{0\}$. Tridiagonality implies
$G\mathcal K_k\subseteq\mathcal K_{k+1}$. Even with $e_1$ granted for free, $Q_g$ evaluations leave every available vector in $\mathcal K_{Q_g+1}$. Other private oracles, scalar arithmetic, and the granted summand projections cannot enlarge this support.

For $k<d$, the missing relative energy is
\[
 \frac{\sum_{j=k+1}^d(u_j^\star)^2}{R_\vartheta^2}
 =\frac{\varrho^{2k}-\varrho^{2d}}{1-\varrho^{2d}}
 \ge \varrho^{2k}-\varrho^{2d}.
\]
For $k\ge d$, the right side is nonpositive, so the inequality remains valid. Take $k=Q_g+1$. Since $\varrho\ge1/3$, we have $\varrho^2\ge1/16$, and $\varrho^{2Q_g}=e^{-Q_g/\beta}$, proving the tail bound. Finally, with $s=\sqrt\kappa\ge2$,
\[
 -\log \varrho=\log\frac{s+1}{s-1}
 =\log\left(1+\frac2{s-1}\right)\le\frac2{s-1}.
\]
Consequently $\beta\ge(s-1)/4\ge s/8$.
\end{proof}

\subsubsection{A private bilinear coupling chain}\label{supp:app:edge-chain}
\begin{lemma}[Finite bilinear coupling chain]\label{supp:lem:edgechain}
Let $c\ge2$, $d\ge2$, and $A>0$. Let $S_d$ be the lower shift, $S_de_j=e_{j+1}$ for $j<d$ and $S_de_d=0$. Set
\begin{equation}\label{supp:eq:edgeparams}
 B=\frac c2(I-S_d),\qquad
 \varrho=\frac{\sqrt{1+c^2}-1}{\sqrt{1+c^2}+1},\qquad
 \beta_H=\frac1{-2\log \varrho}.
\end{equation}
Consider
\begin{equation}\label{supp:eq:edgeproblem}
 \cL_{\rm c}(\xi,\eta)=\tfrac12\norm{\xi}^2+\ip{\eta}{B\xi}
                    -\tfrac12\norm{\eta}^2+b\eta_1,
 \qquad b=\frac{A(1+\varrho^{2d+1})}{1-\varrho}.
\end{equation}
Then $\norm{B}_\op\le c$ and the saddle point is
\begin{align}
 \eta_j^\star&=A(\varrho^{j-1}-\varrho^{2d-j+1}),\label{supp:eq:edgesoly}\\
 \xi_j^\star&=-\sqrt \varrho\,A(\varrho^{j-1}+\varrho^{2d-j}),\qquad j=1,\ldots,d.\label{supp:eq:edgesolx}
\end{align}
Its squared radius is
\begin{equation}\label{supp:eq:edgeradius}
 R_H^2=\norm{\xi^\star}^2+\norm{\eta^\star}^2
 =\frac{A^2}{1-\varrho}(1-\varrho^{2d})(1+\varrho^{2d+1}).
\end{equation}
Every point supported within the first $k$ coordinates of both variables satisfies
\begin{equation}\label{supp:eq:edgetail-support}
 \frac{\norm{\xi-\xi^\star}^2+\norm{\eta-\eta^\star}^2}{R_H^2}
 \ge\frac12(\varrho^{2k}-\varrho^{2d}),
\end{equation}
including when $k\ge d$.
\end{lemma}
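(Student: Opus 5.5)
The plan is to verify each claim of Lemma~\ref{supp:lem:edgechain} directly, since the instance is an explicit quadratic and everything reduces to linear algebra and geometric series. The order is: operator norm, saddle system, recurrence and boundary rows, radius, and finally the tail bound.

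\emph{Operator norm.} The shift satisfies $\norm{S_d}_\op\le1$. Hence $\norm{I-S_d}_\op\le2$, and therefore $\norm{B}_\op\le c$.

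\emph{Saddle system.} The function $\cL_{\rm c}$ is strongly convex--concave, so its saddle point is the unique solution of the first-order conditions
\[
 \xi+B^\top\eta=0,\qquad B\xi-\eta+be_1=0 .
\]
Eliminating $\xi=-B^\top\eta$ gives $(I+BB^\top)\eta=be_1$. Row $j$ of $I-S_d$ is $e_j^\top-e_{j-1}^\top$ (only $e_1^\top$ for $j=1$). Consequently $BB^\top=\tfrac{c^2}{4}(I-S_d)(I-S_d)^\top$ is tridiagonal, with
\begin{itemize}
 \item diagonal entries $\tfrac{c^2}{4}$ in row $1$ and $\tfrac{c^2}{2}$ in rows $j\ge2$;
 \item off-diagonal entries $-\tfrac{c^2}{4}$.
\end{itemize}

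\emph{Recurrence.} The interior rows $2\le j\le d-1$ are the linear recurrence
\[
 \bigl(1+\tfrac{c^2}{2}\bigr)\eta_j=\tfrac{c^2}{4}(\eta_{j-1}+\eta_{j+1}).
\]
Its characteristic roots $\varrho^{\pm1}$ satisfy $\varrho+\varrho^{-1}=2+4/c^2$. With $s=\sqrt{1+c^2}$ one checks
\[
 \frac{(s-1)^2+(s+1)^2}{s^2-1}=\frac{2c^2+4}{c^2},
\]
so the $\varrho$ of~\eqref{supp:eq:edgeparams} is the root in $(0,1)$. Thus $\eta_j=\alpha\varrho^{j-1}+\gamma\varrho^{-(j-1)}$.

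\emph{Boundary rows.} The last row $j=d$ has the same diagonal $1+c^2/2$ but no right neighbour. It is therefore the recurrence with a ghost value $\eta_{d+1}=0$, which forces the reflected form $\eta_j=A(\varrho^{j-1}-\varrho^{2d-j+1})$ of~\eqref{supp:eq:edgesoly}. The first row has the smaller diagonal $1+c^2/4$. Substituting the ansatz into it and simplifying with the characteristic relation should produce exactly $b=A(1+\varrho^{2d+1})/(1-\varrho)$. This boundary bookkeeping is the step I expect to need the most care. I would handle it by rewriting row $1$ as the interior recurrence with a ghost term $\eta_0$, and using $(1+c^2/4)=(1+c^2/2)-c^2/4$ to absorb the deficit.

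\emph{Primal formula.} Next compute $\xi=-B^\top\eta=-\tfrac c2(\eta_j-\eta_{j+1})$, with $\eta_{d+1}=0$. This gives
\[
 \xi_j=-\tfrac c2\,A(1-\varrho)\bigl(\varrho^{j-1}+\varrho^{2d-j}\bigr).
\]
To match~\eqref{supp:eq:edgesolx} I then need the identity $\tfrac c2(1-\varrho)=\sqrt\varrho$. It follows from $\varrho=(s-1)/(s+1)$: the left side is $\tfrac{c}{s+1}$, and $\sqrt{\varrho}=\sqrt{(s-1)/(s+1)}=c/(s+1)$ because $s^2-1=c^2$.

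\emph{Radius.} Expand the squares in $\norm{\eta^\star}^2+\norm{\xi^\star}^2$. The cross terms are $-2A^2d\varrho^{2d}$ from $\eta$ and $+2\varrho A^2d\varrho^{2d-1}$ from $\xi$, so they cancel. The remaining geometric sums are
\[
 (1+\varrho)\,A^2\,\frac{1-\varrho^{2d}}{1-\varrho^2}\bigl(1+\varrho^{2d+1}\bigr),
\]
after pairing $\sum\varrho^{2(j-1)}$ with the mirrored sum. This simplifies to~\eqref{supp:eq:edgeradius}.

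\emph{Tail bound.} For a point supported in the first $k<d$ coordinates, the error is at least the missing mass $\sum_{j>k}\bigl[(\eta_j^\star)^2+(\xi_j^\star)^2\bigr]$. The primal terms dominate: $(\xi_j^\star)^2\ge\varrho A^2\varrho^{2(j-1)}$, since both summands in $\xi_j^\star$ have the same sign. This gives missing mass at least $\varrho A^2(\varrho^{2k}-\varrho^{2d})/(1-\varrho^2)$. Dividing by $R_H^2\le A^2(1+\varrho)/(1-\varrho^2)$ yields the ratio $\tfrac{\varrho}{1+\varrho}(\varrho^{2k}-\varrho^{2d})$.

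This bound only reaches the constant $1/2$ if $\varrho\ge1$, which fails, so I must also use the dual tail. I would bound $(\eta_j^\star)^2+(\xi_j^\star)^2\ge A^2\varrho^{2(j-1)}(1+\varrho)$, up to terms controlled by $\varrho^{2d}$. The cross terms again cancel pairwise, giving a tail fraction $\ge(\varrho^{2k}-\varrho^{2d})\cdot\tfrac{1-\varrho}{1+\varrho^{2d+1}}\cdot\tfrac{1+\varrho}{1-\varrho^2}$ up to the mirrored contributions. I expect this to land at or above $\tfrac12(\varrho^{2k}-\varrho^{2d})$ because $1+\varrho^{2d+1}\le2$; this constant is the other delicate point. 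For $k\ge d$ the right-hand side is nonpositive, so the inequality holds trivially.
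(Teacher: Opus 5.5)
Your proposal is correct and follows essentially the paper's proof. The only difference is that the paper verifies the stated formulas directly in the two saddle equations, using adjacent differences of $\eta^\star$ and $\xi^\star$ together with $\tfrac c2(1-\varrho)=\sqrt\varrho$, whereas you derive $\eta^\star$ from the tridiagonal recurrence. The radius and tail parts are identical: both use the per-coordinate identity $(\xi_j^\star)^2+(\eta_j^\star)^2=A^2(1+\varrho)(\varrho^{2j-2}+\varrho^{4d-2j+1})$ and then bound the denominator by $2$.

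Both of your hedged steps close. With the ghost value $\eta_0=A(\varrho^{-1}-\varrho^{2d+1})$, row~$1$ becomes $\tfrac{c^2}{4}(\eta_0-\eta_1)=b$ by $\tfrac{c^2}{4}(1-\varrho)^2=\varrho$. In the tail, once the cross terms cancel, the leftover mirrored terms are nonnegative, so you can simply drop them rather than control them.
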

\begin{proof}
Since $\norm{S_d}_\op\le1$, the norm bound follows. Write $t=c/2$. The definition of $\varrho$ gives
\begin{equation}\label{supp:eq:tq}
 t(1-\varrho)=\sqrt \varrho,\qquad t^2(1-\varrho)^2=\varrho.
\end{equation}
The saddle equations are
\begin{equation}\label{supp:eq:edge-equations}
 \xi+B^\top\eta=0,\qquad \eta-B\xi=be_1.
\end{equation}
For $j<d$, subtract adjacent entries of~\eqref{supp:eq:edgesoly}; the difference is
$A(1-\varrho)(\varrho^{j-1}+\varrho^{2d-j})$.
The same expression holds at $j=d$ with $\eta_{d+1}^\star=0$. Equation~\eqref{supp:eq:tq} proves the first saddle equation. For $j\ge2$, subtract adjacent entries of~\eqref{supp:eq:edgesolx} to obtain
$t(\xi_j^\star-\xi_{j-1}^\star)=\eta_j^\star$. At $j=1$,
\[
 \eta_1^\star-t\xi_1^\star
 =A(1-\varrho^{2d})+\frac{\varrho A}{1-\varrho}(1+\varrho^{2d-1})
 =\frac{A(1+\varrho^{2d+1})}{1-\varrho}=b.
\]
This proves~\eqref{supp:eq:edge-equations}. The matrix $I+BB^\top$ is positive definite, so the solution is unique.

At coordinate $j$, cancellation of cross terms gives
\begin{equation}\label{supp:eq:edge-energy}
 (\xi_j^\star)^2+(\eta_j^\star)^2
 =A^2(1+\varrho)(\varrho^{2j-2}+\varrho^{4d-2j+1}).
\end{equation}
Summing the geometric series yields~\eqref{supp:eq:edgeradius}. For $k<d$, keep just the first nonnegative term in the tail of~\eqref{supp:eq:edge-energy}. The relative tail is at least
\[
 \frac{\varrho^{2k}-\varrho^{2d}}{(1-\varrho^{2d})(1+\varrho^{2d+1})}
 \ge\frac12(\varrho^{2k}-\varrho^{2d}),
\]
since the denominator is at most $2$. For $k\ge d$, the right side is nonpositive.
\end{proof}

\begin{lemma}[Directional support and effective work]\label{supp:lem:directional}
Suppose the first dual direction of Lemma~\ref{supp:lem:edgechain} is initially available, the primal memory is zero, and no other oracle introduces new directions in these private spaces. Let $Q_{\to}$ count $B$ products and $Q_{\leftarrow}$ count $B^\top$ products. Both output variables are supported within
\begin{equation}\label{supp:eq:direction-support}
 \mathcal K_{1+\min\{Q_{\to},Q_{\leftarrow}\}}.
\end{equation}
For any positive $\tau_{\to},\tau_{\leftarrow}$, define
\[
 M_H=\frac{\tau_{\to}Q_{\to}+\tau_{\leftarrow}Q_{\leftarrow}}
            {\tau_{\to}+\tau_{\leftarrow}}.
\]
This effective count need not be integral and imposes no pairing or
balance between directional calls. Then pathwise,
\begin{equation}\label{supp:eq:edge-worktail}
 \frac{\norm{\xi-\xi^\star}^2+\norm{\eta-\eta^\star}^2}{R_H^2}
 \ge\frac1{16}e^{-M_H/\beta_H}-\varrho^{2d},\qquad
 \beta_H\ge c/8.
\end{equation}
The same inequality holds with $M_H$ replaced by either directional count alone.
\end{lemma}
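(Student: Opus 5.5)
The proof has two parts: a pathwise support-growth argument for the bidiagonal operators, and then Lemma~\ref{supp:lem:edgechain} applied at the resulting support level. Write $t=c/2$, so that $B=t(I-S_d)$ is lower bidiagonal and $B^\top=t(I-S_d^\top)$ is upper bidiagonal. Hence $B\mathcal K_k\subseteq\mathcal K_{k+1}$ and $B^\top\mathcal K_k\subseteq\mathcal K_k$.

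\textbf{Support growth.} Track the support levels $p$ (primal) and $\delta$ (dual), meaning that all available primal vectors lie in $\mathcal K_p$ and all dual vectors in $\mathcal K_\delta$. Initially $p=0$ and $\delta=1$, because $e_1$ is granted in the dual and the primal memory is zero. By the span rules, scalar arithmetic and the granted projections onto private subspaces leave both levels unchanged. A $B^\top$ product takes a dual query and returns a primal vector, so it updates $p\leftarrow\max\{p,\delta\}$. A $B$ product updates $\delta\leftarrow\max\{\delta,p+1\}$. I will prove by induction over calls that $p\le\delta$ always holds, since $p$ only ever rises to the current value of $\delta$ and $\delta$ never decreases.

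Two consequences follow.
\begin{enumerate}
\item Each $B$ call raises $\delta$ by at most one, because $p+1\le\delta+1$. Hence $\delta\le1+Q_{\to}$.
\item $\delta$ can increase only when $p=\delta$. After $\delta$ has attained a given value, $p$ can reach that value only through a new $B^\top$ call. So each increment of $\delta$ beyond its initial value $1$ is preceded by a distinct $B^\top$ call, and $\delta\le1+Q_{\leftarrow}$.
\end{enumerate}
Combining these with $p\le\delta$ gives~\eqref{supp:eq:direction-support} for both output variables. The argument holds pathwise for every realization, including counts exceeding $d$, where $\mathcal K_k$ saturates.

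\textbf{Tail bound.} Set $k^\ast=\min\{Q_{\to},Q_{\leftarrow}\}$ and apply~\eqref{supp:eq:edgetail-support} with $k=1+k^\ast$. This gives the relative error bound $\tfrac12\varrho^{2}\varrho^{2k^\ast}-\tfrac12\varrho^{2d}$. The following steps then yield~\eqref{supp:eq:edge-worktail}:
\begin{enumerate}
\item Since $c\ge2$, the ratio $\varrho$ is increasing in $c$ and satisfies $\varrho\ge(\sqrt5-1)/(\sqrt5+1)>0.38$. Thus $\varrho^2\ge1/8$ and $\tfrac12\varrho^2\ge1/16$.
\item Drop the factor $\tfrac12$ on the negative term, using $-\tfrac12\varrho^{2d}\ge-\varrho^{2d}$.
\item $M_H$ is a convex combination of $Q_{\to}$ and $Q_{\leftarrow}$, so $k^\ast\le M_H$. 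Hence $\varrho^{2k^\ast}=e^{-k^\ast/\beta_H}\ge e^{-M_H/\beta_H}$.
\end{enumerate}
Replacing $M_H$ by either directional count alone also works, since $k^\ast$ is at most each of them. No pairing or balance between the two directional counts is needed.

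\textbf{Bound on $\beta_H$.} Put $s=\sqrt{1+c^2}$. Then
\[
 -\log\varrho=\log\frac{s+1}{s-1}=\log\Bigl(1+\frac{2}{s-1}\Bigr)\le\frac{2}{s-1},
\]
so $\beta_H\ge(s-1)/4$. For $c\ge2$ we have $s-1\ge c-1\ge c/2$, which gives $\beta_H\ge c/8$.

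\textbf{Main obstacle.} The delicate step is the second support consequence, that increments of the dual level are charged to distinct $B^\top$ calls. It relies on the invariant $p\le\delta$ and on the fact that the granted first dual direction and the zero primal start make the primal lag the dual by at most the number of back-transfers. I will verify it by explicit induction over the call sequence, treating the two call types as the level updates above.
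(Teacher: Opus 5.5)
Your proposal is correct and follows the paper's proof. You use the same support growth ($B\mathcal K_k\subseteq\mathcal K_{k+1}$, $B^\top\mathcal K_k\subseteq\mathcal K_k$, starting levels $0$ and $1$), then apply Lemma~\ref{supp:lem:edgechain} at level $1+\min\{Q_{\to},Q_{\leftarrow}\}\le 1+M_H$ with $\varrho^2/2\ge1/16$, and bound $\beta_H$ via $\log(1+x)\le x$. The only difference is bookkeeping: the paper inducts directly on $a_x\le\min\{Q_{\leftarrow},Q_{\to}+1\}$ and $a_y\le\min\{Q_{\leftarrow}+1,Q_{\to}+1\}$, whereas your invariant $p\le\delta$, with each dual increment charged to a distinct $B^\top$ call, is an equally valid route.
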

\begin{proof}
Let $a_x,a_y$ bound the available support lengths. Initially $a_x=0,a_y=1$. A forward product maps $\mathcal K_k$ into $\mathcal K_{k+1}$, while a transpose product maps $\mathcal K_k$ into itself. Induction gives
\[
 a_x\le\min\{Q_{\leftarrow},Q_{\to}+1\},\qquad
 a_y\le\min\{Q_{\leftarrow}+1,Q_{\to}+1\}.
\]
Indeed, on a forward call the new dual support is at most $a_x+1$ and $Q_{\to}$ increases by one. On a transpose call the new primal support is at most $a_y$ and $Q_{\leftarrow}$ increases by one. The local gradients of $f$ and $g$ are $\xi$ and $\eta-be_1$; with $e_1$ already available, they do not enlarge support. Linear combinations and the granted projections also preserve these bounds. Equation~\eqref{supp:eq:direction-support} follows.

Since $\min\{Q_{\to},Q_{\leftarrow}\}\le M_H$, Lemma~\ref{supp:lem:edgechain} gives
\[
 \mathrm{err}_H\ge\frac{\varrho^2}{2}e^{-M_H/\beta_H}-\frac12\varrho^{2d}.
\]
For $c\ge2$, $\varrho\ge(\sqrt5-1)/(\sqrt5+1)>3/8$, so $\varrho^2/2\ge1/16$. Enlarging the finite-tail subtraction proves~\eqref{supp:eq:edge-worktail}. The individual versions follow from
$\min\{Q_{\to},Q_{\leftarrow}\}\le Q_{\to},Q_{\leftarrow}$.
Finally, with $s=\sqrt{1+c^2}$, the same logarithm bound as in Lemma~\ref{supp:lem:smoothchain} gives $\beta_H\ge(s-1)/4$. For $c\ge2$, $s-1\ge c/2$, proving $\beta_H\ge c/8$.
\end{proof}

\subsubsection{The common instance and oracle isolation}\label{supp:app:directsum}
Every original variable block has a private smooth chain with parameter $\kappa_\iota$. For each $e=(i,j)\in\mathcal E$, give its endpoints additional private coupling subspaces and use Lemma~\ref{supp:lem:edgechain} with $c=c_e$, dimension $d_e$, amplitude $A_e$, and forcing $b_e$:
\[
 x_i=(u_i,\xi_e),\qquad y_j=(v_j,\eta_e)\quad(e=(i,j)\in\mathcal E).
\]
Unmatched blocks consist only of $u_i$ or $v_j$. The matching guarantees that each block has at most one additional coupling subspace. Let $\vartheta_i$ and $\zeta_j$ be the smooth quadratics of Lemma~\ref{supp:lem:smoothchain}. Define
\begin{align}
 f(x)&=\sum_i\vartheta_i(u_i)+\tfrac12\sum_{e\in\mathcal E}\norm{\xi_e}^2,\label{supp:eq:lower-f}\\
 g(y)&=\sum_j\zeta_j(v_j)+\sum_{e\in\mathcal E}
       \bigl(\tfrac12\norm{\eta_e}^2-b_e(\eta_e)_1\bigr),\label{supp:eq:lower-g}\\
 H(x,y)&=\sum_{e\in\mathcal E}\ip{\eta_e}{B_e\xi_e},\qquad
 B_e=(c_e/2)(I-S_{d_e}).\label{supp:eq:lower-H}
\end{align}
Thus the saddle objective is the orthogonal sum of all private smooth problems and one coupling problem per edge. Its saddle point is the collection of the explicit private solutions.

On an objective gradient subspace the Hessian lies between $I$ and the appropriate $\kappa_{\iota}I$; on each coupling subspace it is $I$. Therefore the joint models hold. The two partial-gradient variations on $e=(i,j)$ have cross coefficient at most $\norm{B_e}_\op\le c_e$. By~\eqref{eq:main-edge-strength}, this is at most both $(\mathsf A_H)_{i,m+j}$ and $(\mathsf A_H)_{m+j,i}$. Every other functional coupling coefficient is zero and hence is bounded by its fixed nonnegative declaration. Undoing normalization multiplies the two cross coefficients by $\sqrt{\mu_i^f\mu_j^g}$, so both original arrays are respected. This proves admissibility with the full prescribed arrays, without changing any of their entries.

The actual normalized full-operator norm is $\max_{e\in\mathcal E}\norm{B_e}_\op\le\max_e c_e\le L_H$ for a nonempty matching, and is zero for an empty matching. When the declared array itself consists only of symmetric matching entries $c_e$, its row bounds on each pair are $c_e$, its input overlaps are one, and its derived norm is $L_H=\max_e c_e$. These are precisely the parameters used for the matching upper comparison.

\paragraph{Partial-gradient oracle isolation.}
For $e=(i,j)$, the return $\nabla_{x_i}H$ is zero on its private smooth
subspace and equals $B_e^\top\eta_e$ on its coupling subspace.
Similarly, $\nabla_{y_j}H$ equals $B_e\xi_e$ only on its coupling
subspace. Unmatched coupling partials are zero; partials at other matched
endpoints cannot advance this chain. Apply Lemma~\ref{supp:lem:directional}
separately to each edge with
\[
 Q_{\to}=N_{H,j}^y,\qquad Q_{\leftarrow}=N_{H,i}^x,
 \qquad\tau_{\to}=\tau_{H,j}^y,\quad\tau_{\leftarrow}=\tau_{H,i}^x.
\]
The remaining input blocks have no effect on its return.

Every coupling return vanishes on all smooth chains. An evaluation to another objective gradient block cannot affect a selected private smooth chain. On coupling subspaces, objective gradient returns are only $\xi_e$ and $\eta_e-b_e e_1$. Each first dual direction was granted initially in Section~\ref{supp:app:lower-model}; neither objective return then extends the directional supports. An objective gradient evaluation may return both a private smooth-chain vector and an identity vector in its coupling subspace. The granted projections allow these vectors to be separated and still cannot bypass either support bound. Thus all private lower estimates hold simultaneously for this one instance.

\subsubsection{Cost-weighted radii and random stopping}\label{supp:app:weighted}
Let $\ell\in\mathcal P=V\mathbin{\dot\cup}\mathcal E$ index the private problems. Their chain parameters are $\varrho_\ell,\beta_\ell,d_\ell$. Retain the objective costs $\tau_\iota$ from~\eqref{supp:eq:tag-dictionary} and extend the cost notation to edges:
\begin{equation}\label{supp:eq:nu-beta}
 \tau_e=\tau_{H,i}^x+\tau_{H,j}^y\ (e=(i,j)),\qquad
 S_\tau=\sum_{\ell\in\mathcal P}\tau_\ell\beta_\ell.
\end{equation}
Assign squared solution radii
\begin{equation}\label{supp:eq:radii}
 R_\ell^2=R^2\frac{\tau_\ell\beta_\ell}{S_\tau},\qquad\sum_\ell R_\ell^2=R^2,
\end{equation}
and choose finite dimensions
\begin{equation}\label{supp:eq:dimensions}
 d_\ell\ge\max\{2,\lceil\beta_\ell\log(1/\varepsilon)\rceil\}.
\end{equation}
Then $\varrho_\ell^{2d_\ell}=\exp(-d_\ell/\beta_\ell)\le\varepsilon$. Explicit amplitudes realizing~\eqref{supp:eq:radii} are
\begin{equation}\label{supp:eq:amplitudes}
 A_\ell=\begin{cases}
 R_\ell\sqrt{(1-\varrho_\ell^2)/(1-\varrho_\ell^{2d_\ell})},&\ell\in V,\\[3pt]
 R_\ell\sqrt{(1-\varrho_\ell)/[(1-\varrho_\ell^{2d_\ell})(1+\varrho_\ell^{2d_\ell+1})]},&\ell\in\mathcal E.
 \end{cases}
\end{equation}
All forcing terms are now fixed by~\eqref{supp:eq:smoothphi} and~\eqref{supp:eq:edgeproblem}. Nothing in the construction depends on the realized random seed.

For each realized history, define
\begin{equation}\label{supp:eq:effective-counts}
 M_\iota=N_\iota,\qquad
 M_e=\frac{\tau_{H,i}^xN_{H,i}^x+\tau_{H,j}^yN_{H,j}^y}{\tau_{H,i}^x+\tau_{H,j}^y}
 \quad(e=(i,j)\in\mathcal E).
\end{equation}
Let $W_{\rm act}=\sum_\ell\tau_\ell M_\ell$ be the work charged to the private
objective gradient chains and all matched coupling partials. No counter is
charged twice because $\mathcal E$ is a matching. Pathwise,
$W_{\rm act}\le W_\tau$; calls to any other coupling row have
nonnegative cost and cannot advance a chain. By orthogonality and Lemmas~\ref{supp:lem:smoothchain} and~\ref{supp:lem:directional},
\begin{equation}\label{supp:eq:pathwise-direct}
 \frac{\norm{z^{\out}-z^\star}^2}{R^2}
 \ge\frac1{16}\sum_\ell\pi_\ell e^{-M_\ell/\beta_\ell}-\varepsilon,
 \qquad\pi_\ell=\frac{\tau_\ell\beta_\ell}{S_\tau}.
\end{equation}
The weights are positive and sum to one. If $\E W_\tau<\infty$, Jensen's inequality first over randomness and then over these weights gives
\begin{align*}
 \sum_\ell\pi_\ell\E e^{-M_\ell/\beta_\ell}
 &\ge\sum_\ell\pi_\ell e^{-\E M_\ell/\beta_\ell}
 \ge\exp\left(-\sum_\ell\pi_\ell\frac{\E M_\ell}{\beta_\ell}\right)\\
 &=\exp(-\E W_{\rm act}/S_\tau)\ge\exp(-\E W_\tau/S_\tau).
\end{align*}
Undoing normalization gives
\begin{equation}\label{supp:eq:path-work}
 \frac{\E\norm{z^{\out}-z^\star}_M^2}{R^2}
 \ge\frac1{16}\exp(-\E W_\tau/S_\tau)-\varepsilon.
\end{equation}
Relative squared error at most $\varepsilon$ implies
\[
 \E W_\tau\ge S_\tau\log\frac1{32\varepsilon},\qquad
 S_\tau\ge\frac18\left[\sum_{i=1}^m\tau_i^f\sqrt{\kappa_i^f}
       +\sum_{j=1}^q\tau_j^g\sqrt{\kappa_j^g}
 +\sum_{(i,j)\in\mathcal E}(\tau_{H,i}^x+\tau_{H,j}^y)c_{ij}\right].
\]
Undoing the known block scaling proves the common-instance lower bound.

Finite tail subtraction is essential: in each support-tail estimate the
terminal term $\varrho_\ell^{2d_\ell}$ is subtracted, so the estimate remains
valid even when the chain has been fully revealed and its error can be
zero. The weighted terminal terms sum to at most $\varepsilon$ by
\eqref{supp:eq:dimensions}, yielding~\eqref{supp:eq:pathwise-direct}
for every realized count, including counts beyond the chosen dimensions.
Taking expectations therefore needs neither a limit in dimension nor an
optional-stopping assertion. A sufficient original block dimension is its smooth-chain dimension, plus $d_e$ if it is an endpoint of edge $e$. These dimensions may grow with accuracy, as allowed by the theorem.

For a separate objective gradient bound, put the entire squared radius in its private chain and set every other forcing to zero. Jensen gives
\[
 \varepsilon\ge\tfrac1{16}e^{-\E N_\iota/\beta_\iota}-\varepsilon,
\]
which yields~\eqref{supp:eq:lower-individual-strong}--\eqref{supp:eq:lower-individual-strong-g}. For the separate coupling instance on $e=(i,j)$, put the entire radius in this coupling chain and apply both individual-direction versions of Lemma~\ref{supp:lem:directional}. Set other forcing terms to zero; their solutions then vanish, while all declared coupling arrays stay fixed. This proves both inequalities in~\eqref{supp:eq:lower-H-strong} on the same endpoint instance.

\section{The convex--concave case}\label{supp:sec:convex}
\subsection{Euclidean smoothness bounds and comparison sets}
Use Assumption~\ref{ass:convex} and the compact product comparison sets
$Z=X\times Y$ from Section~\ref{sec:convex}.
The target is $\E\Gap(z^{\out})\le\varepsilon$, with the restricted gap
in~\eqref{eq:main-gap}.
Retain the original-coordinate array $\mathsf L_H$
of~\eqref{supp:eq:coupling-scalar-matrix}
and the Euclidean row bounds $L_{H,i}^x,L_{H,j}^y$ and objective bound
$L_{\max}$ from Section~\ref{sec:convex}. Define
\begin{equation}\label{supp:eq:coupling-derived-reference}
 L_H:=\norm{\mathsf L_H}_\op.
\end{equation}
Thus $L_H$ is the operator norm of the original-coordinate coupling
array, and the Euclidean row bounds are its row norms.
The original zero pattern gives the same supports and overlap parameters.
For internal proof formulas, use the tags of~\eqref{supp:eq:tag-dictionary}.
Lemma~\ref{supp:lem:coupling-normalization} applies with $I$ in place of
$M$. Every row uses the same sampling and cache rules. A global bound
for $F=\nabla h+T_H$ is
\begin{equation}\label{supp:eq:Lglobal}
 L_F:=L_{\max}+L_H\ge\Lip(F).
\end{equation}
A supplied sharper bound that also dominates $L_{\max}$ may replace
$L_F$ and its occurrences in the stage-budget logarithms.

\subsection{Regularization and an implementable wrapper}
For $\varepsilon>0$, set $\sigma=\varepsilon/R^2$
and put
\begin{equation}\label{supp:eq:regparams}
 \chi_\sigma=1+L_F/\sigma,\qquad
 \rho_\varepsilon=(64\chi_\sigma^2)^{-1}.
\end{equation}
With $L_F=L_{\max}+L_H$ as in~\eqref{supp:eq:Lglobal}, define the common stage budget
\begin{equation}\label{supp:eq:convex-epoch-budget}
 E_\varepsilon:=\left\lceil\log_2\!\left[
 128\left(1+\frac{L_{\max}R^2}{\varepsilon}\right)
 \left(1+\frac{(L_{\max}+L_H)R^2}{\varepsilon}\right)^2
 \right]\right\rceil.
\end{equation}
In original coordinates the regularized saddle objective is
\begin{equation}\label{supp:eq:regularized-saddle}
 \cL_\sigma(x,y)=f_\sigma(x)+H(x,y)-g_\sigma(y),
\end{equation}
where
\[
 f_\sigma(x)=f(x)+\tfrac\sigma2\norm{x-x^0}^2,
 \qquad
 g_\sigma(y)=g(y)+\tfrac\sigma2\norm{y-y^0}^2.
\]
\begin{algorithm}[H]
\caption{Regularization for an expected restricted-gap target}\label{supp:alg:convex}
\begin{algorithmic}[1]
\Require The data of Assumption~\ref{ass:convex}, initial point $z^0$, and $\varepsilon>0$.
\Ensure An output satisfying Theorem~\ref{supp:thm:convex}.
\State Compute the Euclidean bounds~\eqref{supp:eq:coupling-derived-reference}--\eqref{supp:eq:Lglobal} and parameters~\eqref{supp:eq:regparams}.
\State Supply gradient wrappers for $f_\sigma,g_\sigma$ in~\eqref{supp:eq:regularized-saddle}, with $M_\sigma=\sigma I$ and $L_\sigma=L+\sigma I$; retain the coupling oracles and arrays.
\State Run Algorithm~\ref{supp:alg:strong} with these data, start $z^0$, and tolerance $\rho_\varepsilon$, using $E_\varepsilon$ stages from~\eqref{supp:eq:convex-epoch-budget}.
\State \Return its output in original coordinates $z^{\out}$.
\end{algorithmic}
\end{algorithm}

Each regularized objective partial uses one original call plus its known
affine term; the coupling oracles are unchanged. The comparison sets
enter the wrapper only through their certified radius $R$.

\subsection{Uniform gap transfer}\label{supp:sec:gap-transfer}
\begin{lemma}[Uniform nonlinear gap transfer]\label{supp:lem:gap-transfer}
Under Assumption~\ref{ass:convex}, let $z_\sigma$ be the unique zero of
$F_\sigma(z)=F(z)+\sigma(z-z^0)$, with $\sigma=\varepsilon/R^2$. Then $\norm{z_\sigma-z^0}\le R$, and every $z=z_\sigma+e$ satisfies
\begin{equation}\label{supp:eq:gap-transfer}
 \Gap(z)\le\frac\varepsilon2+(\sigma+2L_F)R\norm{e}+L_F\norm{e}^2.
\end{equation}
\end{lemma}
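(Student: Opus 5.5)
First I show that $z_\sigma$ lies within $R$ of $z^0$. Let $z^\star\in Z$ be a saddle point, so $F(z^\star)=0$. By monotonicity of $F$ (Appendix~\ref{supp:app:monotonicity} for $T_H$, plus convexity of $h$),
\[
 0\le\ip{F(z_\sigma)-F(z^\star)}{z_\sigma-z^\star}=-\sigma\ip{z_\sigma-z^0}{z_\sigma-z^\star}.
\]
This gives $\ip{z_\sigma-z^0}{z_\sigma-z^\star}\le0$. Expanding, $\norm{z_\sigma-z^0}^2\le\ip{z_\sigma-z^0}{z^\star-z^0}$, so $\norm{z_\sigma-z^0}\le\norm{z^\star-z^0}\le R$. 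The last step uses $z^\star\in Z$ and $\sup_{v\in Z}\norm{v-z^0}\le R$.

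Next I bound the gap at a general point $z=(x,y)$ for fixed comparison points $\xi\in X$ and $\eta\in Y$. Convexity of $\cL(\cdot,y)$ and concavity of $\cL(x,\cdot)$ give the standard bound
\[
 \cL(x,\eta)-\cL(\xi,y)\le\ip{F(z)}{z-v},\qquad v=(\xi,\eta).
\]
This follows by adding $\cL(x,\eta)-\cL(x,y)\le\ip{\nabla_y\cL(x,y)}{\eta-y}$ and $\cL(x,y)-\cL(\xi,y)\le\ip{\nabla_x\cL(x,y)}{x-\xi}$. Hence $\Gap(z)\le\sup_{v\in Z}\ip{F(z)}{z-v}$.

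Now write $F(z)=F(z_\sigma)+(F(z)-F(z_\sigma))=-\sigma(z_\sigma-z^0)+\delta$, where $\norm{\delta}\le L_F\norm{e}$ by~\eqref{supp:eq:Lglobal}. Split $z-v=(z_\sigma-v)+e$. There are four terms to bound.
\begin{itemize}
 \item Main term: $-\sigma\ip{z_\sigma-z^0}{z_\sigma-v}$. Use the identity $2\ip{a}{a-b}=\norm a^2+\norm{a-b}^2-\norm b^2$ with $a=z_\sigma-z^0$ and $b=v-z^0$. It gives $-\sigma\ip{a}{a-b}=\tfrac\sigma2(\norm b^2-\norm a^2-\norm{a-b}^2)\le\tfrac\sigma2R^2=\tfrac\varepsilon2$.
 \item Term $-\sigma\ip{z_\sigma-z^0}{e}$: at most $\sigma R\norm e$.
 \item Term $\ip{\delta}{z_\sigma-v}$: at most $L_F\norm e\cdot\norm{z_\sigma-v}\le 2L_FR\norm e$, since both $z_\sigma$ and $v$ lie within $R$ of $z^0$.
 \item Term $\ip{\delta}{e}$: at most $L_F\norm e^2$.
\end{itemize}
These bounds are uniform in $v\in Z$, so taking the supremum gives~\eqref{supp:eq:gap-transfer}.

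The only delicate step is the main term. A crude Cauchy--Schwarz bound there would give $2\sigma R^2=2\varepsilon$ rather than $\varepsilon/2$. The polarization identity, which keeps the negative $\norm{a}^2$ term, is what yields the sharper constant.
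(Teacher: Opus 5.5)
Your proposal is correct and follows essentially the same route as the paper: the radius bound from monotonicity against a saddle point in $Z$, the first-order bound $\cL(x,\eta)-\cL(\xi,y)\le\ip{F(z)}{z-v}$, the polarization identity at the regularized zero giving $\varepsilon/2$, and the same four-term expansion controlled by the global Lipschitz bound $L_F$. The only thing you leave implicit is the existence and uniqueness of $z_\sigma$, which the paper takes from the strong-monotonicity contraction argument in Appendix~\ref{supp:app:existence}; the statement presupposes this, so it is not a gap.
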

\begin{proof}
Existence and uniqueness follow from the global Lipschitz continuity and $\sigma$-strong monotonicity of $F_\sigma$; see Appendix~\ref{supp:app:existence}. Monotonicity against a zero $z^\star\in Z$ gives
\[
 0\le\ip{F(z_\sigma)-F(z^\star)}{z_\sigma-z^\star}
 =-\sigma\ip{z_\sigma-z^0}{z_\sigma-z^\star}.
\]
Thus $\norm{z_\sigma-z^0}^2\le
\ip{z_\sigma-z^0}{z^\star-z^0}$, proving the radius bound.

For $v=(\xi,\eta)$, convexity in $x$ and concavity in $y$ yield
\begin{equation}\label{supp:eq:gap-residual}
 \cL(x,\eta)-\cL(\xi,y)\le\ip{F(z)}{z-v}.
\end{equation}
At the regularized zero,
\begin{align}\label{supp:eq:gap-at-reg}
 \ip{F(z_\sigma)}{z_\sigma-v}
 &=-\sigma\ip{z_\sigma-z^0}{z_\sigma-v}\nonumber\\
 &=\tfrac\sigma2\left(\norm{v-z^0}^2-\norm{z_\sigma-z^0}^2
                         -\norm{z_\sigma-v}^2\right)
 \le\varepsilon/2.
\end{align}
For $z=z_\sigma+e$, expand the residual pairing:
\begin{align*}
 \ip{F(z)}{z-v}
 &=\ip{F(z_\sigma)}{z_\sigma-v}+\ip{F(z_\sigma)}{e}
   +\ip{F(z)-F(z_\sigma)}{z_\sigma-v+e}\\
 &\le\varepsilon/2+\sigma R\norm{e}
      +L_F\norm{e}(2R+\norm{e}).
\end{align*}
This bound is uniform over the comparison set. Taking the supremum proves~\eqref{supp:eq:gap-transfer}. Unlike a bilinear comparison identity, this argument uses the full nonlinear saddle operator and its global Lipschitz bound.
\end{proof}

\subsection{Convex oracle guarantee and proof}\label{supp:sec:convex-upper}
\begin{theorem}[Simultaneous nonlinear convex bounds]\label{supp:thm:convex}
Under Assumption~\ref{ass:convex}, Algorithm~\ref{supp:alg:convex} applies Theorem~\ref{supp:thm:strong} to the regularized problem at relative mean-square tolerance $\rho_\varepsilon$ in~\eqref{supp:eq:regparams}. Use the default allocation~\eqref{supp:eq:dense-weights}. The output satisfies $\E\Gap(z^{\out})\le\varepsilon$. With $E_\varepsilon$ defined in~\eqref{supp:eq:convex-epoch-budget}, the same run obeys, for all
$1\le i\le m$ and $1\le j\le q$,
\begin{align}
 \E N_i^f
 &\le\left(3+36\sqrt2\sqrt{1+\frac{2L_i^fR^2}{\varepsilon}}\right)E_\varepsilon,
     \label{supp:eq:convex-gradient}\\
 \E N_j^g
 &\le\left(3+36\sqrt2\sqrt{1+\frac{2L_j^gR^2}{\varepsilon}}\right)E_\varepsilon,
     \label{supp:eq:convex-gradient-g}\\
 \E N_{H,i}^x
 &\le\left(39+72\sqrt2\,
       \frac{L_{H,i}^x\sqrt{\Delta_{H,i}^x}R^2}{\varepsilon}\right)E_\varepsilon,
     \label{supp:eq:convex-H}\\
 \E N_{H,j}^y
 &\le\left(39+72\sqrt2\,
       \frac{L_{H,j}^y\sqrt{\Delta_{H,j}^y}R^2}{\varepsilon}\right)E_\varepsilon.
     \label{supp:eq:convex-H-y}
\end{align}
Declared zero Lipschitz rows have overlap one and obey the same
$O(E_\varepsilon)$ evaluation bounds.
Moreover, the explicit common factor satisfies
\begin{equation}\label{supp:eq:convex-log-bound}
 E_\varepsilon
 \le 8+3\log_2\!\left(1+\frac{(L_{\max}+L_H)R^2}{\varepsilon}\right).
\end{equation}
\end{theorem}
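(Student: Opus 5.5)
The proof reduces to Theorem~\ref{supp:thm:strong} applied to the regularized problem~\eqref{supp:eq:regularized-saddle}, combined with the gap transfer of Lemma~\ref{supp:lem:gap-transfer}.

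\textbf{Step 1: the regularized problem satisfies the strongly convex assumptions.} The regularized objectives have $M_\sigma=\sigma I$ and $L_\sigma=L+\sigma I$. So $\kappa_i^f=1+L_i^f/\sigma=1+L_i^fR^2/\varepsilon$, and $\kappa_{\max}\le 1+L_{\max}R^2/\varepsilon$. The coupling arrays are unchanged. After normalization by $\sigma$, the row parameters are $\kappa_{H,i}^x=L_{H,i}^x/\sigma=L_{H,i}^xR^2/\varepsilon$, and similarly for dual rows. The supports and overlaps are unchanged. The regularized saddle point is exactly $z_\sigma$, the zero of $F_\sigma$.

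\textbf{Step 2: the distance guarantee.} Theorem~\ref{supp:thm:strong}, run at tolerance $\rho_\varepsilon$, gives
\[
\E\norm{z^{\out}-z_\sigma}^2\le\rho_\varepsilon\norm{z^0-z_\sigma}^2\le\rho_\varepsilon R^2 .
\]
This uses that $M_\sigma=\sigma I$ is scalar, so the $M$-norm is $\sigma$ times the Euclidean norm and the ratio is unaffected, and that $\norm{z_\sigma-z^0}\le R$ by Lemma~\ref{supp:lem:gap-transfer}. The number of stages it needs is $\lceil\log_2(2\kappa_{\max}/\rho_\varepsilon)\rceil$. Bounding $2\kappa_{\max}\cdot 64\chi_\sigma^2$ by $128(1+L_{\max}R^2/\varepsilon)(1+L_FR^2/\varepsilon)^2$ shows this is at most $E_\varepsilon$. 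Running extra stages only strengthens the contraction, by~\eqref{supp:eq:epoch-contract}.

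\textbf{Step 3: gap transfer.} Set $e=z^{\out}-z_\sigma$. Jensen gives $\E\norm{e}\le\sqrt{\rho_\varepsilon}R=R/(8\chi_\sigma)$ and $\E\norm{e}^2\le R^2/(64\chi_\sigma^2)$. Substituting into~\eqref{supp:eq:gap-transfer}, and using $\sigma R^2=\varepsilon$ and $L_F\le\sigma\chi_\sigma$, gives
\[
(\sigma+2L_F)R\,\E\norm{e}\le 2\sigma\chi_\sigma R\cdot\frac{R}{8\chi_\sigma}=\frac{\varepsilon}{4},\qquad
L_F\,\E\norm{e}^2\le\frac{\varepsilon}{64}.
\]
Hence $\E\Gap(z^{\out})\le\varepsilon/2+\varepsilon/4+\varepsilon/64\le\varepsilon$. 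The expectation is outside the supremum, which is fine because~\eqref{supp:eq:gap-transfer} holds pathwise.

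\textbf{Step 4: the counts.} The counts follow from~\eqref{supp:eq:count-general} with $E_\varepsilon$ stages. For an objective component, $c_\iota=\sqrt{4\kappa_\iota-2}\le\sqrt2\sqrt{1+2L_i^fR^2/\varepsilon}$, so $3+36c_\iota$ gives~\eqref{supp:eq:convex-gradient}. For coupling rows, $c=\max\{1,2\sqrt2\kappa_{H}\sqrt\Delta\}\le 1+2\sqrt2\kappa_H\sqrt\Delta$, which gives the $39+72\sqrt2\,(\cdot)$ form.

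\textbf{Step 5: the bound on $E_\varepsilon$.} Use $L_{\max}\le L_F$, so the argument of $E_\varepsilon$ is at most $128(1+L_FR^2/\varepsilon)^3$. Then $\lceil\log_2 128+3\log_2(\cdot)\rceil\le 8+3\log_2(1+L_FR^2/\varepsilon)$.

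The main obstacle is the constant bookkeeping in Step~3 together with the conversion between the $M_\sigma$-relative target and the Euclidean distance. Everything else is a direct substitution into the earlier results.
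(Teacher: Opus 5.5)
Your proposal is correct and follows the paper's proof essentially step for step. You apply Theorem~\ref{supp:thm:strong} to the $\sigma$-regularized problem at tolerance $\rho_\varepsilon$, whose stage count $\lceil\log_2(2\kappa_{\max}/\rho_\varepsilon)\rceil$ is exactly $E_\varepsilon$. You transfer the distance bound to the gap through Lemma~\ref{supp:lem:gap-transfer}, taking the supremum pathwise and using Cauchy--Schwarz, to obtain $\tfrac{49}{64}\varepsilon$. You read off the counts from \eqref{supp:eq:count-general} using $c_\iota=\sqrt2\sqrt{1+2L_\iota/\sigma}$ and $\max\{1,b\}\le1+b$; several of your displayed inequalities (the stage count and $c_\iota$) in fact hold with equality.
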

\begin{proof}
The regularized lower curvature is $\sigma$ on every block. Normalize by $\sqrt\sigma$. The parameters under strong convexity of $f$ and $g$ are
\[
 \begin{gathered}
 \kappa_\iota^\sigma=1+L_\iota/\sigma,\qquad
 c_\iota^\sigma=\sqrt2\sqrt{1+2L_\iota/\sigma},\qquad L_H^\sigma=L_H/\sigma,\\
 \kappa_{H,i}^{x,\sigma}=L_{H,i}^x/\sigma,\qquad
 \kappa_{H,j}^{y,\sigma}=L_{H,j}^y/\sigma.
 \end{gathered}
\]
The relative distance tolerance $\rho_\varepsilon$ therefore requires exactly the stage budget in~\eqref{supp:eq:convex-epoch-budget}. Undoing the scalar normalization,
\[
 \E\norm{z^{\out}-z_\sigma}^2
 \le\rho_\varepsilon\norm{z^0-z_\sigma}^2\le R^2/(64\chi_\sigma^2).
\]
Take the supremum in Lemma~\ref{supp:lem:gap-transfer} before taking expectation, and use Cauchy--Schwarz. Since $\sigma+2L_F\le2\sigma \chi_\sigma$ and $L_F\le\sigma \chi_\sigma^2$,
\[
 \E\Gap(z^{\out})
 \le\varepsilon/2+\frac{(\sigma+2L_F)R^2}{8\chi_\sigma}
              +\frac{L_FR^2}{64\chi_\sigma^2}
 \le\frac{49}{64}\varepsilon<\varepsilon.
\]
Use Theorem~\ref{supp:thm:epoch} for the number of stages displayed in~\eqref{supp:eq:convex-epoch-budget}, substitute the regularized parameters into~\eqref{supp:eq:count-general}, and then replace $\sigma$ by $\varepsilon/R^2$ and $\chi_\sigma$ by $1+(L_{\max}+L_H)R^2/\varepsilon$. This gives~\eqref{supp:eq:convex-gradient}--\eqref{supp:eq:convex-H-y} with the common factor $E_\varepsilon$. Finally, $1+L_{\max}/\sigma\le\chi_\sigma$ and $\lceil\log_2(128\chi_\sigma^3)\rceil\le8+3\log_2\chi_\sigma$ prove~\eqref{supp:eq:convex-log-bound}.
\end{proof}

\subsection{A lower bound without strong convexity of $f$ and $g$}
For a matching $\mathcal E$, define the original-coordinate strengths
$c_e^0=c_{ij}^0=\min\{L_{ij}^{H,xy},L_{ji}^{H,yx}\}$ for $e=(i,j)$.
\begin{theorem}[Restricted-gap lower bound for non-strongly-convex functions $f$ and $g$]\label{supp:thm:convex-lower}
Fix the original coupling arrays, a matching $\mathcal E$, $R,\varepsilon>0$, positive objective smoothness bounds $L_i^f$ for every $i$ and $L_j^g$ for every $j$, and positive oracle costs. Suppose
\begin{equation}\label{supp:eq:convex-lower-regime}
 \begin{aligned}
 \varepsilon&\le L_i^fR^2/1024\quad(1\le i\le m),\\
 \varepsilon&\le L_j^gR^2/1024\quad(1\le j\le q),\\
 \varepsilon&\le c_e^0R^2/512\quad(e\in\mathcal E).
 \end{aligned}
\end{equation}
Consider the convex class with these objective upper bounds and the fixed declared coupling arrays. The instance and sufficiently large finite dimensions may depend on $\varepsilon$ and the positive cost vector, but not on the realized random seed. There is a single bilinear instance in this class with zero strong-convexity modulus in every original variable block and product comparison sets as in Assumption~\ref{ass:convex}, such that every algorithm of Definition~\ref{supp:def:span} with expected gap at most $\varepsilon$ obeys
\begin{equation}\label{supp:eq:lower-work-convex}
 \begin{aligned}
 \E W_\tau\ge\frac{\log4}{128}
 \left[\sum_{i=1}^m\tau_i^f\sqrt{L_i^fR^2/\varepsilon}
       +\sum_{j=1}^q\tau_j^g\sqrt{L_j^gR^2/\varepsilon}\right]\\
 {}+\frac{\log4}{2048}\sum_{(i,j)\in\mathcal E}
 (\tau_{H,i}^x+\tau_{H,j}^y)c_{ij}^0R^2/\varepsilon.
 \end{aligned}
\end{equation}
For each fixed primal block $i$ there is an instance requiring the first bound below, and for each fixed dual block $j$ there is an instance requiring the second:
\begin{align}
 \E N_i^f&\ge\frac{\log4}{128}\sqrt{L_i^fR^2/\varepsilon},\label{supp:eq:convex-lower-f}\\
 \E N_j^g&\ge\frac{\log4}{128}\sqrt{L_j^gR^2/\varepsilon}.\label{supp:eq:convex-lower-g}
\end{align}
For each $(i,j)\in\mathcal E$, there is also a coupling-only instance in the same fixed-array class requiring both
\[
 \E N_{H,i}^x\ge\frac{\log4}{2048}\frac{c_{ij}^0R^2}{\varepsilon},\qquad
 \E N_{H,j}^y\ge\frac{\log4}{2048}\frac{c_{ij}^0R^2}{\varepsilon}.
\]
Both endpoint bounds hold on one instance per edge; these and the separate objective instances need not coincide. If the prescribed array has only symmetric cross entries on $\mathcal E$, with all other entries zero, then $c_{ij}^0=L_{H,i}^x=L_{H,j}^y$ and both overlaps are one for every $(i,j)\in\mathcal E$. Thus the same blockwise constants enter the lower and upper coupling bounds, with the upper common logarithm remaining.
\end{theorem}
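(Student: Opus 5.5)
The plan is to reuse the architecture of the proof of Theorem~\ref{supp:thm:strong-lower}. We build one orthogonal direct sum of private chains: one smooth-gradient chain per objective block and one bilinear chain per matched edge. We then replace the geometric (linearly convergent) chains of Lemmas~\ref{supp:lem:smoothchain} and~\ref{supp:lem:edgechain} with their zero-curvature, sublinear analogues, and replace the distance criterion with the restricted gap. For an objective block $\iota$, take the classical Nesterov quadratic $\vartheta_\iota(u)=\frac{L_\iota}{8}\bigl(u_1^2+\sum_{j<d}(u_j-u_{j+1})^2+u_d^2\bigr)-\frac{L_\iota}{4}u_1$ on a private subspace of dimension $d_\iota\asymp\sqrt{L_\iota R_\iota^2/\varepsilon_\iota}$. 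Its Hessian lies in $[0,L_\iota I]$, and its tridiagonality gives the same Krylov support growth $G\mathcal K_k\subseteq\mathcal K_{k+1}$ as before. For a matched edge $e=(i,j)$, take the pure bilinear chain $\ip{\eta_e}{B_e\xi_e}-b_e(\eta_e)_1$ with $B_e=(c_e^0/2)(I-S_{d_e})$, where $\norm{B_e}_\op\le c_e^0$ respects both declared cross entries as in Section~\ref{supp:app:directsum}. The only objective term on the coupling subspaces is the linear forcing, so every block has zero strong convexity. The directional-support argument of Lemma~\ref{supp:lem:directional} carries over verbatim, because it uses only the shift structure of $B_e$ and the granted first dual direction. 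Hence after $N_{H,j}^y$ forward and $N_{H,i}^x$ transpose products, both $\xi_e$ and $\eta_e$ lie in $\mathcal K_{1+\min\{N_{H,i}^x,N_{H,j}^y\}}$.

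The second step converts support restrictions into gap lower bounds, which takes the place of the squared-distance tails. We choose product comparison sets $X=\prod X_\ell$ and $Y=\prod Y_\ell$, namely Euclidean balls around $0$ in each private subspace of radius $R_\ell$, with $\sum_\ell R_\ell^2\le R^2$. This makes the restricted gap split as a sum of private gaps. For a smooth chain the private gap is at least $\vartheta_\iota(u)-\min_{X_\iota}\vartheta_\iota$. The standard estimate then shows that any $u\in\mathcal K_k$ with $k\le d/2$ has excess at least of order $L_\iota R_\iota^2/k^2$. For the bilinear chain, we fix $x\in\mathcal K_k$ and $y\in\mathcal K_k$ and lower-bound the pure bilinear gap $\sup_{\eta}\ip{\eta}{B\xi}-\inf_{\xi'}\ip{y}{B\xi'}+\text{forcing}$ by of order $c_e^0R_e^2/k$. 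This is the Ouyang--Xu-type estimate. Concretely, we pick the forcing so that the unconstrained solution is spread uniformly over $d_e\asymp c_e^0R_e^2/\varepsilon_e$ coordinates, and we observe that a vector supported on the first $k$ coordinates leaves residual $B^\top y$ or $B\xi$ forcing of size $\gtrsim c_e^0R_e/\sqrt{k}$ against the ball. We then convert these to pathwise inequalities of the form $\Gap_\ell\ge \alpha_\ell\varepsilon_\ell\,\mathbf 1\{M_\ell<\beta_\ell\}$. The effective counts are $M_\iota=N_\iota$ and $M_e$ the cost-weighted average of the two endpoint counts. The thresholds are $\beta_\iota\asymp\sqrt{L_\iota R_\iota^2/\varepsilon}$ and $\beta_e\asymp c_e^0R_e^2/\varepsilon$. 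The regime conditions~\eqref{supp:eq:convex-lower-regime} guarantee $\beta_\ell\ge2$, so the dimensions exceed the thresholds and the constants $1024$ and $512$ absorb the integer effects.

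The third step is the cost-weighted aggregation and the single-instance and individual statements. Here I expect the main obstacle. In the strongly convex case the pathwise bound was convex in $M_\ell$, through $e^{-M_\ell/\beta_\ell}$, so Jensen applied directly. Here I would instead use a monotone convex surrogate: on the range $M_\ell\le d_\ell/2$ the smooth-chain excess is $\gtrsim L_\iota R_\iota^2/(M_\ell+1)^2$, which is convex and decreasing. A cleaner option is a Markov-type argument, $\E\Gap\le\varepsilon$ implies $\Pr[M_\ell<\beta_\ell]\le1/2$ for each chain, which gives $\E M_\ell\ge\beta_\ell/2$. For the single instance, allocate the gap budget $\varepsilon_\ell$ and the radii $R_\ell^2$ proportionally to $\tau_\ell\beta_\ell$, as in~\eqref{supp:eq:radii}, and sum $\tau_\ell\E M_\ell$. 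Because $\mathcal E$ is a matching, no counter is charged twice and $W_{\rm act}\le W_\tau$, so this produces~\eqref{supp:eq:lower-work-convex}; the $\log4$ would come from the Jensen/convexity step in its exponential form $\exp(-M/\beta)$ evaluated at the threshold. Making this allocation consistent simultaneously for the square-root scaling of the objective chains and the linear scaling of the coupling chains is the delicate part. The fallback is to prove separate lower bounds for the objective part and the coupling part of $\E W_\tau$ using half the radius for each, and then add them. The individual bounds follow by putting the entire radius on a single private chain and zeroing the other forcings. In the symmetric-matching case, the identities $c_{ij}^0=L_{H,i}^x=L_{H,j}^y$ and unit overlaps are read off directly from the declared array, exactly as in Corollary~\ref{supp:cor:optimality}.
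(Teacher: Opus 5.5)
Your route works in outline, but it differs from the paper's, which builds no sublinear chain at all.

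\textbf{The paper's proof: a reduction.} It sets an auxiliary curvature $s=256\varepsilon/R^2$. The regime conditions then read exactly $L_\iota\ge4s$ and $c_e^0\ge2s$, which are the hypotheses $\kappa\ge4$, $c\ge2$ of Theorem~\ref{supp:thm:strong-lower}. It takes that strongly convex hard instance with lower curvature $s$, Euclidean solution radius $R$, and target $\varepsilon_0=1/128$. It appends one flat scalar coordinate to every block, so every block modulus is exactly zero, and uses per-block comparison balls of radii $\norm{x^\star_{{\rm act},i}}$ and $\norm{y^\star_{{\rm act},j}}$. The bilinear saddle equations give $\Gap\ge D_{f_{\rm act}}(x,x^\star)+D_{g_{\rm act}}(y,y^\star)\ge\frac s2\norm{z_{\rm act}-z^\star_{\rm act}}^2$. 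Hence $\E\Gap\le\varepsilon$ forces relative squared error at most $1/128$. The already-proved exponential Jensen aggregation then yields $\log(1/(32\varepsilon_0))=\log4$, and substituting $s$ gives exactly $\frac{\log4}{128}$ and $\frac{\log4}{2048}$.

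\textbf{What each route buys.} The reduction takes a few lines and inherits the cost-weighted common instance for free. Your direct Nesterov and Ouyang--Xu chains need two new gap lemmas, but they are self-contained and appear to leave room for larger constants. However, $\log4$ will not arise from any exponential step in your argument. You must derive your own constants and check that they dominate the stated ones.

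\textbf{Three steps need repair.}

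First, on the common instance, $\E\Gap\le\varepsilon$ does \emph{not} imply $\Pr[M_\ell<\beta_\ell]\le1/2$ for every chain. An algorithm may abandon a chain that carries a small budget $\varepsilon_\ell$; only a weighted statement holds. Use the affine minorant $\Gap_\ell\ge2\varepsilon_\ell\mathbf 1\{M_\ell<\beta_\ell\}\ge2\varepsilon_\ell(1-M_\ell/\beta_\ell)$, valid for all $M_\ell\ge0$. Your $1/(M+1)^2$ surrogate holds only for $M\le d/2$ and is not a global convex minorant. With $\varepsilon_\ell=\pi_\ell\varepsilon$, $R_\ell^2=\pi_\ell R^2$ and $\pi_\ell=\tau_\ell\beta_\ell/S$, summing gives $\varepsilon\ge\E\Gap\ge2\varepsilon\bigl(1-\E W_{\rm act}/S\bigr)$. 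Hence $\E W_\tau\ge S/2$.

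Second, the ``delicate'' consistency you worry about disappears. Co-scaling keeps $R_\ell^2/\varepsilon_\ell=R^2/\varepsilon$, so $\beta_\iota\asymp\sqrt{L_\iota R^2/\varepsilon}$ and $\beta_e\asymp c_e^0R^2/\varepsilon$ do not depend on the allocation. This mirrors the paper, where the strongly convex rates $\beta_\ell$ are independent of the radii.

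Third, the finite Nesterov quadratic is positive definite, since $u_1^2+\sum_j(u_j-u_{j+1})^2+u_d^2=0$ forces $u=0$. Unmatched blocks would therefore have a small positive modulus. To get zero modulus in every block, as the statement asserts, append a flat coordinate as the paper does.
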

\begin{proof}[Finite-dimensional construction and gap reduction]\label{supp:app:lower-convex}
Introduce the auxiliary active curvature
\begin{equation}\label{supp:eq:activecurvature}
 s=256\varepsilon/R^2.
\end{equation}
Condition~\eqref{supp:eq:convex-lower-regime} is equivalent to $L_\iota\ge4s$ and $c_e^0\ge2s$ for all matched edges. Use the active instance of Theorem~\ref{supp:thm:strong-lower} with curvature $s$ on every block, objective upper bounds $L_\iota$, the fixed coupling array $\mathsf L_H$ in the original coordinates, and target relative error $\varepsilon_0=1/128$. Its curvature-normalized parameters are
\[
 \kappa_\iota^{\rm act}=L_\iota/s,\qquad
 \mathsf A_H^{\rm act}=\mathsf L_H/s,\qquad c_e^{\rm act}=c_e^0/s.
\]
Choose its squared radius in the curvature metric as $sR^2$, equivalently $\norm{z_{\rm act}^\star}^2=R^2$ in Euclidean coordinates. The finite-chain formulas in Appendix~\ref{supp:sec:lower} specify
all amplitudes.

Append one scalar coordinate to every original block: $\bar x_i=(x_i,t_i)$ and $\bar y_j=(y_j,u_j)$. Define
\[
 f_0(\bar x)=f_{\rm act}(x),\qquad
 g_0(\bar y)=g_{\rm act}(y),\qquad
 H_0(\bar x,\bar y)=H_{\rm act}(x,y).
\]
All objectives are constant in the added directions, so each original variable block has strong-convexity modulus exactly zero. Every oracle returns zero in those directions. The joint upper models and coupling Lipschitz bound are unchanged, and the active oracle process is identical to the active hard problem. Select a saddle point with every flat coordinate zero.

Set $\rho_{x,i}=\norm{x_{\rm act,i}^\star}$ and $\rho_{y,j}=\norm{y_{\rm act,j}^\star}$ and define
\begin{equation}\label{supp:eq:comparison-balls}
 X=\prod_i\{\bar x_i:\norm{\bar x_i}\le\rho_{x,i}\},\qquad
 Y=\prod_j\{\bar y_j:\norm{\bar y_j}\le\rho_{y,j}\}.
\end{equation}
These compact product sets contain the selected saddle point and have total squared radius $R^2$. They reveal only scalar radii, not solution directions. Even Euclidean projections onto their individual balls preserve block spans because they are scalar rescalings.

Every output, including one outside the comparison sets, satisfies
\begin{align}\label{supp:eq:lower-gap-distance}
 \Gap(\bar x,\bar y)
 &\ge\cL_0(\bar x,\bar y^\star)-\cL_0(\bar x^\star,\bar y)\nonumber\\
 &=D_{f_{\rm act}}(x,x^\star)+D_{g_{\rm act}}(y,y^\star)
 \ge\tfrac s2\norm{z_{\rm act}-z_{\rm act}^\star}^2.
\end{align}
The equality follows from the bilinear active saddle equations; the flat coordinates do not enter. Hence expected gap at most $\varepsilon$ requires
\begin{equation}\label{supp:eq:active-target}
 \E\norm{z_{\rm act}^{\out}-z_{\rm act}^\star}^2
 \le2\varepsilon/s=R^2/128.
\end{equation}
The active span process has the same oracle counts, so its lower bound under strong convexity of $f$ and $g$ gives
\[
 \E W_\tau\ge\frac{\log4}{8}
 \left[\sum_{\iota\in V}\tau_\iota\sqrt{L_\iota/s}
 +\sum_{(i,j)\in\mathcal E}(\tau_{H,i}^x+\tau_{H,j}^y)c_{ij}^0/s\right].
\]
Substitution of~\eqref{supp:eq:activecurvature} proves Theorem~\ref{supp:thm:convex-lower}. The separate statements follow from the corresponding single-private-problem constructions before appending the flat coordinates. For each private problem $\ell\in\mathcal P$, active chain dimensions $\max\{2,\lceil\beta_\ell\log128\rceil\}$ suffice; one extra coordinate per original block makes each variable block non-strongly convex.
\end{proof}
The aggregate single-edge version follows by declaring only the two cross entries on one pair nonzero, both equal to an aggregate bound $\overline L_H$. It yields the coupling work term $(\log4/2048)(\tau_{H,i_0}^x+\tau_{H,j_0}^y)\overline L_HR^2/\varepsilon$, with the same objective terms and the accuracy condition $\varepsilon\le\overline L_HR^2/512$.

\FloatBarrier

\section{Auxiliary facts and implementation details}\label{supp:app:aux}
\subsection{Monotonicity of a differentiable saddle operator}\label{supp:app:monotonicity}
Let $\psi$ be differentiable, convex in $x$, and concave in $y$. Put
$S(x,y)=(\nabla_x\psi(x,y),-\nabla_y\psi(x,y))$.
For $(x,y)$ and $(v,t)$, convexity gives
\begin{align*}
 \ip{\nabla_x\psi(x,y)}{x-v}&\ge\psi(x,y)-\psi(v,y),\\
 \ip{\nabla_x\psi(v,t)}{v-x}&\ge\psi(v,t)-\psi(x,t),
\end{align*}
and concavity gives
\begin{align*}
 \ip{-\nabla_y\psi(x,y)}{y-t}&\ge\psi(x,t)-\psi(x,y),\\
 \ip{-\nabla_y\psi(v,t)}{t-y}&\ge\psi(v,y)-\psi(v,t).
\end{align*}
Adding the four inequalities cancels the function values and proves
$\ip{S(x,y)-S(v,t)}{(x-v,y-t)}\ge0$.
This argument requires no second derivatives.

Similarly, for the full saddle function, split
\[
 \cL(x,\eta)-\cL(\xi,y)
 =[\cL(x,\eta)-\cL(x,y)]+[\cL(x,y)-\cL(\xi,y)].
\]
Concavity bounds the first difference by
$\ip{\nabla_y\cL(x,y)}{\eta-y}$, and convexity bounds the second by
$\ip{\nabla_x\cL(x,y)}{x-\xi}$. Their sum is exactly~\eqref{supp:eq:gap-residual}.

\subsection{A globally Lipschitz strongly monotone operator has a zero}\label{supp:app:existence}
Let $G:\RR^d\to\RR^d$ be $s$-strongly monotone and $L$-Lipschitz, with $s>0$. Necessarily $L\ge s$. Choose $t=s/L^2$. For the map $A(z)=z-tG(z)$,
\begin{align*}
 \norm{A(z)-A(v)}^2
 &=\norm{z-v}^2-2t\ip{G(z)-G(v)}{z-v}
                  +t^2\norm{G(z)-G(v)}^2\\
 &\le(1-s^2/L^2)\norm{z-v}^2.
\end{align*}
Thus $A$ is a contraction on the complete Euclidean space (possibly with contraction factor zero). The contraction mapping theorem gives a unique fixed point, hence a unique zero of $G$. In the normalized problem with strong convexity of $f$ and $g$ one may use $s=1$ and $L=\kappa_{\max}+L_H$. For the regularized problem use $s=\sigma$ and $L=L_F+\sigma$. This is an existence argument, not an additional algorithm or a claimed oracle-efficient solver.

\subsection{Conjugacy, lifted curvature, and represented divergence}\label{supp:app:conjugate}
From~\eqref{supp:eq:normalized-model},
\[
 \tfrac14\norm{z-v}^2\le D_\phi(z,v)
 \le\tfrac12\norm{z-v}_{\Lambda}^2.
\]
The lower bound makes $\phi$ coercive after subtraction of any linear functional. For every $u$, the maximizer in
$\phi^*(u)=\sup_z\{\ip{u}{z}-\phi(z)\}$ exists and is unique. Its first-order condition is $u=\nabla\phi(z)$, yielding
$\nabla\phi^*(u)=z$ and the inverse relations used in~\eqref{supp:eq:representation}.

To prove the $\Lambda^{-1}$ lower model of $\phi^*$ directly, fix $u$, write $a=\nabla\phi^*(u)$, and upper-bound $\phi$ at $a$:
\[
 \phi(z)\le\phi(a)+\ip{u}{z-a}+\tfrac12\norm{z-a}_\Lambda^2.
\]
For any $v$, taking the supremum of $\ip{v}{z}$ minus this upper bound gives
\[
 \phi^*(v)\ge\phi^*(u)+\ip{a}{v-u}
                         +\tfrac12\norm{v-u}_{\Lambda^{-1}}^2.
\]
Together with the quadratic in the original variables in $r$, this proves~\eqref{supp:eq:r-strong}.

More generally, if a differentiable finite function $r$ is $1$-strongly convex in $D\succ0$, its lower quadratic model at $w=\nabla r^*(\theta)$ yields
\[
 r^*(\theta+s)\le r^*(\theta)+\ip{w}{s}+\tfrac12\norm{s}_{D^{-1}}^2.
\]
This is the conjugate smoothness inequality used in~\eqref{supp:eq:correction-bound}. The relevant conjugate divergence identity follows by inserting the Fenchel equalities:
\[
 D_r(v,w)=D_{r^*}(\nabla r(w),\nabla r(v)).
\]
In particular, with $u=\nabla\phi(a)$ and $u^\star=\nabla\phi(z^\star)$,
\[
 D_{\phi^*}(u^\star,u)=D_\phi(a,z^\star),
\]
which proves~\eqref{supp:eq:physical-divergence}. At $a=z=z^0$, the $\|z-z^\star\|^2/4$ term restores the quadratic subtracted in $\phi$, so the total divergence is $D_h(z^0,z^\star)$.

\subsection{Normalized oracle implementations}\label{supp:app:normalization}
For $\widehat f(\widehat x)=f(M_x^{-1/2}\widehat x)$,
\[
 \nabla_{\widehat x_i}\widehat f(\widehat x)
 ={\mu_i^f}^{-1/2}\nabla_{x_i}f(M_x^{-1/2}\widehat x).
\]
Define $\widehat H(\widehat x,\widehat y)=H(M_x^{-1/2}\widehat x,M_y^{-1/2}\widehat y)$. Then
\begin{align*}
 \nabla_{\widehat x_i}\widehat H(\widehat x,\widehat y)
 &=(\mu_i^f)^{-1/2}\nabla_{x_i}H(M_x^{-1/2}\widehat x,M_y^{-1/2}\widehat y),\\
 \nabla_{\widehat y_j}\widehat H(\widehat x,\widehat y)
 &=(\mu_j^g)^{-1/2}\nabla_{y_j}H(M_x^{-1/2}\widehat x,M_y^{-1/2}\widehat y).
\end{align*}
One original partial-gradient evaluation yields exactly its normalized derivative block by known scalings. The same one-call implementation applies to the scalar normalization used for the regularized problem in Appendix~\ref{supp:sec:convex}. Counts and permitted block spans are preserved.

\section{Experiments and evaluation protocol}\label{supp:sec:experiments}

We evaluate exact implementations of the sampled method, its
full-assembly alternative, and full-oracle extragradient on ten small
data-backed instances, followed by a joint stronger-baseline rerun and
controls that isolate several mechanisms. Six bilinear instances also admit a
structure-exploiting primal baseline. These experiments measure both
separately charged calls and runtime. The application data determine
the matrices and labels; observations, task partitions, loss weights,
and score-shift interactions are constructed as described below.
We do not interpret these instances as field deployments or as tests
of every regime covered by the theorem.

\subsection{Execution and evaluation protocol}\label{supp:sec:experiment-protocol}
This appendix describes the benchmark models, counted methods, execution
settings, and evaluation protocol for the initial suite, the separate
EG sensitivity study, and the joint rerun. Tables and figures report
measurements from the corresponding implementations. Source code,
executable reproduction commands, and raw experimental records are not
included with this submission. The descriptions below document the
experimental procedure and public data sources.

All instances start at $z^0=0$. An independent damped Newton solve of
$F(z)=0$ produces $z_{\mathrm{ref}}$ before any timed method starts.
Its line search decreases the Euclidean operator residual. We check
the normalized residual
\[
 r_{\mathrm{ref}}=\norm{M^{-1/2}F(z_{\mathrm{ref}})}.
\]
In normalized coordinates $F$ is $1$-strongly monotone, so
$\norm{z_{\mathrm{ref}}-z^\star}_M\le r_{\mathrm{ref}}$.
The runner rejects a reference unless
\[
 r_{\mathrm{ref}}
 \le 10^{-3}\sqrt{10^{-6}\norm{z^0-z_{\mathrm{ref}}}_M^2}.
\]
The reported optimization error is
\begin{equation}\label{supp:eq:experiment-error}
 e(z)=\frac{\norm{z-z_{\mathrm{ref}}}_M^2}
                {\norm{z^0-z_{\mathrm{ref}}}_M^2}.
\end{equation}
A run stops at its first recorded point with $e(z)\le10^{-6}$.
The sampled and assembled methods are checked after complete stages;
extragradient and primal acceleration are checked every ten
iterations. Their terminal errors can consequently differ.
The reference and these diagnostics are evaluation infrastructure:
they are not supplied oracles or inputs to the stage updates.
This observed first-crossing criterion does not establish an
expected-error guarantee from five realizations. It also does not
replace the theorem's prescribed stage count.

Each randomized method uses seeds $0,1,2,3,4$. We repeat deterministic
methods five times to measure timing variability; those repetitions
have identical call counts and iterates. Optimization tables report arithmetic
means and sample standard deviations (SD) across the five runs.
These SDs, including the plotted error bars, are not confidence intervals. A fixed
shuffle seed, $4821$, orders the methods and seeds within each
instance. Timed methods execute sequentially. A started stage always
executes all $K$ iterations, including iterations after its sampled
output index, and retains the returned auxiliary coordinate.
The configured limits are 48 stages or 100,000 baseline iterations
and a soft 120-second solver-time limit. The recorded status
distinguishes target attainment from a resource limit. All 180 recorded
runs in the initial ten-instance suite reached $e(z)\le10^{-6}$.

Measurements use a single CPU thread on an Intel Core i5-13420H,
Python 3.14 and NumPy 2.5.3. Dataset parsing, construction of the
instances, reference solves, and diagnostic evaluations are excluded
from solver time. Normalization, component setup, initial evaluations,
stage reinitialization, sampling, arithmetic, and cache maintenance
are included. Runtime describes this NumPy implementation on these
small instances; it is not an arithmetic-complexity bound.
The recorded cache-byte statistic counts selected retained numerical
state arrays. It excludes dataset storage, Python objects, component
index arrays, pre-sampled indices, parameters, and transient
arithmetic, and is therefore not peak resident memory.

Implementation checks compare the compact and expanded updates,
partial and fused returns, and numerical derivatives against the
supplied certificates. The global bounds below establish theorem
applicability.

\subsection{Methods and accounting}\label{supp:sec:experiment-methods}
The \emph{sampled} method implements Algorithm~\ref{alg:epoch} with the
default allocation~\eqref{supp:eq:dense-weights} and unchanged certified
parameters. The \emph{assembly} method uses the construction of
Section~\ref{supp:sec:assembly-guarantee}. Both use the prescribed
normalized oracle wrappers; assembly uses fused matrix operations where
possible and charges each returned coupling partial once.
Every benchmark coupling row has nonempty support and variation
coefficient above one, so the positive sampling floor is inactive:
the unified rules preserve their sampling probabilities, stage lengths,
and updates.

\emph{Extragradient} uses two evaluations of the full saddle operator
per iteration, with normalized step
\[
 \gamma=\frac1{2(\kappa_{\max}+L_H)}.
\]
Each full objective-gradient or coupling-gradient evaluation charges
every returned partial once. Its fused implementation is included in the runtime
comparison. The bound is global and certified; it is not estimated
from the reference or optimized through a parameter search in the primary
suite. A separate, bounded step-factor sensitivity study is reported in
Section~\ref{supp:sec:experiment-sensitivity}.

\emph{Primal AGD} is available only for the bilinear traffic and graph
models. It analytically maximizes over $y$ using the known quadratic
objective $g$. In normalized coordinates the dual maximizer is
$\widehat y(\widehat x)=-\widehat T_{H,y}(\widehat x,0)$,
and the resulting primal objective is $1$-strongly convex with valid
smoothness bound
$L_{\mathrm{pr}}=\max_i\kappa_i^f+L_H^2$.
We use step $1/L_{\mathrm{pr}}$ and momentum
$(\sqrt{L_{\mathrm{pr}}}-1)/(\sqrt{L_{\mathrm{pr}}}+1)$.
Dual reconstruction calls at the output checkpoints are counted.
The implementation's fused objective-gradient call returns all objective
partial gradients,
including its unused dual gradient, and charges them all.
This baseline exploits known analytic structure beyond the supplied
general partial-gradient model. The nonlinear interaction cannot
be eliminated by this linear dual formula, even though its separate
$g$ is quadratic, so we do not run this baseline there.

Each requested partial computes only its output block. Counters include
initialization, updates, corrections, and baseline output reconstruction;
reference and diagnostic evaluations are excluded. Records retain
per-block counts and family sums. Prescribed sampling ratios alone do
not demonstrate a convergence benefit.

\subsection{Benchmark models}\label{supp:sec:experiment-models}
The six base instances use the three model families below. The four
additional high-curvature instances are defined in
Section~\ref{supp:sec:experiment-sweep-results}.

\subsubsection{Traffic-demand corrections from simulated link counts}
Origin--destination estimation from link counts is an established
inverse problem \citep{ODPrimalDual}. We use the public Sioux Falls
and Anaheim network and trip files from
\citet{TransportationNetworks}. Our task estimates signed corrections
to a prior demand under fixed routing. It does not solve congestion
equilibrium or the constrained joint trip-distribution/assignment
problem.

For each network, four origin IDs are selected at equally spaced
positions in the sorted list of origins. For each selected origin,
we retain its twelve largest positive off-diagonal demands, breaking
ties by destination ID. The resulting $x\in\RR^{48}$ has four
blocks of dimension twelve. Each demand is assigned to a single
free-flow shortest route, respecting the network's
\texttt{FIRST THRU NODE} restriction. Let $A$ be the link--route
incidence matrix. Using seed $20261005$, we choose 32 distinct
active links as sensors and put the remaining active links in a
held-out set. The dual vector has four blocks of dimension eight.
Let $A_{\mathrm{obs}}$ be the observed-link rows and set
\[
 C=\frac{1.5}{\norm{A_{\mathrm{obs}}}_\op}A_{\mathrm{obs}}.
\]

The supplied demands $d^{\mathrm{true}}$ serve as simulation truth.
The prior is $d^{\mathrm{prior}}_a=d^{\mathrm{true}}_a\exp(0.25\xi_a)$,
with independent standard normal $\xi_a$, and the unit of a correction
is $s=\operatorname{median}_a d^{\mathrm{true}}_a$. Hence
$x^{\mathrm{true}}=(d^{\mathrm{true}}-d^{\mathrm{prior}})/s$.
We simulate the residual counts
\[
 b=Cx^{\mathrm{true}}+\epsilon,\qquad
 \epsilon\sim\mathcal N(0,\sigma_b^2I),\qquad
 \sigma_b=0.02\max\{\norm{Cx^{\mathrm{true}}}/\sqrt{32},0.01\}.
\]
Thus the noise level refers to the modeled link-count corrections,
rather than the total network flows. Unselected demands are fixed at
the supplied values; their background flows cancel when the prior
counts are subtracted. All sensor observations are simulated.

The saddle model is
\begin{equation}\label{supp:eq:traffic-experiment}
 \min_x\max_y
 \left\{\tfrac12x^\top Qx+y^\top(Cx-b)
                    -\tfrac{\nu}{2}\norm{y}^2\right\},
 \qquad \nu=0.4.
\end{equation}
Write $v=\boldsymbol 1_{48}/\sqrt{48}$ and
$Q=D+0.04vv^\top$. On primal block $i$, the twelve diagonal entries
of $D$ are geometrically spaced between $0.08$ and
$0.08\bar\kappa_i$, with
$\bar\kappa=(1,2,8,20)$. The rank-one term makes $f$ nonseparable.
Since $Q\succeq0.08I$, the primal lower certificate is
$\mu_i^f=0.08$. For a perturbation $u=(u_1,\ldots,u_4)$,
\[
 (v^\top u)^2
 \le4\sum_i\norm{v_i}^2\norm{u_i}^2
 =\sum_i\norm{u_i}^2.
\]
Consequently the joint diagonal upper certificate is
$L_i^f=\max_{a\in i}D_{aa}+0.04$, rather than a bound obtained by varying one block at a time.
The dual certificates are $\mu_j^g=L_j^g=0.4$.
For the submatrix $C_{ji}$ mapping primal block $i$ to dual block $j$,
the only nonzero coupling bounds are
\[
 L_{ij}^{H,xy}=L_{ji}^{H,yx}=\norm{C_{ji}}_\op.
\]
These are global first-order bounds because the interaction is
bilinear. Both traffic instances have $\Delta=4$ for each row.

Eliminating $y$ gives the regularized estimation objective
$x^\top Qx/2+\norm{Cx-b}^2/(2\nu)$.
We report relative demand error
$\norm{d^{\mathrm{prior}}+sx-d^{\mathrm{true}}}/\norm{d^{\mathrm{true}}}$,
the fraction of negative recovered demands, observed residual error
$\norm{Cx-b}/\norm{b}$, and held-out correction error
$\norm{A_{\mathrm{hold}}(x-x^{\mathrm{true}})}/\norm{A_{\mathrm{hold}}x^{\mathrm{true}}}$,
where $A_{\mathrm{hold}}$ contains the held-out incidence rows.
The recovered demand is $d^{\mathrm{prior}}+sx$; positivity is not
enforced. The 48 unknown corrections and 32 sensor observations also
make recovery underdetermined, so optimization accuracy alone cannot
establish demand-recovery accuracy.

\subsubsection{Graph-regularized logistic learning}
Graph regularization links related learning tasks
\citep{MultitaskGraphs}. We construct four task partitions from
LIBSVM's a9a version of the Adult dataset
\citep{LIBSVMData,AdultData}. Among the 123 supplied features, we
select the fifteen most frequent nonzero features in the full
training file, with stable ties by feature index, append an intercept,
and normalize each resulting row by the maximum of its Euclidean
norm and one. The held-out a9a test file uses exactly the same feature
selection and transformation. A permutation with seed $314159$
provides disjoint local training subsets of sizes
$128,256,512,1024$; only these 1,920 records train the models.
These are artificial random partitions, not naturally federated
datasets. The metadata retain the selected feature IDs and training
row IDs.

In both training and test files, each signed class label
$y_{\mathrm{label}}\in\{-1,+1\}$ is mapped to
$c=(y_{\mathrm{label}}+1)/2\in\{0,1\}$.
For $\ell(s;c)=\log(1+e^s)-cs$, let $X_i,c_i$ denote the local data,
$n_i$ its sample size, and define
\[
 f(x)=\sum_{i=1}^4\left\{
  \frac{0.2}{2}\norm{x_i}^2
  +\frac{w_i}{n_i}\sum_{r=1}^{n_i}\ell((X_ix_i)_r;c_{ir})\right\}.
\]
The four primal blocks each have dimension sixteen. The weights
$w_i=0.2(\kappa_i^f-1)4n_i/\norm{X_i}_\op^2$ make the certified
condition numbers $(\kappa_i^f)_{i=1}^4=(2,4,8,20)$. They introduce controlled
curvature heterogeneity, as well as the different local evaluation
costs due to the sample sizes. Since $0\le\ell''\le1/4$, the joint
objective curvature certificate is
\begin{equation}\label{supp:eq:logistic-player-certificate}
 \mu_i^f=0.2,\qquad
 L_i^f=0.2+\frac{w_i\norm{X_i}_\op^2}{4n_i}.
\end{equation}
The objective is separable across tasks, so these bounds form a valid
joint block-diagonal model.

The cycle has edges $(1,2),(2,3),(3,4),(4,1)$; the hub has edges
$(1,2),(1,3),(1,4)$. Edge weights $a_e$ are geometrically spaced
between $0.25$ and $4$ in the displayed edge order. Define
$(Cx)_e=\sqrt{a_e}(x_i-x_j)$ and use
\[
 H(x,y)=y^\top Cx,\qquad g(y)=\frac{0.5}{2}\norm{y}^2.
\]
Each edge is a sixteen-dimensional dual block. This is the exact
saddle representation of
$f(x)+\sum_ea_e\norm{x_i-x_j}^2/(2\cdot0.5)$.
The dual certificates are $\mu_e^g=L_e^g=0.5$; coupling bounds are
the block norms of $C$, as in the traffic model. Cycle rows have
$\Delta=2$. Hub primal rows have $\Delta=2$, while its dual rows
have $\Delta=3$.

Application metrics use the mean of the four test scores and report
its binary accuracy and logistic loss on the supplied test file.
These metrics characterize the chosen optimization objective;
hyperparameters and feature selection were not tuned for predictive
performance.

\subsubsection{Nonlinear group score-shift interaction}
This construction uses the same local logistic objectives and test
transformation, with a genuinely nonlinear coupling on real feature
rows and labels. It is a benchmark of group score shifts, rather than
a claim that an arbitrary robust-learning model satisfies our
assumptions.
For each task, let $\widetilde X_i$ be its first 128 training rows,
multiplied by $a_i$, where $(a_1,a_2,a_3,a_4)=(1,1.5,2,3)$;
retain the corresponding labels
$\widetilde c_i$. Thus each coupling task uses 128 records and
$N=512$ is the common normalization. Objective partials still use all
their local records.

In the local case, $y\in\RR^4$ has scalar blocks and $B_i$ has only
column $i$ nonzero, equal to the constant
$b_i$ in each row, with $(b_1,b_2,b_3,b_4)=(1,1.5,2,3)$. In the shared case,
$y\in\RR^5$ adds a fifth column of ones to every $B_i$.
The interaction and dual objective are
\begin{align}\label{supp:eq:nonlinear-experiment}
 H(x,y)&=\frac1N\sum_{i=1}^4\sum_{r=1}^{128}
  \ell((\widetilde X_ix_i+B_iy)_r;\widetilde c_{ir})
       -\frac1{8N}\sum_{i=1}^4\norm{B_iy}^2,\nonumber\\
 g(y)&=\frac{0.2}{2}\norm{y}^2.
\end{align}
The interaction's quadratic correction certifies concavity globally;
the separate $g$ supplies positive objective curvature. To verify this,
let $W_i=\diag(\ell''(\widetilde X_ix_i+B_iy;
\widetilde c_i))$, so $0\preceq W_i\preceq I/4$. The Hessian blocks
are
\begin{align}
 \nabla^2_{x_ix_i}H&=\widetilde X_i^\top W_i\widetilde X_i/N,
 &\nabla^2_{x_ix_t}H&=0\quad(i\ne t),\nonumber\\
 \nabla^2_{x_iy}H&=\widetilde X_i^\top W_iB_i/N,
 &\nabla^2_{yy}H&=\sum_iB_i^\top(W_i-I/4)B_i/N.
 \label{supp:eq:nonlinear-hessian}
\end{align}
Hence $H$ is convex in the entire $x$ vector and concave in the
entire $y$ vector. The mixed Hessian depends on the scores, so
the coupling is nonlinear. All its Hessian blocks are globally
bounded. Writing $A_i=\norm{\widetilde X_i}_\op$ and
$\beta_{ij}=\norm{(B_i)_{:,j}}$, valid original-coordinate arrays are
\begin{align}
 L_{it}^{H,xx}&=\boldsymbol 1_{\{i=t\}}\frac{A_i^2}{4N},
 &L_{ij}^{H,xy}=L_{ji}^{H,yx}&=\frac{A_i\beta_{ij}}{4N},\nonumber\\
 L_{ju}^{H,yy}&=\frac1{4N}\sum_i\beta_{ij}\beta_{iu}.
 \label{supp:eq:nonlinear-experiment-bounds}
\end{align}
The last bound uses $\norm{W_i-I/4}_\op\le1/4$.
Integrating the Hessian along a line segment gives the required
global first-order inequalities. Zero coefficients certify absent
dependence. The bounds are normalized by both input and output
curvature exactly as in Section~\ref{sec:model}; supports and
multiplicities come from their zero pattern. The resulting overlap
is $\Delta=2$ in the local case and $\Delta=9$ in the shared case.
The shared construction changes dimension and coupling variation
as well as overlap; it is not an isolated intervention on $\Delta$.

\subsection{Application diagnostics}\label{supp:sec:experiment-applications}
Table~\ref{tab:exp-applications} reports the application metrics at the
independent reference solution, with prior traffic errors at the initial
point. Each entry characterizes its constructed objective and uses the
application metrics defined in Section~\ref{supp:sec:experiment-models}.

\begin{table}[ht]\centering\small\setlength{\tabcolsep}{6pt}
\caption{Application diagnostics. Except for prior demand error, values are evaluated once at the independent reference solution. Traffic observations are simulated; traffic errors are relative Euclidean norms, and negative demands are reported as a fraction of the estimated demands. Classification uses the mean of the four test scores.}
\label{tab:exp-applications}
\begin{tabular}{@{}lrrrrr@{}}\toprule
Traffic case & \shortstack{Prior demand\\error} & \shortstack{Fitted demand\\error} & \shortstack{Negative\\fraction} & \shortstack{Observed\\residual} & \shortstack{Held-out\\error}\\\midrule
Sioux Falls & 0.1710 & 0.1512 & $0/48$ & 0.2310 & 0.7899\\
Anaheim & 0.1680 & 0.1168 & $4/48$ & 0.2158 & 0.3210\\
\bottomrule\end{tabular}
\par\medskip
\begin{tabular}{@{}lrr@{}}\toprule
Classification case & Test accuracy & Test log loss\\\midrule
Graph cycle & 0.7693 & 0.4720\\
Graph hub & 0.7693 & 0.4726\\
Nonlinear local & 0.7650 & 0.4847\\
Nonlinear shared & 0.7650 & 0.4847\\
Graph cycle, $200$ & 0.8168 & 0.4043\\
Graph cycle, $2000$ & 0.8298 & 0.3774\\
Nonlinear local, $200$ & 0.7900 & 0.4516\\
Nonlinear local, $2000$ & 0.7980 & 0.4344\\
\bottomrule\end{tabular}
\end{table}

\FloatBarrier

\subsection{Base solver results}\label{supp:sec:experiment-base-results}
Table~\ref{tab:exp-base} reports all six base instances at the common
distance target~\eqref{supp:eq:experiment-error}.

\begin{table}[ht]
\centering\small\setlength{\tabcolsep}{4pt}
\caption{Measured work to observed relative squared distance $10^{-6}$: base configurations. Entries are means $\pm$ sample standard deviations over five runs. Oracle totals are in thousands; time is in seconds. EG and primal AGD have deterministic counts and repeated timing measurements.}
\label{tab:exp-base}
\begin{tabular}{@{}llrrrrr@{}}\toprule
Case & Method & $N^f/10^3$ & $N^g/10^3$ & $N_H^x/10^3$ & $N_H^y/10^3$ & Seconds\\\midrule
Sioux Falls & Sampled & $4.38\,\pm\,0.89$ & $1.32\,\pm\,0.28$ & $38.07\,\pm\,8.08$ & $38.21\,\pm\,8.01$ & $0.769\,\pm\,0.164$\\
 & Assembly & $5.07\,\pm\,0.58$ & $1.50\,\pm\,0.17$ & $37.62\,\pm\,4.46$ & $37.62\,\pm\,4.46$ & $0.222\,\pm\,0.025$\\
 & EG & $3.12\,\pm\,0.00$ & $3.12\,\pm\,0.00$ & $3.12\,\pm\,0.00$ & $3.12\,\pm\,0.00$ & $0.0106\,\pm\,0.0001$\\
 & Primal AGD & $0.32\,\pm\,0.00$ & $0.32\,\pm\,0.00$ & $0.32\,\pm\,0.00$ & $0.35\,\pm\,0.00$ & $0.0030\,\pm\,0.0001$\\
\addlinespace[3pt]
Anaheim & Sampled & $4.93\,\pm\,1.02$ & $1.48\,\pm\,0.32$ & $43.46\,\pm\,8.75$ & $43.52\,\pm\,8.79$ & $0.878\,\pm\,0.172$\\
 & Assembly & $4.38\,\pm\,0.79$ & $1.30\,\pm\,0.24$ & $36.79\,\pm\,6.41$ & $36.79\,\pm\,6.41$ & $0.207\,\pm\,0.036$\\
 & EG & $3.44\,\pm\,0.00$ & $3.44\,\pm\,0.00$ & $3.44\,\pm\,0.00$ & $3.44\,\pm\,0.00$ & $0.0117\,\pm\,0.0004$\\
 & Primal AGD & $0.40\,\pm\,0.00$ & $0.40\,\pm\,0.00$ & $0.40\,\pm\,0.00$ & $0.44\,\pm\,0.00$ & $0.0037\,\pm\,0.0001$\\
\addlinespace[3pt]
Graph cycle & Sampled & $5.43\,\pm\,1.12$ & $1.51\,\pm\,0.31$ & $22.37\,\pm\,4.61$ & $21.17\,\pm\,4.35$ & $0.624\,\pm\,0.126$\\
 & Assembly & $5.21\,\pm\,0.93$ & $1.46\,\pm\,0.23$ & $27.34\,\pm\,4.85$ & $27.34\,\pm\,4.85$ & $0.311\,\pm\,0.058$\\
 & EG & $2.48\,\pm\,0.00$ & $2.48\,\pm\,0.00$ & $2.48\,\pm\,0.00$ & $2.48\,\pm\,0.00$ & $0.052\,\pm\,0.001$\\
 & Primal AGD & $0.28\,\pm\,0.00$ & $0.28\,\pm\,0.00$ & $0.28\,\pm\,0.00$ & $0.31\,\pm\,0.00$ & $0.0077\,\pm\,0.0003$\\
\addlinespace[3pt]
Graph hub & Sampled & $5.42\,\pm\,0.99$ & $1.14\,\pm\,0.20$ & $18.90\,\pm\,3.41$ & $19.93\,\pm\,3.63$ & $0.567\,\pm\,0.104$\\
 & Assembly & $6.83\,\pm\,1.32$ & $1.44\,\pm\,0.27$ & $35.06\,\pm\,6.87$ & $26.29\,\pm\,5.15$ & $0.389\,\pm\,0.080$\\
 & EG & $2.48\,\pm\,0.00$ & $1.86\,\pm\,0.00$ & $2.48\,\pm\,0.00$ & $1.86\,\pm\,0.00$ & $0.051\,\pm\,0.001$\\
 & Primal AGD & $0.28\,\pm\,0.00$ & $0.21\,\pm\,0.00$ & $0.28\,\pm\,0.00$ & $0.23\,\pm\,0.00$ & $0.00717\,\pm\,0.00004$\\
\addlinespace[3pt]
Nonlinear local & Sampled & $4.62\,\pm\,0.35$ & $1.28\,\pm\,0.12$ & $4.96\,\pm\,0.34$ & $5.92\,\pm\,0.42$ & $0.361\,\pm\,0.027$\\
 & Assembly & $4.45\,\pm\,0.90$ & $1.23\,\pm\,0.27$ & $11.78\,\pm\,2.39$ & $11.78\,\pm\,2.39$ & $0.344\,\pm\,0.070$\\
 & EG & $2.00\,\pm\,0.00$ & $2.00\,\pm\,0.00$ & $2.00\,\pm\,0.00$ & $2.00\,\pm\,0.00$ & $0.064\,\pm\,0.001$\\
\addlinespace[3pt]
Nonlinear shared & Sampled & $5.11\,\pm\,0.97$ & $1.75\,\pm\,0.33$ & $12.24\,\pm\,2.35$ & $19.01\,\pm\,3.66$ & $0.856\,\pm\,0.166$\\
 & Assembly & $4.62\,\pm\,1.35$ & $1.58\,\pm\,0.45$ & $13.19\,\pm\,3.76$ & $16.49\,\pm\,4.70$ & $0.398\,\pm\,0.129$\\
 & EG & $2.08\,\pm\,0.00$ & $2.60\,\pm\,0.00$ & $2.08\,\pm\,0.00$ & $2.60\,\pm\,0.00$ & $0.067\,\pm\,0.001$\\
\bottomrule\end{tabular}
\end{table}

Table~\ref{tab:exp-base} shows that extragradient has lower runtime
and fewer primal-objective calls than either stage method on all six
base instances. Sampling reduces coupling calls relative to assembly
on the graph instances and the local nonlinear construction, but
does not consistently reduce runtime. Traffic and shared nonlinear
cases lack the same coupling-call advantage. Primal AGD is fastest
on the bilinear base instances.

\FloatBarrier

\subsection{Exploratory loss-weight sweep}\label{supp:sec:experiment-sweep-results}
For the graph-cycle and local nonlinear constructions, we additionally
replace the fourth certified condition number, $20$, by $200$ and
$2000$. The data, row subsets, ridge penalties, and coupling remain
fixed; only the fourth local loss weight changes
according to~\eqref{supp:eq:logistic-player-certificate}.
Parameter-selection rules and numerical multipliers remain fixed;
derived parameters are recomputed for each instance. These include
$S,p,\eta,K$, the extragradient step, and the primal AGD smoothness
bound, step, and momentum.
These four extra instances probe the cost of evaluating easy objective
blocks when another block is much stiffer. They are exploratory
stress regimes, motivated by the initially unfavorable comparisons
on the six base instances. We retain the base results, and do not
tune method parameters for the added cases. Increasing a loss weight
also changes the minimizer, the relative task emphasis, and the
denominator in~\eqref{supp:eq:experiment-error}. For graph-cycle, the
absolute squared-distance cutoff increases from $3.39\times10^{-6}$
to $1.81\times10^{-5}$ and $6.35\times10^{-5}$, approximately nineteenfold
between the endpoints. For nonlinear-local it increases from
$3.41\times10^{-6}$ to $1.19\times10^{-5}$ and $2.74\times10^{-5}$.
The crossover in this sweep alone cannot be attributed solely to
conditioning. The fixed-saddle control below keeps this denominator
constant; neither finite sweep establishes an asymptotic rate.

\begin{table}[H]
\centering\small\setlength{\tabcolsep}{4pt}
\caption{Measured work to observed relative squared distance $10^{-6}$: objective loss-weight sweep. Entries are means $\pm$ sample standard deviations over five runs. Oracle totals are in thousands; time is in seconds. EG and primal AGD have deterministic counts and repeated timing measurements.}
\label{tab:exp-stress}
\begin{tabular}{@{}llrrrrr@{}}\toprule
Case & Method & $N^f/10^3$ & $N^g/10^3$ & $N_H^x/10^3$ & $N_H^y/10^3$ & Seconds\\\midrule
Graph cycle, $200$ & Sampled & $11.30\,\pm\,2.41$ & $1.59\,\pm\,0.35$ & $24.29\,\pm\,5.11$ & $22.96\,\pm\,4.81$ & $0.866\,\pm\,0.186$\\
 & Assembly & $10.96\,\pm\,3.18$ & $1.59\,\pm\,0.44$ & $29.69\,\pm\,8.55$ & $29.69\,\pm\,8.55$ & $0.533\,\pm\,0.156$\\
 & EG & $16.32\,\pm\,0.00$ & $16.32\,\pm\,0.00$ & $16.32\,\pm\,0.00$ & $16.32\,\pm\,0.00$ & $0.313\,\pm\,0.001$\\
 & Primal AGD & $0.44\,\pm\,0.00$ & $0.44\,\pm\,0.00$ & $0.44\,\pm\,0.00$ & $0.48\,\pm\,0.00$ & $0.0113\,\pm\,0.0001$\\
\addlinespace[3pt]
Graph cycle, $2000$ & Sampled & $28.40\,\pm\,6.99$ & $1.62\,\pm\,0.40$ & $24.40\,\pm\,6.05$ & $22.91\,\pm\,5.64$ & $1.514\,\pm\,0.363$\\
 & Assembly & $25.50\,\pm\,6.33$ & $1.45\,\pm\,0.34$ & $27.42\,\pm\,6.77$ & $27.42\,\pm\,6.77$ & $1.072\,\pm\,0.274$\\
 & EG & $171.44\,\pm\,0.00$ & $171.44\,\pm\,0.00$ & $171.44\,\pm\,0.00$ & $171.44\,\pm\,0.00$ & $3.171\,\pm\,0.013$\\
 & Primal AGD & $1.28\,\pm\,0.00$ & $1.28\,\pm\,0.00$ & $1.28\,\pm\,0.00$ & $1.41\,\pm\,0.00$ & $0.0314\,\pm\,0.0003$\\
\addlinespace[3pt]
Nonlinear local, $200$ & Sampled & $10.36\,\pm\,1.87$ & $1.51\,\pm\,0.28$ & $5.78\,\pm\,1.09$ & $6.87\,\pm\,1.21$ & $0.612\,\pm\,0.106$\\
 & Assembly & $11.25\,\pm\,3.13$ & $1.60\,\pm\,0.43$ & $15.27\,\pm\,4.28$ & $15.27\,\pm\,4.28$ & $0.657\,\pm\,0.190$\\
 & EG & $14.32\,\pm\,0.00$ & $14.32\,\pm\,0.00$ & $14.32\,\pm\,0.00$ & $14.32\,\pm\,0.00$ & $0.441\,\pm\,0.004$\\
\addlinespace[3pt]
Nonlinear local, $2000$ & Sampled & $27.66\,\pm\,3.23$ & $1.60\,\pm\,0.21$ & $6.08\,\pm\,0.71$ & $7.28\,\pm\,0.88$ & $1.281\,\pm\,0.145$\\
 & Assembly & $29.16\,\pm\,4.48$ & $1.65\,\pm\,0.22$ & $15.60\,\pm\,2.47$ & $15.60\,\pm\,2.47$ & $1.342\,\pm\,0.208$\\
 & EG & $124.40\,\pm\,0.00$ & $124.40\,\pm\,0.00$ & $124.40\,\pm\,0.00$ & $124.40\,\pm\,0.00$ & $3.793\,\pm\,0.013$\\
\bottomrule\end{tabular}
\end{table}

The controlled sweep in Table~\ref{tab:exp-stress} changes this
comparison: at condition bound $2000$, both stage methods use fewer
primal-objective calls and less runtime than fixed-step extragradient.
Sampling is not uniformly better than assembly, and primal AGD remains
much faster on the bilinear graph sweep. For the nonlinear $200$ and
$2000$ cases, sampled/assembly mean time differences are small relative
to run standard deviations and establish no reliable ordering.
These comparisons use certified parameters without tuning and do not
establish asymptotic oracle orders; deterministic timing repetitions
are not independent problem draws.

Table~\ref{tab:exp-block-counts} separates the sampled method's four
primal-objective counters. On the graph sweep, mean block-1 counts are
approximately $660,704,679$, while block-4 counts grow to
$2340,7954,25108$. The fourth block absorbs most of the additional
objective calls as its curvature increases. This illustrates the
individual-oracle accounting at the observed stopping points; the
finite sweep does not establish the theorem's asymptotic rates.

\begin{table}[H]\centering\small\setlength{\tabcolsep}{6pt}
\caption{Sampled primal-objective calls across the curvature sweep. Entries are means $\pm$ sample SD over five seeds, rounded to whole calls, at the first recorded point satisfying $e(z)\le10^{-6}$. Each $N_i^f$ counts calls to $\nabla_{x_i}f$, including initialization and all completed-stage work. The first three condition numbers remain $(2,4,8)$.}
\label{tab:exp-block-counts}
\begin{tabular}{@{}lrrrrr@{}}\toprule
Case & $\kappa_4^f$ & $N_1^f$ & $N_2^f$ & $N_3^f$ & $N_4^f$\\\midrule
Graph cycle & 20 & $660\,\pm\,157$ & $961\,\pm\,183$ & $1471\,\pm\,307$ & $2340\,\pm\,481$\\
Graph cycle & 200 & $704\,\pm\,168$ & $1046\,\pm\,224$ & $1594\,\pm\,351$ & $7954\,\pm\,1665$\\
Graph cycle & 2000 & $679\,\pm\,175$ & $1067\,\pm\,255$ & $1548\,\pm\,407$ & $25108\,\pm\,6155$\\
\addlinespace[3pt]
Nonlinear local & 20 & $558\,\pm\,30$ & $879\,\pm\,66$ & $1228\,\pm\,111$ & $1959\,\pm\,152$\\
Nonlinear local & 200 & $645\,\pm\,133$ & $995\,\pm\,195$ & $1434\,\pm\,218$ & $7287\,\pm\,1325$\\
Nonlinear local & 2000 & $678\,\pm\,72$ & $1032\,\pm\,108$ & $1541\,\pm\,166$ & $24413\,\pm\,2897$\\
\bottomrule\end{tabular}
\end{table}

\FloatBarrier

\begin{figure}[H]
\centering
\includegraphics[width=\textwidth]{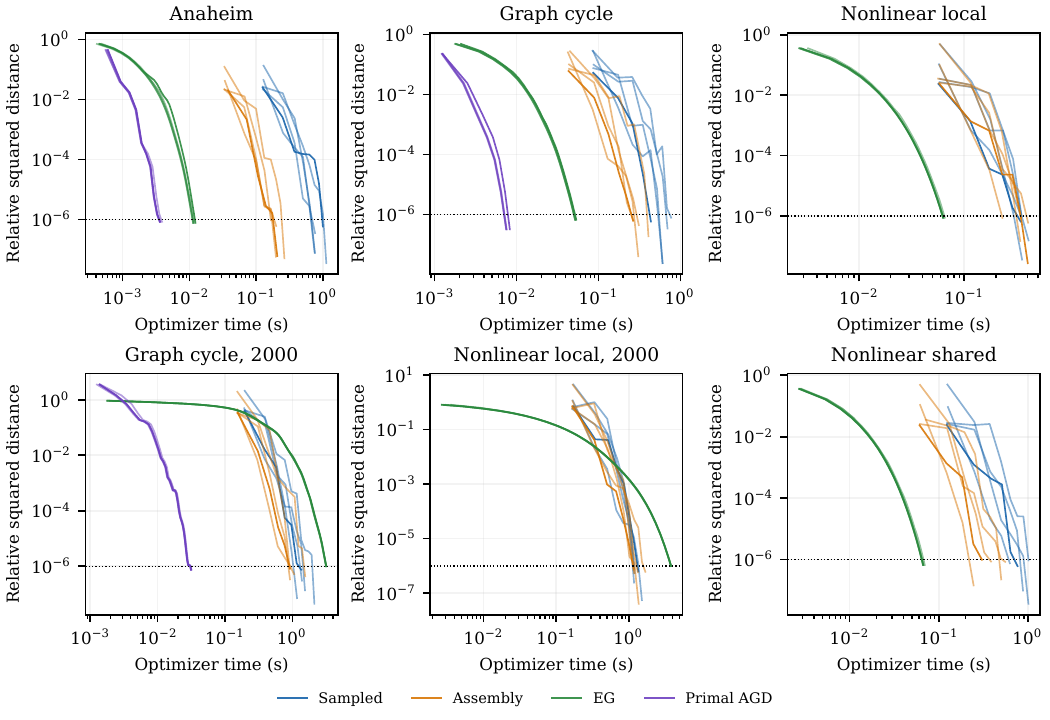}
\caption{Individual convergence trajectories for six representative
instances against measured solver time. The common error is defined
in~\eqref{supp:eq:experiment-error}. Runs are recorded at complete
stage or ten-iteration checkpoints; the associated work is charged
in full. Deterministic methods repeat only for timing variability.}
\label{supp:fig:experiment-convergence}
\end{figure}

\begin{figure}[H]
\centering
\includegraphics[width=\textwidth]{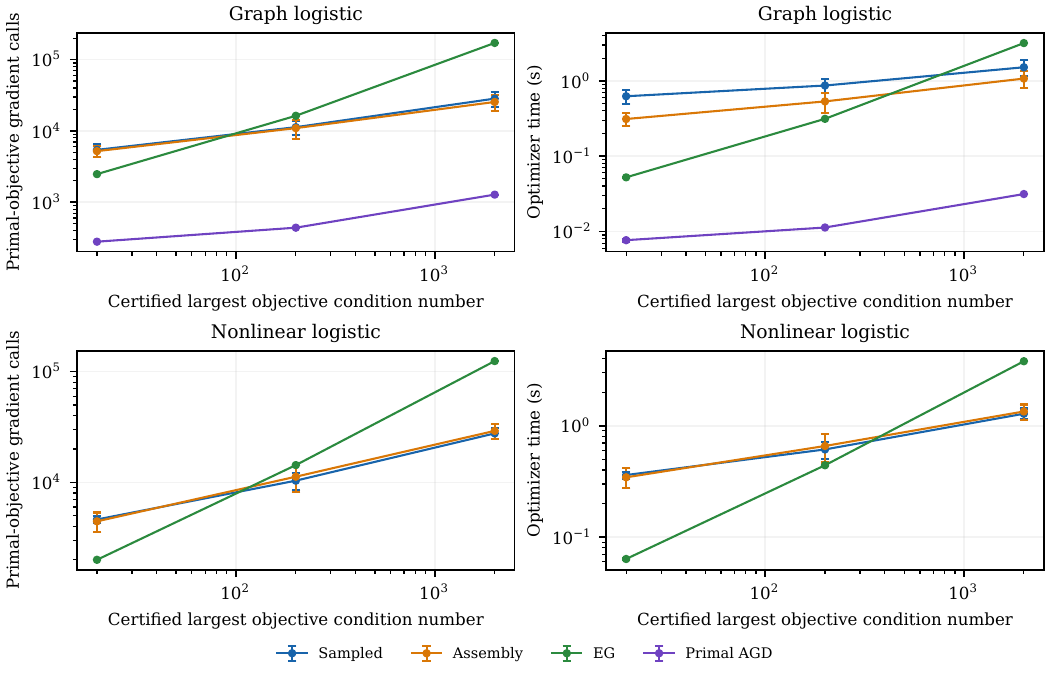}
\caption{Loss-weight sweep over the largest objective condition bound: primal-objective
partial-gradient calls and solver time at the common recorded
accuracy target. Points show means and error bars show sample
standard deviations over five runs. Counters include all completed-stage work.}
\label{supp:fig:experiment-oracles}
\end{figure}

\FloatBarrier

\subsection{Separate EG step-factor sensitivity}\label{supp:sec:experiment-sensitivity}
On graph-cycle-$2000$ and nonlinear-local-$2000$, a predeclared follow-up
compares $\gamma=a/(\kappa_{\max}+L_H)$ for $a=0.5$ and $0.9$, with
five deterministic repetitions per variant and the same reference,
accuracy target, and checkpoints described in
Section~\ref{supp:sec:experiment-protocol}.

\begin{table}[!htbp]
\centering\small\setlength{\tabcolsep}{4pt}
\caption{Separate EG step-factor sensitivity at $\kappa_{\max}=2000$. Both factors use the same certified bound and five sequential timing repetitions in one local environment. Oracle counts are deterministic totals per run; times are means $\pm$ sample standard deviations in seconds. These timings are not combined with the historical sampled timings.}
\label{tab:exp-eg-sensitivity}
\begin{tabular}{@{}llrrrrrr@{}}\toprule
Case & Factor & Iterations & $N^f$ & $N^g$ & $N_H^x$ & $N_H^y$ & Seconds\\\midrule
Graph cycle & 0.5 & 21430 & 171440 & 171440 & 171440 & 171440 & $3.328\,\pm\,0.017$\\
Graph cycle & 0.9 & 11890 & 95120 & 95120 & 95120 & 95120 & $1.851\,\pm\,0.028$\\
Nonlinear local & 0.5 & 15550 & 124400 & 124400 & 124400 & 124400 & $3.923\,\pm\,0.065$\\
Nonlinear local & 0.9 & 8640 & 69120 & 69120 & 69120 & 69120 & $2.177\,\pm\,0.025$\\
\bottomrule\end{tabular}
\end{table}

The $0.5$ counts reproduce the initial EG counts; $0.9$ reduces
iterations and every oracle family by about $44.5\%$ on both cases and
improves EG runtime, demonstrating sensitivity of the reported margin.
The follow-up uses Python 3.14.7 and NumPy 2.4.6 on the same CPU model
with one BLAS thread; the primary suite records NumPy 2.5.3. Its EG
timings are therefore compared only within this study, with no new
crossover claimed against initial-suite sampled times. That study did not
rerun sampled or assembly. The subsequent joint rerun below evaluates
all methods in one environment and supplies the main comparison.

\FloatBarrier

\subsection{Joint rerun and mechanism controls}\label{supp:sec:experiment-controls}
The joint follow-up reruns all ten data-backed cases with sampled,
assembly, both EG factors $0.5$ and $0.9$, and applicable primal AGD. It adds six
affine-recentered cases, three overlap cases, three oracle-cost cases,
and a full-evaluation ablation on the two original $2000$ endpoints.
All cases and jobs are specified before timing. Each included method has five
runs, for 435 timed runs on 22 conditions. Solvers execute sequentially
in one globally shuffled order (seed $4821$), with the same $10^{-6}$
target, zero initialization, complete-stage and ten-iteration checkpoints,
and resource limits as the initial suite. The environment is
Python 3.14.7, NumPy 2.4.6, the same CPU model, and one BLAS thread.
Before generating tables, automated checks validate successful
endpoints and the control invariants described below.
These ratios use only timings from the joint rerun.

Absolute times and counts are means $\pm$ sample SD across five runs.
Runtime ratios divide arithmetic means. Their uncertainty is the
standard error from 20,000 independent nonparametric bootstrap
resamples of the two methods' five timings (seed $4821$); seed labels
are not treated as timing pairs. These SEs describe uncertainty of a
ratio of means within this small experiment, not confidence intervals
or variability across problem draws. Deterministic repetitions assess
only timing variability. Tables retain terminal-error ranges because
different checkpoint schedules produce different overshoots; runtime
at first observed crossing is not a comparison at identical terminal
accuracy.

\paragraph{Joint comparison with stronger EG.}
Tables~\ref{tab:joint-base}--\ref{tab:joint-stress} give all ten cases.
Both EG factors use the same certified bound and fused implementation;
$0.9$ is the stronger baseline for the main table, selected after the
earlier sensitivity study. Stage parameters remain prescribed and
unchanged. The tables and figures above report the earlier $0.5$ comparison.

\begin{table}[!htbp]
\centering\footnotesize\setlength{\tabcolsep}{1.3pt}
\caption{All six base configurations rerun together with both EG factors. Calls are in thousands and times in seconds (mean $\pm$ sample SD). Terminal-error ranges retain checkpoint overshoot.}
\label{tab:joint-base}
\begin{tabular}{@{}llrrrrrl@{}}\toprule
Case & Method & $N^f$ & $N^g$ & $N_H^x$ & $N_H^y$ & Seconds & Final $e$\\\midrule
Sioux & Sampled & $4.38\,\pm\,0.89$ & $1.32\,\pm\,0.28$ & $38.07\,\pm\,8.08$ & $38.21\,\pm\,8.01$ & $0.764\,\pm\,0.153$ & $1.1e-07$--$8.7e-07$\\
 & Assembly & $5.07\,\pm\,0.58$ & $1.50\,\pm\,0.17$ & $37.62\,\pm\,4.46$ & $37.62\,\pm\,4.46$ & $0.226\,\pm\,0.027$ & $7.4e-08$--$7.2e-07$\\
 & EG 0.5 & $3.12\,\pm\,0.00$ & $3.12\,\pm\,0.00$ & $3.12\,\pm\,0.00$ & $3.12\,\pm\,0.00$ & $0.011\,\pm\,0.000$ & $7.5e-07$--$7.5e-07$\\
 & EG 0.9 & $1.68\,\pm\,0.00$ & $1.68\,\pm\,0.00$ & $1.68\,\pm\,0.00$ & $1.68\,\pm\,0.00$ & $0.006\,\pm\,0.000$ & $8.7e-07$--$8.7e-07$\\
 & Primal AGD & $0.32\,\pm\,0.00$ & $0.32\,\pm\,0.00$ & $0.32\,\pm\,0.00$ & $0.35\,\pm\,0.00$ & $0.003\,\pm\,0.000$ & $5.0e-07$--$5.0e-07$\\
Anaheim & Sampled & $4.93\,\pm\,1.02$ & $1.48\,\pm\,0.32$ & $43.46\,\pm\,8.75$ & $43.52\,\pm\,8.79$ & $0.865\,\pm\,0.171$ & $3.5e-08$--$5.8e-07$\\
 & Assembly & $4.38\,\pm\,0.79$ & $1.30\,\pm\,0.24$ & $36.79\,\pm\,6.41$ & $36.79\,\pm\,6.41$ & $0.207\,\pm\,0.036$ & $5.1e-08$--$9.5e-07$\\
 & EG 0.5 & $3.44\,\pm\,0.00$ & $3.44\,\pm\,0.00$ & $3.44\,\pm\,0.00$ & $3.44\,\pm\,0.00$ & $0.012\,\pm\,0.000$ & $7.8e-07$--$7.8e-07$\\
 & EG 0.9 & $1.92\,\pm\,0.00$ & $1.92\,\pm\,0.00$ & $1.92\,\pm\,0.00$ & $1.92\,\pm\,0.00$ & $0.007\,\pm\,0.000$ & $6.7e-07$--$6.7e-07$\\
 & Primal AGD & $0.40\,\pm\,0.00$ & $0.40\,\pm\,0.00$ & $0.40\,\pm\,0.00$ & $0.44\,\pm\,0.00$ & $0.004\,\pm\,0.000$ & $8.4e-07$--$8.4e-07$\\
Graph & Sampled & $5.43\,\pm\,1.12$ & $1.51\,\pm\,0.31$ & $22.37\,\pm\,4.61$ & $21.17\,\pm\,4.35$ & $0.618\,\pm\,0.121$ & $2.4e-08$--$7.9e-07$\\
 & Assembly & $5.21\,\pm\,0.93$ & $1.46\,\pm\,0.23$ & $27.34\,\pm\,4.85$ & $27.34\,\pm\,4.85$ & $0.305\,\pm\,0.056$ & $4.1e-08$--$6.4e-07$\\
 & EG 0.5 & $2.48\,\pm\,0.00$ & $2.48\,\pm\,0.00$ & $2.48\,\pm\,0.00$ & $2.48\,\pm\,0.00$ & $0.054\,\pm\,0.001$ & $6.8e-07$--$6.8e-07$\\
 & EG 0.9 & $1.36\,\pm\,0.00$ & $1.36\,\pm\,0.00$ & $1.36\,\pm\,0.00$ & $1.36\,\pm\,0.00$ & $0.029\,\pm\,0.000$ & $8.5e-07$--$8.5e-07$\\
 & Primal AGD & $0.28\,\pm\,0.00$ & $0.28\,\pm\,0.00$ & $0.28\,\pm\,0.00$ & $0.31\,\pm\,0.00$ & $0.008\,\pm\,0.000$ & $3.3e-07$--$3.3e-07$\\
Hub & Sampled & $5.42\,\pm\,0.99$ & $1.14\,\pm\,0.20$ & $18.90\,\pm\,3.41$ & $19.93\,\pm\,3.63$ & $0.568\,\pm\,0.106$ & $1.8e-08$--$8.3e-07$\\
 & Assembly & $6.83\,\pm\,1.32$ & $1.44\,\pm\,0.27$ & $35.06\,\pm\,6.87$ & $26.29\,\pm\,5.15$ & $0.386\,\pm\,0.074$ & $1.5e-08$--$2.1e-07$\\
 & EG 0.5 & $2.48\,\pm\,0.00$ & $1.86\,\pm\,0.00$ & $2.48\,\pm\,0.00$ & $1.86\,\pm\,0.00$ & $0.053\,\pm\,0.002$ & $7.5e-07$--$7.5e-07$\\
 & EG 0.9 & $1.36\,\pm\,0.00$ & $1.02\,\pm\,0.00$ & $1.36\,\pm\,0.00$ & $1.02\,\pm\,0.00$ & $0.029\,\pm\,0.001$ & $9.1e-07$--$9.1e-07$\\
 & Primal AGD & $0.28\,\pm\,0.00$ & $0.21\,\pm\,0.00$ & $0.28\,\pm\,0.00$ & $0.23\,\pm\,0.00$ & $0.007\,\pm\,0.000$ & $2.4e-07$--$2.4e-07$\\
NL local & Sampled & $4.62\,\pm\,0.35$ & $1.28\,\pm\,0.12$ & $4.96\,\pm\,0.34$ & $5.92\,\pm\,0.42$ & $0.360\,\pm\,0.028$ & $3.6e-08$--$9.9e-07$\\
 & Assembly & $4.45\,\pm\,0.90$ & $1.23\,\pm\,0.27$ & $11.78\,\pm\,2.39$ & $11.78\,\pm\,2.39$ & $0.352\,\pm\,0.072$ & $2.7e-08$--$8.7e-07$\\
 & EG 0.5 & $2.00\,\pm\,0.00$ & $2.00\,\pm\,0.00$ & $2.00\,\pm\,0.00$ & $2.00\,\pm\,0.00$ & $0.065\,\pm\,0.000$ & $8.8e-07$--$8.8e-07$\\
 & EG 0.9 & $1.12\,\pm\,0.00$ & $1.12\,\pm\,0.00$ & $1.12\,\pm\,0.00$ & $1.12\,\pm\,0.00$ & $0.037\,\pm\,0.002$ & $9.1e-07$--$9.1e-07$\\
NL shared & Sampled & $5.11\,\pm\,0.97$ & $1.75\,\pm\,0.33$ & $12.24\,\pm\,2.35$ & $19.01\,\pm\,3.66$ & $0.847\,\pm\,0.167$ & $3.7e-08$--$9.5e-07$\\
 & Assembly & $4.62\,\pm\,1.35$ & $1.58\,\pm\,0.45$ & $13.19\,\pm\,3.76$ & $16.49\,\pm\,4.70$ & $0.387\,\pm\,0.115$ & $1.5e-07$--$9.9e-07$\\
 & EG 0.5 & $2.08\,\pm\,0.00$ & $2.60\,\pm\,0.00$ & $2.08\,\pm\,0.00$ & $2.60\,\pm\,0.00$ & $0.069\,\pm\,0.001$ & $6.7e-07$--$6.7e-07$\\
 & EG 0.9 & $1.20\,\pm\,0.00$ & $1.50\,\pm\,0.00$ & $1.20\,\pm\,0.00$ & $1.50\,\pm\,0.00$ & $0.040\,\pm\,0.001$ & $4.8e-07$--$4.8e-07$\\
\bottomrule\end{tabular}
\end{table}

\FloatBarrier
\begin{table}[!htbp]
\centering\footnotesize\setlength{\tabcolsep}{1.3pt}
\caption{Joint loss-weight stress comparison with both EG factors. Calls are in thousands and times in seconds (mean $\pm$ sample SD). Terminal-error ranges retain checkpoint overshoot.}
\label{tab:joint-stress}
\begin{tabular}{@{}llrrrrrl@{}}\toprule
Case & Method & $N^f$ & $N^g$ & $N_H^x$ & $N_H^y$ & Seconds & Final $e$\\\midrule
Graph 200 & Sampled & $11.30\,\pm\,2.41$ & $1.59\,\pm\,0.35$ & $24.29\,\pm\,5.11$ & $22.96\,\pm\,4.81$ & $0.876\,\pm\,0.185$ & $7.2e-08$--$9.0e-07$\\
 & Assembly & $10.96\,\pm\,3.18$ & $1.59\,\pm\,0.44$ & $29.69\,\pm\,8.55$ & $29.69\,\pm\,8.55$ & $0.539\,\pm\,0.156$ & $5.7e-08$--$9.4e-07$\\
 & EG 0.5 & $16.32\,\pm\,0.00$ & $16.32\,\pm\,0.00$ & $16.32\,\pm\,0.00$ & $16.32\,\pm\,0.00$ & $0.326\,\pm\,0.006$ & $9.5e-07$--$9.5e-07$\\
 & EG 0.9 & $9.04\,\pm\,0.00$ & $9.04\,\pm\,0.00$ & $9.04\,\pm\,0.00$ & $9.04\,\pm\,0.00$ & $0.181\,\pm\,0.003$ & $9.1e-07$--$9.1e-07$\\
 & Primal AGD & $0.44\,\pm\,0.00$ & $0.44\,\pm\,0.00$ & $0.44\,\pm\,0.00$ & $0.48\,\pm\,0.00$ & $0.011\,\pm\,0.000$ & $8.6e-07$--$8.6e-07$\\
Graph 2000 & Sampled & $28.40\,\pm\,6.99$ & $1.62\,\pm\,0.40$ & $24.40\,\pm\,6.05$ & $22.91\,\pm\,5.64$ & $1.563\,\pm\,0.404$ & $4.4e-08$--$7.5e-07$\\
 & Assembly & $25.50\,\pm\,6.33$ & $1.45\,\pm\,0.34$ & $27.42\,\pm\,6.77$ & $27.42\,\pm\,6.77$ & $1.112\,\pm\,0.304$ & $3.4e-07$--$8.8e-07$\\
 & EG 0.5 & $171.44\,\pm\,0.00$ & $171.44\,\pm\,0.00$ & $171.44\,\pm\,0.00$ & $171.44\,\pm\,0.00$ & $3.344\,\pm\,0.064$ & $1.0e-06$--$1.0e-06$\\
 & EG 0.9 & $95.12\,\pm\,0.00$ & $95.12\,\pm\,0.00$ & $95.12\,\pm\,0.00$ & $95.12\,\pm\,0.00$ & $1.856\,\pm\,0.021$ & $1.0e-06$--$1.0e-06$\\
 & Primal AGD & $1.28\,\pm\,0.00$ & $1.28\,\pm\,0.00$ & $1.28\,\pm\,0.00$ & $1.41\,\pm\,0.00$ & $0.032\,\pm\,0.001$ & $7.5e-07$--$7.5e-07$\\
NL local 200 & Sampled & $10.36\,\pm\,1.87$ & $1.51\,\pm\,0.28$ & $5.78\,\pm\,1.09$ & $6.87\,\pm\,1.21$ & $0.610\,\pm\,0.110$ & $4.3e-08$--$5.4e-07$\\
 & Assembly & $11.25\,\pm\,3.13$ & $1.60\,\pm\,0.43$ & $15.27\,\pm\,4.28$ & $15.27\,\pm\,4.28$ & $0.664\,\pm\,0.186$ & $3.4e-08$--$8.1e-07$\\
 & EG 0.5 & $14.32\,\pm\,0.00$ & $14.32\,\pm\,0.00$ & $14.32\,\pm\,0.00$ & $14.32\,\pm\,0.00$ & $0.452\,\pm\,0.006$ & $9.9e-07$--$9.9e-07$\\
 & EG 0.9 & $8.00\,\pm\,0.00$ & $8.00\,\pm\,0.00$ & $8.00\,\pm\,0.00$ & $8.00\,\pm\,0.00$ & $0.250\,\pm\,0.002$ & $9.5e-07$--$9.5e-07$\\
NL local 2000 & Sampled & $27.66\,\pm\,3.23$ & $1.60\,\pm\,0.21$ & $6.08\,\pm\,0.71$ & $7.28\,\pm\,0.88$ & $1.331\,\pm\,0.177$ & $5.6e-08$--$8.3e-07$\\
 & Assembly & $29.16\,\pm\,4.48$ & $1.65\,\pm\,0.22$ & $15.60\,\pm\,2.47$ & $15.60\,\pm\,2.47$ & $1.371\,\pm\,0.219$ & $4.1e-08$--$7.3e-07$\\
 & EG 0.5 & $124.40\,\pm\,0.00$ & $124.40\,\pm\,0.00$ & $124.40\,\pm\,0.00$ & $124.40\,\pm\,0.00$ & $3.912\,\pm\,0.043$ & $1.0e-06$--$1.0e-06$\\
 & EG 0.9 & $69.12\,\pm\,0.00$ & $69.12\,\pm\,0.00$ & $69.12\,\pm\,0.00$ & $69.12\,\pm\,0.00$ & $2.163\,\pm\,0.020$ & $1.0e-06$--$1.0e-06$\\
\bottomrule\end{tabular}
\end{table}

\FloatBarrier
The stronger EG reduces the stage methods' margins: at $2000$,
sampled/EG $0.9$ runtime is $0.84\pm0.09$ on graph and
$0.62\pm0.03$ on nonlinear; assembly/EG is $0.60\pm0.07$ and
$0.63\pm0.04$ (bootstrap SE). All six base configurations still favor EG,
and AGD remains fastest on applicable bilinear cases.

\begin{table}[!htbp]
\centering\footnotesize\setlength{\tabcolsep}{3pt}
\caption{Mean individual sampled objective calls in thousands, including complete-stage work. Block bounds $1$--$3$ stay $(2,4,8)$; SDs and intermediate weights appear in Appendix~\ref{supp:sec:experiment-sweep-results}.}
\label{tab:individual-main}
\begin{tabular}{@{}lrrrrr@{}}\toprule
Case & $\kappa_4$ & $N_1^f$ & $N_2^f$ & $N_3^f$ & $N_4^f$\\\midrule
Graph & 20 & $0.66$ & $0.96$ & $1.47$ & $2.34$\\
Graph & 2000 & $0.68$ & $1.07$ & $1.55$ & $25.11$\\
Nonlin. & 20 & $0.56$ & $0.88$ & $1.23$ & $1.96$\\
Nonlin. & 2000 & $0.68$ & $1.03$ & $1.54$ & $24.41$\\
\bottomrule\end{tabular}
\end{table}

\FloatBarrier
Individual sampled objective counts show easy-block savings as block
four stiffens. Prescribed sampling frequencies explain this allocation;
they alone establish neither a convergence benefit nor the asymptotic
bounds.

\paragraph{Fixed-saddle conditioning control.}
For each graph-cycle or nonlinear-local family, let $x_\star,y_\star$
be its independent reference at $\kappa_4=20$. At each loss weight
$a$, replace the primal objective by
\[
 \widetilde f_a(x)=f_a(x)-
 \ip{\nabla f_a(x_\star)-\nabla f_{20}(x_\star)}{x}.
\]
The correction is affine and supported only on block four, so it
changes no Hessian, certified objective curvature, coupling gradient,
data, or dual quadratic. It makes the same saddle solve every case.
Each recentered reference is nevertheless recomputed by independent
Newton; the reporter verifies agreement to $10^{-8}$, equal $\mu$,
and equal coupling bounds. With $z^0=0$, initial squared $M$-distance
and absolute accuracy cutoffs now stay fixed:
$3.39\times10^{-6}$ for graph and $3.41\times10^{-6}$ for nonlinear.
This is a constructed affine control, not the original statistical
objective. It removes saddle motion and denominator drift while
preserving the loss-weight Hessian intervention.

\begin{table}[!htbp]
\centering\footnotesize\setlength{\tabcolsep}{1.3pt}
\caption{Affine-recentered control: fixed saddle, metric, initial distance and coupling within each family. Calls are in thousands and times in seconds (mean $\pm$ sample SD). Terminal-error ranges retain checkpoint overshoot.}
\label{tab:fixed-saddle}
\begin{tabular}{@{}llrrrrrl@{}}\toprule
Case & Method & $N^f$ & $N^g$ & $N_H^x$ & $N_H^y$ & Seconds & Final $e$\\\midrule
Graph 20 & Sampled & $5.43\,\pm\,1.12$ & $1.51\,\pm\,0.31$ & $22.37\,\pm\,4.61$ & $21.17\,\pm\,4.35$ & $0.634\,\pm\,0.135$ & $2.4e-08$--$7.9e-07$\\
 & Assembly & $5.21\,\pm\,0.93$ & $1.46\,\pm\,0.23$ & $27.34\,\pm\,4.85$ & $27.34\,\pm\,4.85$ & $0.320\,\pm\,0.055$ & $4.1e-08$--$6.4e-07$\\
 & EG 0.9 & $1.36\,\pm\,0.00$ & $1.36\,\pm\,0.00$ & $1.36\,\pm\,0.00$ & $1.36\,\pm\,0.00$ & $0.030\,\pm\,0.000$ & $8.5e-07$--$8.5e-07$\\
 & Primal AGD & $0.28\,\pm\,0.00$ & $0.28\,\pm\,0.00$ & $0.28\,\pm\,0.00$ & $0.31\,\pm\,0.00$ & $0.008\,\pm\,0.000$ & $3.3e-07$--$3.3e-07$\\
Graph 200 & Sampled & $12.17\,\pm\,2.69$ & $1.72\,\pm\,0.39$ & $26.15\,\pm\,5.58$ & $24.72\,\pm\,5.34$ & $0.992\,\pm\,0.224$ & $2.3e-08$--$2.6e-07$\\
 & Assembly & $11.25\,\pm\,3.76$ & $1.63\,\pm\,0.52$ & $30.47\,\pm\,10.10$ & $30.47\,\pm\,10.10$ & $0.596\,\pm\,0.202$ & $7.2e-08$--$8.7e-07$\\
 & EG 0.9 & $8.32\,\pm\,0.00$ & $8.32\,\pm\,0.00$ & $8.32\,\pm\,0.00$ & $8.32\,\pm\,0.00$ & $0.181\,\pm\,0.003$ & $1.0e-06$--$1.0e-06$\\
 & Primal AGD & $0.44\,\pm\,0.00$ & $0.44\,\pm\,0.00$ & $0.44\,\pm\,0.00$ & $0.48\,\pm\,0.00$ & $0.012\,\pm\,0.000$ & $3.6e-07$--$3.6e-07$\\
Graph 2000 & Sampled & $29.13\,\pm\,6.78$ & $1.66\,\pm\,0.39$ & $25.03\,\pm\,5.90$ & $23.49\,\pm\,5.48$ & $1.740\,\pm\,0.422$ & $6.5e-08$--$5.2e-07$\\
 & Assembly & $26.95\,\pm\,5.55$ & $1.54\,\pm\,0.30$ & $28.98\,\pm\,5.90$ & $28.98\,\pm\,5.90$ & $1.297\,\pm\,0.254$ & $1.2e-07$--$9.3e-07$\\
 & EG 0.9 & $98.40\,\pm\,0.00$ & $98.40\,\pm\,0.00$ & $98.40\,\pm\,0.00$ & $98.40\,\pm\,0.00$ & $2.112\,\pm\,0.010$ & $9.9e-07$--$9.9e-07$\\
 & Primal AGD & $1.24\,\pm\,0.00$ & $1.24\,\pm\,0.00$ & $1.24\,\pm\,0.00$ & $1.36\,\pm\,0.00$ & $0.034\,\pm\,0.000$ & $8.4e-07$--$8.4e-07$\\
NL 20 & Sampled & $4.62\,\pm\,0.35$ & $1.28\,\pm\,0.12$ & $4.96\,\pm\,0.34$ & $5.92\,\pm\,0.42$ & $0.370\,\pm\,0.030$ & $3.6e-08$--$9.9e-07$\\
 & Assembly & $4.45\,\pm\,0.90$ & $1.23\,\pm\,0.27$ & $11.78\,\pm\,2.39$ & $11.78\,\pm\,2.39$ & $0.361\,\pm\,0.076$ & $2.7e-08$--$8.7e-07$\\
 & EG 0.9 & $1.12\,\pm\,0.00$ & $1.12\,\pm\,0.00$ & $1.12\,\pm\,0.00$ & $1.12\,\pm\,0.00$ & $0.037\,\pm\,0.001$ & $9.1e-07$--$9.1e-07$\\
NL 200 & Sampled & $10.07\,\pm\,1.74$ & $1.48\,\pm\,0.26$ & $5.61\,\pm\,1.03$ & $6.68\,\pm\,1.14$ & $0.626\,\pm\,0.107$ & $2.2e-08$--$6.6e-07$\\
 & Assembly & $10.68\,\pm\,2.62$ & $1.51\,\pm\,0.36$ & $14.50\,\pm\,3.61$ & $14.50\,\pm\,3.61$ & $0.659\,\pm\,0.164$ & $2.9e-08$--$9.9e-07$\\
 & EG 0.9 & $8.72\,\pm\,0.00$ & $8.72\,\pm\,0.00$ & $8.72\,\pm\,0.00$ & $8.72\,\pm\,0.00$ & $0.284\,\pm\,0.004$ & $9.9e-07$--$9.9e-07$\\
NL 2000 & Sampled & $26.93\,\pm\,1.95$ & $1.56\,\pm\,0.15$ & $5.92\,\pm\,0.45$ & $7.08\,\pm\,0.52$ & $1.394\,\pm\,0.096$ & $7.8e-08$--$6.6e-07$\\
 & Assembly & $26.97\,\pm\,4.18$ & $1.54\,\pm\,0.22$ & $14.42\,\pm\,2.22$ & $14.42\,\pm\,2.22$ & $1.374\,\pm\,0.208$ & $6.7e-08$--$4.5e-07$\\
 & EG 0.9 & $85.12\,\pm\,0.00$ & $85.12\,\pm\,0.00$ & $85.12\,\pm\,0.00$ & $85.12\,\pm\,0.00$ & $2.788\,\pm\,0.027$ & $1.0e-06$--$1.0e-06$\\
\bottomrule\end{tabular}
\end{table}

\FloatBarrier
With the cutoff fixed, the $2000$ sampled/EG ratios are
$0.82\pm0.08$ on graph and $0.50\pm0.01$ on nonlinear; assembly/EG
ratios are $0.61\pm0.05$ and $0.49\pm0.03$. At $20$ and $200$,
EG is faster. The crossover therefore persists in this control;
the finite results do not establish a universal threshold or rate.

\paragraph{Fixed-dimension overlap control.}
Use four two-dimensional primal blocks and four two-dimensional dual
blocks, $M=I$, diagonal objective Hessians with spectra $(1,\kappa_i)$, primal bounds $(2,4,8,20)$,
and unit dual bounds. Set $z_\star=\boldsymbol 1/4$ in dimension 16
and $z^0=0$, giving initial squared distance one. Let
\[
 C_\Delta=2\bigl(I_{4/\Delta}\otimes U_\Delta\bigr)\otimes I_2,
 \qquad \Delta\in\{1,2,4\},
\]
where $U_\Delta$ is the normalized Sylvester Hadamard matrix.
Choose affine objective terms and $b=C_\Delta x_\star$ so the operator is
$A(z-z_\star)+T_{H,\Delta}(z-z_\star)$.
Every coupling block row has exactly $\Delta$ inputs and every input
occurs in $\Delta$ rows. Dimension, saddle, initial distance, objective
Hessians, true coupling operator norm $2$, and certificate row norms
$2$ stay fixed. Sampling's row variation coefficients therefore
change only through $\sqrt\Delta$, testing its explicit overlap
penalty. The nonnegative block-bound matrix has norm $2\sqrt\Delta$,
so the certified assembly and EG aggregate bounds also change; their
parameters are recomputed and retained in the records. The control varies coupling
orientation; it is not an identical-operator comparison.

\begin{table}[!htbp]
\centering\small\setlength{\tabcolsep}{3pt}
\caption{Overlap control on fixed $16$-dimensional quadratic saddles. True coupling norm and row certificate norms are $2$; $\Delta$ changes through orthogonal support groups. Entries are means $\pm$ sample SD; coupling calls in thousands. $S$ and $K$ are the prescribed stage parameters, not fitted convergence rates.}
\label{tab:overlap-control}
\begin{tabular}{@{}rlrrrrrr@{}}\toprule
$\Delta$ & Method & $S$ & $K$ & Stages & $N_H^x/10^3$ & $N_H^y/10^3$ & Seconds\\\midrule
1 & Sampled & 71.4 & 1286 & $6.4\,\pm\,1.1$ & $5.24\,\pm\,0.92$ & $5.24\,\pm\,0.93$ & $0.177\,\pm\,0.031$\\
1 & Assembly & 31.8 & 573 & $7.6\,\pm\,0.9$ & $6.28\,\pm\,0.76$ & $6.28\,\pm\,0.76$ & $0.120\,\pm\,0.014$\\
1 & EG 0.9 & -- & -- & -- & $1.20\,\pm\,0.00$ & $1.20\,\pm\,0.00$ & $0.0031\,\pm\,0.0001$\\
2 & Sampled & 90.2 & 1623 & $7.2\,\pm\,1.9$ & $8.34\,\pm\,2.28$ & $8.33\,\pm\,2.18$ & $0.237\,\pm\,0.062$\\
2 & Assembly & 34.2 & 615 & $7.0\,\pm\,0.7$ & $8.20\,\pm\,0.76$ & $8.20\,\pm\,0.76$ & $0.118\,\pm\,0.011$\\
2 & EG 0.9 & -- & -- & -- & $1.28\,\pm\,0.00$ & $1.28\,\pm\,0.00$ & $0.0033\,\pm\,0.0001$\\
4 & Sampled & 116.7 & 2100 & $6.6\,\pm\,0.9$ & $10.76\,\pm\,1.53$ & $10.77\,\pm\,1.45$ & $0.276\,\pm\,0.046$\\
4 & Assembly & 37.5 & 675 & $6.0\,\pm\,0.7$ & $9.90\,\pm\,1.12$ & $9.90\,\pm\,1.12$ & $0.113\,\pm\,0.012$\\
4 & EG 0.9 & -- & -- & -- & $1.36\,\pm\,0.00$ & $1.36\,\pm\,0.00$ & $0.0035\,\pm\,0.0001$\\
\bottomrule\end{tabular}
\end{table}

\FloatBarrier
From $\Delta=1$ to $4$, sampled $S$ rises from $71.4$ to $116.7$,
$K$ from $1286$ to $2100$, and primal coupling calls from
$5.24\pm0.92$ to $10.76\pm1.53$ thousand. Mean stopping stages
stay near six to seven. This demonstrates an observed work penalty
consistent with the prescribed overlap dependence, not its optimality.
EG remains faster at every overlap level.

\paragraph{Controlled unequal oracle costs.}
On the same quadratic geometry, take disjoint edges
$C=\diag(8,1,1,1)\otimes I_2$ and preserve saddle and initial distance.
For each weak-edge coupling row, repeat exact gradient arithmetic
$r\in\{1,4,16\}$ times before returning one identical partial.
Strong-edge and objective rows have unit cost. Full coupling requests
concatenate the same partial backend at every cost level, so no method
bypasses the intervention. Repetitions are internal arithmetic, not
additional requested oracle returns. Nominal weighted work is
\[
 W=\sum_iN_i^f+\sum_jN_j^g+
   \sum_{k=1}^8\tau_kN_{H,k},\qquad
 (\tau_1,\ldots,\tau_8)=(1,r,r,r,1,r,r,r).
\]
Cost multipliers model relative arithmetic work and exclude Python
overhead; they are not measured per-oracle time estimates. Gradients,
certificates, probabilities, stopping stages, and iterates stay fixed
across $r$ for each seed and method. This tests how separate counters
translate into work under controlled costs, without claiming that
the cost-blind allocation adapts to costs or that $W$ equals runtime.

\begin{table}[!htbp]
\centering\small\setlength{\tabcolsep}{3pt}
\caption{Controlled unequal coupling costs: weak-edge oracle arithmetic repeated $r$ times, returning the identical partial. Full calls concatenate those same partials at every cost level. $W$ charges unit objective costs and coupling costs $(1,r,r,r,1,r,r,r)$. Means $\pm$ sample SD; time ratio $\pm$ bootstrap SE. This backend is a constructed cost model.}
\label{tab:cost-control}
\begin{tabular}{@{}rlrrr@{}}\toprule
$r$ & Method & $W/10^3$ & Seconds & Time/EG 0.9\\\midrule
1 & Sampled & $20.47\,\pm\,4.85$ & $0.213\,\pm\,0.054$ & $26.61\,\pm\,2.73$\\
1 & Assembly & $53.91\,\pm\,9.71$ & $0.225\,\pm\,0.041$ & $28.11\,\pm\,2.06$\\
1 & EG 0.9 & $6.72\,\pm\,0.00$ & $0.0080\,\pm\,0.0002$ & $1.00\,\pm\,0.00$\\
4 & Sampled & $32.37\,\pm\,7.77$ & $0.228\,\pm\,0.053$ & $13.07\,\pm\,1.22$\\
4 & Assembly & $159.83\,\pm\,28.74$ & $0.359\,\pm\,0.064$ & $20.59\,\pm\,1.49$\\
4 & EG 0.9 & $14.28\,\pm\,0.00$ & $0.0174\,\pm\,0.0004$ & $1.00\,\pm\,0.00$\\
16 & Sampled & $79.99\,\pm\,19.46$ & $0.296\,\pm\,0.084$ & $5.44\,\pm\,0.62$\\
16 & Assembly & $583.51\,\pm\,104.87$ & $0.875\,\pm\,0.151$ & $16.08\,\pm\,1.13$\\
16 & EG 0.9 & $44.52\,\pm\,0.00$ & $0.0544\,\pm\,0.0012$ & $1.00\,\pm\,0.00$\\
\bottomrule\end{tabular}
\end{table}

\FloatBarrier
As $r$ rises from $1$ to $16$, sampled/assembly weighted work falls
from $0.38$ to $0.14$, and their mean runtime ratio from $0.95$ to
$0.34$. Thus fewer weak-row requests have a practical benefit in
this controlled backend. EG still has lower work and runtime than
either stage method, including at $r=16$.

\paragraph{Isolating evaluation of coupling rows.}
The full/discard ablation retains the sampled method's components,
probabilities, $S,\eta,K$, random draws, cache updates, and selected
row returns. Whenever it requests one coupling row, it evaluates
the fused full coupling and discards all unselected rows; every
computed row is charged. This isolates the evaluation saving without
changing the convergence mechanism or the stage allocation. For all
ten paired trajectories at the $2000$ endpoints, the reporter verifies
equal stopping stages, equal objective counts, and checkpoint iterates
agreeing to floating-point tolerance. Assembly remains a distinct
algorithm and is reported alongside both backends.

\begin{table}[!htbp]
\centering\small\setlength{\tabcolsep}{3pt}
\caption{Coupling-row ablation at $\kappa_4=2000$. Full/discard uses the same sampled components, draws, parameters, updates and stopping stages, but evaluates the full coupling and discards unselected rows. Every returned row is charged. Assembly is the separate stage alternative. Means $\pm$ sample SD; sampled/full time ratio $\pm$ bootstrap SE.}
\label{tab:row-ablation}
\begin{tabular}{@{}llrrrr@{}}\toprule
Case & Backend & $N_H^x/10^3$ & $N_H^y/10^3$ & Seconds & Sampled/full\\\midrule
Graph & Sampled & $24.40\,\pm\,6.05$ & $22.91\,\pm\,5.64$ & $1.563\,\pm\,0.404$ & $0.86\,\pm\,0.13$\\
Graph & Full/discard & $189.24\,\pm\,46.74$ & $189.24\,\pm\,46.74$ & $1.822\,\pm\,0.466$ & --\\
Graph & Assembly & $27.42\,\pm\,6.77$ & $27.42\,\pm\,6.77$ & $1.112\,\pm\,0.304$ & --\\
Nonlinear & Sampled & $6.08\,\pm\,0.71$ & $7.28\,\pm\,0.88$ & $1.331\,\pm\,0.177$ & $0.70\,\pm\,0.05$\\
Nonlinear & Full/discard & $53.42\,\pm\,6.33$ & $53.42\,\pm\,6.33$ & $1.889\,\pm\,0.213$ & --\\
Nonlinear & Assembly & $15.60\,\pm\,2.47$ & $15.60\,\pm\,2.47$ & $1.371\,\pm\,0.219$ & --\\
\bottomrule\end{tabular}
\end{table}

\FloatBarrier
Sampled/full-discard time ratios are $0.86\pm0.13$ on graph and
$0.70\pm0.05$ on nonlinear. The nonlinear result supports a runtime
benefit from evaluating only selected rows at unchanged iterates.
The graph timing difference is uncertain with five runs. Neither
ablation establishes superior convergence from row sampling, and
assembly remains faster on graph ($1.112\pm0.304$ versus
$1.563\pm0.404$ seconds); the nonlinear stage times are close relative
to their run variability.

\end{document}